\renewcommand{\epsilon}{\varepsilon}
\newcommand{\euler}{\mathrm{e}}
\newcommand{\Eins}{\mathbf{1}}
\newcommand{\drm}{\mathrm{d}}
\newcommand{\RR}{\mathbb{R}}
\newcommand{\CC}{\mathbb{C}}
\newcommand{\NN}{\mathbb{N}}	
\newcommand{\ZZ}{\mathbb{Z}}
\newcommand{\cB}{\mathcal{B}}
\newcommand{\cF}{\mathcal{F}}
\newcommand{\cH}{\mathcal{H}}
\newcommand{\cL}{\mathcal{L}}
\newcommand{\cX}{{\mathcal{X}}}
\newcommand{\cY}{{\mathcal{Y}}}
\newcommand{\cZ}{{\mathcal{Z}}}
\newcommand{\supp}{\operatorname{supp}}
\newcommand{\ran}{\operatorname{Ran}}
\newcommand{\Ker}{\operatorname{Ker}}
\newcommand{\obs}{\mathrm{obs}}
\newcommand{\const}{C_{\mathrm{obs}}}
\newcommand{\cost}{C_T}
\newcommand{\si}{{\mathrm{si}}}
\newcommand{\esssup}{\operatornamewithlimits{ess\,sup}}
\newcommand{\infspec}{\kappa}
\newtheorem{theorem}{Theorem}[section]
\newtheorem{lemma}[theorem]{Lemma}
\newtheorem{corollary}[theorem]{Corollary}
\theoremstyle{definition}
\newtheorem{definition}[theorem]{Definition}
\newtheorem{example}[theorem]{Example}
\theoremstyle{remark}
\newtheorem{remark}[theorem]{Remark}
 	\definecolor{darkred}{rgb}{0.5,0,0}
 	\definecolor{darkgreen}{rgb}{0,0.5,0}
 	\definecolor{darkblue}{rgb}{0,0,0.5}  	\hypersetup{colorlinks,linkcolor=darkblue,filecolor=darkgreen,urlcolor=darkred,citecolor=darkblue}
\begin{document}
%
%
%
%
\title{Sharp estimates and homogenization of the control cost of the heat equation on large domains}
\author{Ivica Naki\'c}
\affil{University of Zagreb, Faculty of Science, Department of Mathematics, Zagreb, Croatia}
\author{Matthias T\"aufer}
\affil{Queen Mary University of London, School of Mathematical Sciences, London, United Kingdom}
\author{Martin Tautenhahn}
\affil{Technische Universit\"at Chemnitz, Fakult\"at f\"ur Mathematik, Chemnitz, Germany}
\author{Ivan Veseli\'c}
\affil{Technische Universit\"at Dortmund, Fakult\"at f\"ur Mathematik, Dortmund, Germany}
\date{\vspace{-3em}}
\maketitle
\begin{abstract}
We prove new bounds on the control cost for the abstract heat equation,
assuming a spectral inequality or uncertainty relation for spectral projectors.
In particular, we specify quantitatively  how upper bounds on the control cost depend on the constants in the spectral inequality.
This is then applied to the heat flow on bounded and unbounded domains modeled by a Schr\"odinger semigroup.
This means that the heat evolution generator is allowed to contain a potential term.
The observability/control set is assumed to obey an equidistribution or a thickness condition, depending on the context.
Complementary lower bounds and examples show that our control cost estimates are sharp in certain asymptotic regimes.
One of these is dubbed homogenization regime and corresponds to the situation
where the control set becomes more and more evenly distributed throughout the domain while its density remains constant.
\\[1ex]
\textsf{\textbf{Mathematics Subject Classification (2010).}} 35Q93, 35R15, 35K05, 93C20, 93B05
\\[1ex]
\textsf{\textbf{Keywords.}} observability, null-controllability,  spectral inequality, abstract heat equation, control cost, thick sets, homogenization, Schr\"odinger semigroup
\end{abstract}
\section{Introduction}
Let us start by describing the most important example which has motivated our study of control cost estimates for the heat equation.
Consider the inhomogeneous heat equation with heat generation term $ - V$ on suitable domains $\Omega \subset \RR^d$ (in particular, $\Omega = \RR^d$ is allowed as well) given by
\begin{equation}
 \label{eq:heat_equation_introduction}
 \dot w + (- \Delta + V) w = \Eins_{S} u,
 \quad
 w(0) = w_0 \in L^2(\Omega) ,
\end{equation}
where $w,u \in L^2([0,T] \times \Omega)$, $V \in L^\infty(\Omega)$, and where the \emph{control set} $S \subset \Omega$ is measurable with a positive measure.
Hence the influence of the \emph{control function $u$} is restricted to the set $S$.
The system \eqref{eq:heat_equation_introduction} is \emph{null-controllable} if for every $w_0 \in L^2(\Omega)$ there exists a \emph{control function} $u = u_{w_0} \in L^2([0,T] \times \Omega)$ such that the solution of~\eqref{eq:heat_equation_introduction} satisfies $w(T) = 0$, cf.~\cite{TucsnakW-09}.
In this case, the control cost in time $T$ is the least constant $C_T$ such that
 \[
 \lVert u_{w_0} \rVert_{L^2([0,T] \times \Omega)} \leq C_T \lVert w_0 \rVert_{L^2(\Omega)}
 \]
holds for all $w_0 \in L^2(\Omega)$.
\par
The aim of this paper is to investigate sharp upper and lower bounds on the control cost in time $T > 0$ of the controlled heat equation \eqref{eq:heat_equation_introduction},
in particular, its dependence on the geometry of $S$.
This is a natural aim since it has been shown recently that in the case $\Omega=\RR^d$,
if the system is null controllable,
$S$ \emph{necessarily} has to satisfy certain geometric conditions, \cite{WangWZZ-17-arxiv,EgidiV-18}.
\par
We are able to establish the optimality of our bounds in certain asymptotic regimes.
A particularly appealing geometric regime is the homogenization scenario, in  which the control set $S \subset \Omega$ becomes more and more evenly distributed over $\Omega$ while keeping an overall lower bound on the relative density.
This corresponds to reducing local fluctuations in the density of the control set $S$. In such a homogenization regime we study the asymptotic behavior of the upper bound of the control cost.
\par
Note that in the context of control theory homogenization scenarios have been studied before, see e.g.~\cite{Zuazua-94,LopezZ-02}.
There however, as in classical homogenization theory, it is the differential operator generating the semigroup which is being homogenized,
rather than the observability set.
\par
So far, much more attention has been devoted to identifying the dependence of the control cost on the time parameter than to its geometric counterparts.
In \cite{Seidman-84} Seidman proved that for one-dimensional controlled heat systems the control cost in small time regime blows up at most exponentially.
This result was extended to arbitrary dimension by Fursikov and Imanuilov in \cite{FursikovI-96}. That the exponential blowup indeed occurs was established by G\"uichal \cite{Guichal-85} for one-dimensional systems and by Miller \cite{Miller-04} in the general case. Since then the bounds on the control cost have received a lot of attention
\cite{FernandezZ-00,Miller-04b,Phung-04,Miller-06b,Miller-06,MicuZ-06,TenenbaumT-07,DuyckaertsZZ-08,Miller-10,TenenbaumT-11,ErvedozaZ-11,LeRousseauL-12,Lissy-12,Lissy-15,BardosP-17,DardeE-19,EgidiV-18,LaurentL-18-arxiv,Phung-18}.
Most of the results were obtained for bounded domains, but recently unbounded domains also became a focus of interest
\cite{BeauchardP-18,EgidiV-17-arxiv,WangWZZ-17-arxiv,EgidiV-18,Egidi-18,EgidiNSTTV-20}.
Note that there has been also interest in
observability estimates if the measurement occurs only during a positive-measure subset of the time interval, see for instance \cite{WangZ-17}.
\par
The most common way to obtain a bound on the control cost is a \emph{final-state-observability estimate}
(an estimate concerning the dynamics of the corresponding adjoint system)
which in our context states that for all $T > 0$ there is a  $T$-dependent constant $C_\obs$ such that
\[
\lVert \euler^{(\Delta - V)T}\phi \rVert_{L^2(\Omega)}^2 \le C_\obs^2 \int_0^T \lVert \mathrm{e}^{(\Delta - V)t} \phi \rVert_{L^2(S)}^2 \drm t \quad \text{for all } \phi \in L^2(\Omega).
\]
Here $t \mapsto \euler^{(\Delta - V)t}$ denotes the $C_0$-semigroup generated by $\Delta - V$.
The duality between null-controllability and final-state-observability implies that $C_\obs$ is an upper bound for the control cost.
In the seminal papers \cite{LebeauR-95,LebeauZ-98,JerisonL-99} it has been
shown that one way to establish observability estimates
is to prove a spectral inequality (a particular type of uncertainty relation)
\[
\lVert \phi \rVert_{L^2(\Omega)} \le C \mathrm{e}^{c \sqrt{\lambda - \infspec}} \lVert \phi \rVert_{L^2(S)} \quad \text{for all } \lambda > \infspec \text{ and } \phi \in \ran P_{ - \Delta + V}(\lambda),
\]
where $C,c>0$ are constants, $P_{ - \Delta + V}(\lambda)$ is the  projector to the spectral subspace of $ - \Delta + V$ below $\lambda$, and $\infspec$ is the minimal spectral point of $ - \Delta + V$.
This is a particularly attractive technique since the spectral inequality does not involve the time variable, i.e.\ it concerns only the corresponding stationary system.
Consequently, quite a number of works developed abstract theorems to derive bounds on the control cost from spectral inequalities, each tailored for certain applications in mind.
Among them are \cite{Miller-10,TenenbaumT-11,BeauchardP-18}, which are also most closely related to our present paper.
In spite of the variety of such earlier results, none of them is sufficient for our purposes,
namely to provide sharp bounds on the control cost in several asymptotic scenarios
of interest to us.
\par
Hence, the first step to analyze heat control problems as described at the beginning of this section
was to establish null-controllability of an abstract parabolic system from a suitable spectral inequality,
together with an upper bound
on the control cost. This is spelled out in  Theorem~\ref{thm:obs_and_control}
whose proof
is inspired by the direct approach of~\cite{TenenbaumT-11}, since it turned out to be the one
which can be best generalized and optimized for the geometric situations we had in mind.
A simplified version of  Theorem~\ref{thm:obs_and_control}
states that if a non-negative operator $A$ on a Hilbert space $X$ satisfies
for all $\phi \in X$  and for all $\lambda > 0$
\begin{equation*}
  \lVert P_A (\lambda) \phi \rVert_X^2
  \leq
  d_0 \euler^{d_1 \lambda^\gamma}
  \lVert B^\ast P_A (\lambda) \phi \rVert_U^2
 \end{equation*}
where $d_0 > 0$ , $d_1 \geq 0$ and $\gamma \in (0,1)$ are constants and $B\colon U \to X$ a bounded operator between Hilbert spaces, then for all $T > 0$ and all $\phi \in X$ we have the observability estimate
\begin{equation*}
 \bigl\lVert \euler^{-AT} \phi \bigr\rVert_X^2
 \leq
 \const^2
 \int_0^T  \bigl\lVert B^* \euler^{-At} \phi \bigr\rVert_U^2 \drm t ,
\end{equation*}
with
\[
  \const^2
  =
  \frac{C_1 d_0}{T}
  K_1^{C_2}
  \exp \left(  C_3
   \left( \frac{d_1}{T^\gamma} \right)^{\frac{1}{1 - \gamma}} \right)
     \quad \text{ and } \quad K_1 = 2 d_0  \lVert B \rVert_{\cL (U, X)}^2 + 1 .
\]
Theorem~\ref{cor:upper_bounds_control_cost} gives an extension of this result to the case where $A$ is merely lower semi-bounded, but not necessarily non-negative.
It is complemented by lower bounds, spelled out in Theorem~\ref{thm:lower_bound_control_cost},
and Table~\ref{table:asymptotics} comparing upper and lower bound is several asymptotic scenarios.
\par
 On the quantitative level, the key improvement over the existing results is  the dependence of our control cost estimate on the parameters coming from the spectral inequality, cf.\ Remark~\ref{rem:main_theorem}.
 While Theorems~\ref{thm:obs_and_control} and \ref{cor:upper_bounds_control_cost} also cover the case when the control operator $B$ is not bounded, hence enabling its use in the case of boundary control,
 we do not pursue this question in the present paper.
Let us also note that our paper has inspired a follow up work in the more general framework of control theory in Banach spaces~\cite{GallaunST}.
\par
 Since our abstract theorem reduces the control cost estimate to a spectral inequality, it is also paramount for these spectral inequalities to have an explicit and -- if possible -- optimal dependence on parameters of interest.
 Recently, spectral inequalities with explicit geometry dependence on bounded and unbounded domains have been proved: in \cite{EgidiV-18,EgidiV-17-arxiv} for the free heat equation controlled by a thick set, and in \cite{NakicTTV-18+} for the heat equation with potential with  control supported on an equidistributed set.
 We combine these two spectral inequalities with our abstract theorem and obtain control cost estimates for the heat equation which are valid in a large class of (bounded and unbounded) domains
 and which depend on the control set $S$ via its geometric parameters.
 Our bounds are uniform in the heat generation term $V$ (they only depend on $\lVert V \rVert_{\infty}$) and are also uniform in $\Omega$ and $S$ in a certain sense.
 The obtained estimates are much more explicit than what existed before and, together with the uniformity in $\Omega$ and $S$ allow for the first time to study homogenization and de-homogenization limits.
\par
The paper is divided in three parts.
The core of Section~\ref{sec:abstract} are two theorems which spell out that spectral inequalities imply observability estimates.
The first one, Theorem~\ref{thm:obs_and_control}, concerns non-negative operators, the second one, Theorem~\ref{cor:upper_bounds_control_cost}, general
lower semi-bounded operators.
In the case of lower semi-bounded operators, the long time asymptotics of control cost depends on the growth bound of the corresponding semigroup.
In order to better understand the upper bounds proven in Theorems~\ref{thm:obs_and_control} and \ref{cor:upper_bounds_control_cost},
we compare them to lower bounds on the control of the heat equation for abstract systems and prove their sharpness in Theorem~\ref{thm:lower_bound_control_cost}.
In this section we also provide a thorough discussion of the lower and upper bounds of abstract control systems in Remark~\ref{rem:table}
and Table~\ref{table:asymptotics}. The following Section~\ref{sec:proofs} contains the proofs of Theorems~\ref{thm:obs_and_control} and  \ref{cor:upper_bounds_control_cost}
and a technical lemma, completing the first part of the paper.
\par
In Section~\ref{sec:applications} we then turn to system~\eqref{eq:heat_equation_introduction} and combine Theorem~\ref{cor:upper_bounds_control_cost} with the spectral inequalities from \cite{EgidiV-18} and \cite{NakicTTV-18+} to obtain bounds on the control cost for the free heat equation and free fractional heat equation controlled by a thick set, and a heat equation with a generation term controlled by a equidistributed set.
\par
Finally, Section~\ref{sec:homogenization} is devoted to studying homogenization and de-homogenization of the control cost.
\section{Abstract observability and null-controllability}
\label{sec:abstract}
For normed spaces $V$ and $W$ we denote by $\cL (V , W)$
the space of bounded linear operators from $V$ to $W$.
Let $X$ and $U$ be Hilbert spaces with inner products $\langle \cdot , \cdot \rangle$ and $\langle \cdot , \cdot \rangle_U$ and norms $\lVert \cdot \rVert$ and $\lVert \cdot \rVert_U$, respectively.
Let $A$ be a lower semibounded self-adjoint operator in $X$ with domain $\mathcal{D}(A)$.
We define $\infspec = \min \sigma(A)$ and denote by $\{P_H (\lambda) \colon \lambda \in \RR\}$ the resolution of identity of a self-adjoint operator $H$.
Next we will introduce the structure of a rigged Hilbert space or a Gelfand triple.
Let $\beta \in \RR$. On $X$ we define the scalar product and norm
\begin{equation}\label{eq:scalar}
 \langle x,y \rangle_{\beta}
 :=
 \bigl\langle (I + A^2)^{\beta / 2} x , (I + A^2)^{\beta / 2} y \bigr\rangle, \quad
 \Vert x\Vert_\beta:=\Vert (I + A^2)^{\beta / 2}x\Vert .
\end{equation}
For $\beta >0$ we denote by $X_{\beta}:=\mathcal{D}(I + A^2)^{\beta/2}$
the subset  of $X$ consisting of all elements $x$ with finite $\Vert x\Vert_\beta$ norm.
If we equip $X_\beta$ with the scalar product \eqref{eq:scalar} we obtain a Hilbert space, usually called the interpolation space.
For $\beta \leq 0$ we denote by $X_{\beta}\supset X$ the Hilbert space obtained as the completion of $X$ with respect to the norm $\Vert \cdot \Vert_\beta$ in \eqref{eq:scalar}, usually called the extrapolation space.
Note that $X_{-\beta}$ is the dual space of $X_{\beta}$ with respect to the pivot space $X$.
>From now on we assume that $\beta \le 0$ and $B \in \cL (U , X_{\beta})$.
Clearly, $X_0 = X$ and the case $\beta = 0$ is of particular interest for our applications in Sections~\ref{sec:applications} and~\ref{sec:homogenization}.
\par
For $T > 0$, we study the abstract inhomogeneous Cauchy problem
\begin{equation}
 \label{eq:abstract_control_system}
 \dot{w} + Aw = Bu,
 \quad
 w(0)=w_0 \in X ,
\end{equation}
where $w \in L^2([0,T], X)$ and $u \in L^2([0,T], U)$. The function $u$ is called \emph{control function}.
The mild solution of \eqref{eq:abstract_control_system} is given by
\begin{equation}
\label{eq:Duhamel_formula}
 w(t)
 =
 \euler^{- A t} w_0
 +
 \int_0^t \euler^{- A (t-s)} B u(s) \drm s,
 \quad
 t \in [0, T].
\end{equation}
Note that since $\ran B \subset X_\beta$, we need to give a meaning to the term $\euler^{-A(t - s)} B u(s)$.
For this purpose we introduce the semigroup  $S(t)$ in $X$ with generator $- A$ and use the symbol $\euler^{- A \cdot}$ for the unique extension of $S(t)$ to the space $X_\beta$.
More precisely, let $U_\beta \in \cL(X,X_\beta)$ be the isometric operator given as the unique extension of $(I + A^2)^{-\beta/2} \in \cL (X_{- \beta}, X)$.
Then we have $\euler^{- A t} = U_\beta S(t) {U_{\beta}}^{-1}$.
\par
\begin{remark} \label{lem:w_in_X}
Although we do not assume that $B$ is an admissible control operator (for the definition, see, for example \cite{TucsnakW-09}), we still have that
\begin{equation*}
\text{for all}\ T >0 \ t\in [0,T] \ \text{the function defined in \eqref{eq:Duhamel_formula} satisfies} \ w(t) \in X.
\end{equation*}
This follows from
\[
 \euler^{- A t} X_\beta
 =
 U_\beta S(t) {U_{\beta}}^{-1} X_\beta
 =
 U_\beta S(t) X
 =
 U_\beta \mathcal{D}( A^\infty )
 \subset
 U_\beta X_{- \beta}
 =
 X
\]
for all $t > 0$ where $\mathcal{D}(A^\infty) = \bigcap_{n \in \NN} \mathcal{D}(A^n)$.
Here, the equality $U_\beta S(t) X = U_\beta \mathcal{D}( A^\infty )$ follows from the fact that $S(t)$ is an analytic semigroup, cf.\ \cite[Chapter IX.1.6]{Kato-95}.
This shows $\euler^{-A (t - s)} B u(s) \in X$ for all $s \in [0,t)$ which implies $w(t) \in X$.
\end{remark}
We now introduce two concepts, null-controllability and final-state-observability.
\begin{definition}
 The system~\eqref{eq:abstract_control_system} is \emph{null-controllable in time $T > 0$} if for every $w_0 \in X$ there exists a control function $u = u_{w_0} \in L^2([0,T], U)$ such that the solution \eqref{eq:Duhamel_formula} satisfies $w(T) = 0$. We call such a control function \emph{null-control function in time $T$}.
The \emph{input map in time $T$} is the bounded mapping $\cB^T \colon  L^2([0,T], U) \to X$ given by $\cB^T u = \int_0^T \euler^{- A (T-s)} B u(s) \drm s$.
\end{definition}
\begin{remark}
 Note that if the system~\eqref{eq:abstract_control_system} is null-controllable in time $T > 0$, then, by linearity of
 $\euler^{- A T}$, it is also controllable on the range of $\euler^{- A T}$. This means that for every $w_0 \in X$ and every
 $u_T \in \ran \euler^{- A T}$ there is a control function $u \in L^2([0,T], U)$ such that the solution
 of~\eqref{eq:abstract_control_system} satisfies $w(T) = u_T$.
\par
Taking into account~\eqref{eq:Duhamel_formula}, a null-control function $u$ in time $T$ satisfies $\euler^{- A T} u_0 + \cB^T u = 0$. Thus, the system ~\eqref{eq:abstract_control_system} is null-controllable in time $T > 0$
if and only if one has the relation $\ran \cB^T \supset \ran \euler^{- A T}$, which gives an alternative definition of
null-controllability in terms of the input map.
\end{remark}
We will always take duals of the spaces $X_\beta$ with respect to the pivot space $X$.
Hence, the dual of $X_\beta$ is $X_{- \beta}$ for all $\beta \in \RR$ and in particular $B^\ast \in \cL(X_{- \beta}, U)$.
In order to introduce the notion of final-state-observability, we consider the adjoint system
\begin{equation}
 \label{eq:abstract_control_system_free}
 \dot f + A f = 0, \quad
 y = B^\ast f , \quad
 f (0) = f_0 \in X ,
\end{equation}
where $f \in L^2 ([0,T],X)$.
\begin{definition}\label{def:final-state-obs}
 The system~\eqref{eq:abstract_control_system_free} is called \emph{final-state-observable} in time $T > 0$ if there is a constant
 $C_\obs > 0$ such that for all $f_0 \in X$ we have
 \begin{equation}
  \label{eq:abstract_observability_estimate}
  \lVert \euler^{- A T} f_0 \rVert_X^2
  \leq
  C_\obs^2
  \int_0^T
  \lVert B^\ast \euler^{- A t} f_0 \rVert_U^2
  \drm t .
 \end{equation}
 Ineq.~\eqref{eq:abstract_observability_estimate} is
 called~\emph{observability inequality}.
\end{definition}
By an analogous reasoning as above, we see that $f(t) = \euler^{- A t} f_0 \in X_{- \beta}$ for all $t > 0$ whence \eqref{eq:abstract_control_system_free} and the right hand side of~\eqref{eq:abstract_observability_estimate} are well-defined.
The following lemma, due to Douglas \cite{Douglas-66} and Dolecki and Russell \cite{DoleckiR-77}, puts these concepts into relation. For a proof we refer also to \cite{TucsnakW-09,TenenbaumT-11}.
\begin{lemma}
\label{lem:absop}
  Let $\cH_1, \cH_2,\cH_3$ be Hilbert spaces, and let $\cX \colon \cH_1 \to \cH_3$, $\cY \colon \cH_2 \to \cH_3$ be bounded operators.
  Then, the following are equivalent:
  \begin{enumerate}[(a)]
   \item $\ran \cX \subset \ran \cY$.
   \item There is $c>0$ such that $\lVert \cX^* z \rVert \le c \lVert \cY^* z \rVert $ for all $z\in \cH_3$.
   \item There is a bounded operator $\cZ \colon \cH_1 \to \cH_2$ satisfying $\cX = \cY \cZ$.
  \end{enumerate}
  Moreover, in this case, one has
  \begin{equation}\label{eq:absop}
   \inf\{ c\colon c\text{ as in (b)}\} = \inf\{\lVert \cZ \rVert  \colon \cZ \text{ as in (c)}\},
  \end{equation}
  and both infima are actually minima.
\end{lemma}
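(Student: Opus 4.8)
The plan is to recognise Lemma~\ref{lem:absop} as the classical range-inclusion lemma of Douglas and to establish the four implications $(c)\Rightarrow(a)$, $(a)\Rightarrow(c)$, $(c)\Rightarrow(b)$ and $(b)\Rightarrow(c)$, tracking operator norms throughout so that the identity \eqref{eq:absop} and the attainment of both minima fall out of the same bookkeeping. The implication $(c)\Rightarrow(a)$ is immediate, since $\cX=\cY\cZ$ forces $\ran\cX=\cY(\ran\cZ)\subset\ran\cY$, and $(c)\Rightarrow(b)$ is equally quick: passing to adjoints in $\cX=\cY\cZ$ gives $\cX^\ast=\cZ^\ast\cY^\ast$, hence $\lVert\cX^\ast z\rVert\le\lVert\cZ\rVert\,\lVert\cY^\ast z\rVert$ for all $z\in\cH_3$, so any admissible value of $\lVert\cZ\rVert$ in (c) is an admissible $c$ in (b). The genuine content therefore lies in the two implications that manufacture a factorization, $(a)\Rightarrow(c)$ and $(b)\Rightarrow(c)$.

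For $(b)\Rightarrow(c)$ I would build an operator on the range of $\cY^\ast$ by setting $R(\cY^\ast z):=\cX^\ast z$, and use hypothesis (b) to see that $R$ is well defined and bounded by $c$: if $\cY^\ast z=\cY^\ast z'$ then $\lVert\cX^\ast z-\cX^\ast z'\rVert\le c\,\lVert\cY^\ast(z-z')\rVert=0$, and the same estimate gives $\lVert Rg\rVert\le c\,\lVert g\rVert$ for $g\in\ran\cY^\ast$. I would then extend $R$ by continuity to $\overline{\ran\cY^\ast}$ and by zero on the orthogonal complement $(\overline{\ran\cY^\ast})^{\perp}=\Ker\cY$, obtaining $R\in\cL(\cH_2,\cH_1)$ with $\lVert R\rVert\le c$ and $R\cY^\ast=\cX^\ast$. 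Taking adjoints once more, $\cZ:=R^\ast\in\cL(\cH_1,\cH_2)$ satisfies $\cY\cZ=\cX$ and $\lVert\cZ\rVert=\lVert R\rVert\le c$. Combined with $(c)\Rightarrow(b)$ this already yields $\inf\{c\}=\inf\{\lVert\cZ\rVert\}=:m$. For the attainment, I would note that the set of admissible $c$ is closed under passing to the infimum (if $\lVert\cX^\ast z\rVert\le c_n\lVert\cY^\ast z\rVert$ for each $n$ and $c_n\downarrow m$, then $\lVert\cX^\ast z\rVert\le m\,\lVert\cY^\ast z\rVert$), so the first minimum is attained; feeding this optimal $c=m$ into the construction above produces a $\cZ$ with $\lVert\cZ\rVert\le m$, which by $(c)\Rightarrow(b)$ also satisfies $\lVert\cZ\rVert\ge m$, so $\lVert\cZ\rVert=m$ and the second minimum is attained too.

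For $(a)\Rightarrow(c)$ I would, for each $h\in\cH_1$, use $\ran\cX\subset\ran\cY$ to select the minimal-norm solution $\cZ h$ of $\cY g=\cX h$, equivalently the unique solution lying in $(\Ker\cY)^{\perp}=\overline{\ran\cY^\ast}$. Uniqueness of the minimal-norm solution makes $h\mapsto\cZ h$ linear, and boundedness I would obtain from the closed graph theorem: if $h_n\to h$ in $\cH_1$ and $\cZ h_n\to g$ in $\cH_2$, then $\cY g=\lim_n\cY\cZ h_n=\lim_n\cX h_n=\cX h$ while $g\in\overline{\ran\cY^\ast}$ because that subspace is closed, whence $g=\cZ h$. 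Thus $\cZ\in\cL(\cH_1,\cH_2)$ and $\cX=\cY\cZ$, which closes the chain.

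I expect the only delicate point to be the well-definedness step in $(b)\Rightarrow(c)$: this is the single place where the quantitative hypothesis (b) is actually consumed, and it simultaneously produces the norm bound; a related, easy-to-overlook subtlety is that the extension-by-zero on $\Ker\cY$ is genuinely needed to upgrade $R$ from an operator on $\overline{\ran\cY^\ast}$ to one on all of $\cH_2$. The only other non-formal ingredient is the appeal to the closed graph theorem in $(a)\Rightarrow(c)$, where completeness of the Hilbert spaces enters; everything else — the adjoint identities and the two short limiting arguments for the minima — is routine.
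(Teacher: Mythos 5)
Your proof is correct and is the standard Douglas factorization argument; the paper itself gives no proof of this lemma but merely cites Douglas \cite{Douglas-66}, Dolecki--Russell \cite{DoleckiR-77} and the textbook sources \cite{TucsnakW-09,TenenbaumT-11}, where essentially your construction (defining $R$ on $\ran\cY^\ast$ via (b), extending by zero on $\Ker\cY$, and dualizing) appears. The implication chain, the norm bookkeeping, and the attainment of both minima are all handled correctly.
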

We note that (a) corresponds to null-controllability (with $\cX = \euler^{-AT}\colon X \to X$ )
and (b) corresponds to final-state-observability with $$\cY \colon L^2([0,T], U) \to X,\quad \cY u =\int_0^T \euler^{- A (T-s)} B(s) u(s) \drm s.$$
Lemma~\ref{lem:absop} provides another equivalent statement (c). It implies that there exists an operator $\cF\colon X \to L^2([0,T], U)$ such that $-\euler^{-A T} = \cB^T \cF$. Hence $\cF w_0$ provides a null-control function in time $T$.
Moreover, according to \eqref{eq:absop} the operator $\cF$ can be chosen with minimal norm.
\begin{remark}
The operator $\cF$ can even be chosen to be pointwise minimal.	
  Let $w_0 \in X$, $T > 0$, and $u$ be a null-control function in time $T$. Then the set of all null-control functions in time $T$ is a closed affine space of the form
  \[
   u + \Ker \cB^T.
  \]
  Let now $P$ denote the orthogonal projection onto $\Ker \cB^T$.
  Then we have $-\euler^{-AT} = \cB^T (I-P)\cF$ and the operator $(I- P) \cF$ does not depend on the choice of $\cF$.
  Moreover, it is easy to see that for every $w_0 \in X$, the function $(I - P) \cF w_0 \in L^2([0,T], U)$ is the unique control with minimal norm associated to the initial datum $w_0$.
  This implies in particular the second equality in \eqref{eq:definition_cost}.
\end{remark}
\begin{definition}
 Assume that the system~\eqref{eq:abstract_control_system} is null-controllable.
 We define the \emph{control cost in time $T$} as
 \begin{align} \label{eq:definition_cost}
  C_T
  &:=
  \sup_{\lVert w_0 \rVert = 1} \min \{ \lVert u \rVert_{L^2([0,T],U)} \colon \euler^{-A T} w_0 + \cB^T u = 0 \}
  =
  \min
  \{
  C_\obs \colon \text{$C_\obs$ satisfies }\eqref{eq:abstract_observability_estimate} \}.
 \end{align}
\end{definition}
Our first result concerns an observability inequality and hence null-controllability for an abstract parabolic system of the form~\eqref{eq:abstract_control_system_free}.
In the theorem, we assume a so-called spectral inequality, given in Ineq.~\eqref{eq:UCP}.

\begin{theorem}\label{thm:obs_and_control}
 Let $A $ be a non-negative, self-adjoint operator on $X$ and assume that there are $d_0 > 0$, $d_1 \geq 0$ and $\gamma \in (0,1)$ such that for all $\lambda > 0$ and all $\phi \in X$ we have
 \begin{equation}\label{eq:UCP}
  \lVert P_A (\lambda) \phi \rVert^2
  \leq
  d_0 \euler^{d_1 \lambda^\gamma}
  \lVert B^\ast P_A (\lambda) \phi \rVert_U^2.
 \end{equation}
Then for all $T > 0$ and all $\phi \in X$ we have the observability estimate
\begin{equation}\label{eq:obs}
 \bigl\lVert \euler^{-AT} \phi \bigr\rVert^2
 \leq
 \const^2
 \int_0^T  \bigl\lVert B^* \euler^{-At} \phi \bigr\rVert_U^2 \drm t ,
\end{equation}
where $\const$ satisfies
\[
  \const^2
  =
  \frac{C_1 d_0}{T}
  K_1^{C_2}
  \exp \left(  C_3
   \left( \frac{d_1 + (-\beta)^{C_4}}{T^\gamma} \right)^{\frac{1}{1 - \gamma}} \right)
     \quad \text{with} \quad K_1 = 2 d_0 \euler^{-\beta} \lVert B \rVert_{\cL (U, X_\beta)}^2 + 1 .
\]
Here, $C_i > 0$, $i \in \{1,2,3,4\}$, are constants depending only on $\gamma$.
They are explicitly given by Eq.~\eqref{eq:final_constant_lemma}.
Moreover, for all $T > 0$ the system~\eqref{eq:abstract_control_system} is null-controllable in time $T$ with cost satisfying $\cost \leq \const$.
\end{theorem}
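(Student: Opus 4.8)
The strategy is the classical Lebeau--Robbiano--type telescoping / block-decomposition argument, carried out quantitatively and optimized in the spirit of Tenenbaum--Tucsnak, so that the dependence of $\const$ on the spectral-inequality data $d_0, d_1, \gamma$ and on the parameter $\beta$ of the Gelfand triple is explicit. Fix $T>0$ and $\phi \in X$. First I would reduce everything to a single time slice: by the semigroup law and self-adjointness, for any $0 < \tau \le T$ one has $\euler^{-AT}\phi = \euler^{-A(T-\tau)}\euler^{-A\tau}\phi$, and a standard dissipativity estimate lets me compare $\lVert \euler^{-AT}\phi\rVert$ with $\lVert \euler^{-A t}\phi\rVert$ for $t$ in a short subinterval; this is where the factor $1/T$ ultimately enters. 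The heart of the matter is then a low-frequency/high-frequency split: write $\phi = P_A(\lambda)\phi + (I - P_A(\lambda))\phi$. On the low-frequency part, apply the spectral inequality \eqref{eq:UCP} to $\euler^{-A t}P_A(\lambda)\phi$ — crucially, since $A$ and $P_A(\lambda)$ commute and $A \ge 0$ is bounded by $\lambda$ on the range of $P_A(\lambda)$, this controls $\lVert \euler^{-As}P_A(\lambda)\phi\rVert$ by $d_0 \euler^{d_1\lambda^\gamma}\lVert B^\ast \euler^{-As}P_A(\lambda)\phi\rVert_U^2$ (after integrating in $s$ over a subinterval of length $\sim \tau$). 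On the high-frequency part, the spectral theorem gives the dissipation gain $\lVert (I-P_A(\lambda))\euler^{-A\tau}\psi\rVert \le \euler^{-\lambda\tau}\lVert\psi\rVert$.

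Next I would set up the telescoping iteration. Choose an increasing sequence of cutoffs $\lambda_k$ (geometric, say $\lambda_k = (\rho^k \cdot \text{const})$ for suitable $\rho>1$ adapted to $\gamma$) and a corresponding partition of $[0,T]$ into intervals $I_k$ of lengths $\tau_k$ summing to $T$ (typically $\tau_k$ also chosen geometrically so the short-time dissipativity losses are summable). On each step, the observation term picks up a factor $d_0\euler^{d_1\lambda_k^\gamma}$ from \eqref{eq:UCP} and a loss from replacing $B^\ast\euler^{-As}P_A(\lambda_k)\phi$ by $B^\ast\euler^{-As}\phi$ minus a $B$-dependent correction on the high-frequency tail — the latter being where $\lVert B\rVert_{\cL(U,X_\beta)}$ and hence $K_1$, as well as the weight $\euler^{-\beta}$, enter (one must pass from the $X_\beta$-norm on $\ran B$ back to the $X$-norm using the isometry $U_\beta$, which produces the $(-\beta)^{C_4}$ term after absorbing polynomial-in-$\lambda$ factors into the exponential). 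The dissipation factor $\euler^{-\lambda_k\tau_k}$ must beat the accumulated $\euler^{d_1\lambda_k^\gamma}$ and $K_1^{(\text{number of steps})}$ growth; balancing $\lambda_k\tau_k$ against $\lambda_k^\gamma$ across the geometric scales is exactly the optimization that yields the exponent $\tfrac{1}{1-\gamma}$ and the form $\exp(C_3((d_1 + (-\beta)^{C_4})/T^\gamma)^{1/(1-\gamma)})$. I expect this balancing, together with bookkeeping all multiplicative constants through the infinite iteration so that they collapse into $C_1, C_2, C_3, C_4$ depending only on $\gamma$, to be the main technical obstacle; it is natural to isolate it as a separate technical lemma (as the excerpt indeed announces, via Eq.~\eqref{eq:final_constant_lemma}).

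Finally, the null-controllability statement and the bound $\cost \le \const$ are immediate once the observability estimate \eqref{eq:obs} is in hand: the observability inequality \eqref{eq:abstract_observability_estimate} holds with $C_\obs = \const$, so by Lemma~\ref{lem:absop} (with $\cX = \euler^{-AT}$, $\cY = \cB^T$, and condition (b) being precisely \eqref{eq:obs}) we get $\ran \euler^{-AT} \subset \ran \cB^T$, i.e.\ null-controllability in time $T$; and the equality of infima in \eqref{eq:absop} together with the definition \eqref{eq:definition_cost} of $C_T$ as the minimal admissible $C_\obs$ gives $\cost \le \const$. The only care needed here is the well-definedness of all terms on $X_\beta$ for $\beta \le 0$, which is already handled by Remark~\ref{lem:w_in_X} and the discussion preceding Definition~\ref{def:final-state-obs}.
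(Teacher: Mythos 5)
Your plan is correct and follows essentially the same route as the paper: a low/high frequency split combined with the spectral inequality on $\ran P_A(\lambda)$ and the dissipation gain $\euler^{-\lambda t}$ on the orthogonal complement, iterated over geometric scales in $\lambda$ and $t$ (the paper iterates the recursive inequality $F(t)\leq D_1(t,\lambda)+D_2(t,\lambda)F(t/4)$ at times $4^{-k}T$ with $\lambda_k=\nu\alpha^k$, in the Tenenbaum--Tucsnak style you invoke), with the parameter balancing producing the exponent $\tfrac{1}{1-\gamma}$ isolated in a separate technical lemma exactly as you anticipate. The concluding step via Lemma~\ref{lem:absop} is also the paper's.
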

 Note that the right hand side in \eqref{eq:UCP} is well-defined since $\ran P_A(\lambda)\subset X_{-\beta}$ for all $\lambda\in \mathbb{R}$, and \eqref{eq:obs} is well-defined as discussed below Definition~\ref{def:final-state-obs}. The mere statement that spectral inequalities imply observability estimates is not new.
The novel aspects of Theorem~\ref{thm:obs_and_control} are discussed in the following remark, while the proof is deferred to the next section.
\begin{remark}
\label{rem:main_theorem}
There exists a huge amount of earlier approaches which transfer spectral inequalities to observability inequalities, see e.g.\ \cite{LebeauR-95,Miller-10,TenenbaumT-11,LeRousseauL-12,BeauchardP-18}.
    Some of them are formulated in a more general setting and unlike our result do not require self-adjointness of $A$.
    However, the estimates on $\const^2$ therein are, with respect to the dependence on $d_0$ and $d_1$, not sufficient for our purpose in Section~\ref{sec:applications}. Let us explain this in more detail, and assume within this discussion that $\beta = 0$ and $\gamma = 1/2$.
    Our upper bound
    \[
    \const^2 = \frac{C_1 d_0}{T} K_1^{C_2} \exp \left( \frac{C_3 d_1^2}{T} \right)
    \]
    from Theorem~\ref{thm:obs_and_control} features the following properties:
 \begin{enumerate}[(i)]
  \item
  The exponent tends to zero if $d_1 \to 0$.
  \item
  The pre-factor $ C_1 d_0 K_1^{C_2} / T$ does not depend on $d_1$ and is proportional to $T^{-1}$.
  \item
  The estimate holds in a $d_1$-independent time interval (in our case $(0,\infty)$).
  \end{enumerate}
  All three properties are paramount for the applications to homogenization and
de-homo\-geniza\-tion in Section~\ref{sec:homogenization}.
  Let us stress that none of the earlier bounds we are aware of carry the features (i)--(iii) at the same time. For example, the papers \cite{Miller-10,BeauchardP-18} provide a bound of the form
  \begin{equation}\label{eq:BPS}
  \const^2 \leq C_1 \exp \left(\frac{C_2}{T}\right) ,
  \end{equation}
  where the dependence of the positive constants $C_1$ and $C_2$ on $d_0$ and $d_1$ can be inferred from their proof.
  Note that the bound~\eqref{eq:BPS} is missing the pre-factor $1/T$. Thus, $C_2$ in \eqref{eq:BPS} cannot be proportional to $d_1^2$ since for $d_1 = 0$ (full control) this contradicts the universal lower bound of order $1/T$, cf.\ Theorem~\ref{thm:lower_bound_control_cost}.
 \par
  In order to obtain our bound in Theorem~\ref{thm:obs_and_control}, we improve techniques developed in \cite{TenenbaumT-11}. Note that the bound given in \cite[Theorem~1.2]{TenenbaumT-11} already satisfies properties (i) and (iii), and carries the overall pre-factor $1/T$. However, it does not ensure that the influence of $d_1$ is confined only to the exponential term. Intricate parameter choices and estimates -- spelled out in Lemma~\ref{lem:Cobs} -- were necessary in order to achieve an estimate of the required form. Moreover, in contrast to \cite{TenenbaumT-11}, we do not require that the operator $A$ has discrete spectrum, thus extending the applicability e.g.\ to Schr\"odinger operators on unbounded subsets of $\RR^d$.
\end{remark}
\begin{remark}
The arguments used in the proof of Theorem~\ref{thm:obs_and_control} can be extended to the case where $B \colon [0,T] \to  \cL (U , X_{\beta})$ is time-dependent with only minimal modifications.
For the basic results about the integration theory on Hilbert spaces used here we refer to \cite{HilleP57} and \cite{DiestelU77}.
Let $\cH_i$ for $i \in\{1,2,3\}$ be separable Hilbert spaces and $I\subset \RR$ an interval.
Then
if $I \ni t \mapsto B(t)\in \cL(\cH_1, \cH_2)$ and
$I \ni t \mapsto A(t)\in \cL(\cH_2, \cH_3)$ are measurable
then the product $I \ni t \mapsto A(t)B(t)\in \cL(\cH_1, \cH_3)$
is measurable as well. If additionally
$I \ni t \mapsto x(t) \in \cH_1$ is measurable, then
$I \ni t \mapsto B(t)x(t) \in \cH_2$ is measurable as well.
Consequently the map $I \ni t \mapsto \lVert B(t)x \rVert^2= \langle x, B(t)^*B(t)x\rangle\in \RR$
is measurable too.
\par
In what follows let us assume that the Hilbert spaces $U$ and $X_\beta$
are separable, and that $B: [0,T] \to \cL(U,X_\beta)$ is measurable and uniformly bounded, meaning
 \begin{equation}\label{eq:sup_ess}
  \esssup_{t \in [0,T]} \lVert B(t) \rVert_{\cL(U, X_\beta)}
  <
  \infty.
 \end{equation}
Note that for any dense countable $U'\subset U\setminus \{0\}$ we have
\[
 \lVert B(t) \rVert_{\cL(U, X_\beta)}  =
 \sup_{u \in U'}  \lVert B(t) u \rVert_{X_\beta}/ \lVert u \rVert_U,
\]
 so that $t \to  \lVert B(t) \rVert_{\cL(U, X_\beta)}$ is measurable and the essential supremum (w.r.t. Lebesgue measure on $[0,T]$) makes sense.
Note that for every $t \in (0,T]$, the map $[0,t] \ni s \mapsto \euler^{- A (t - s)} $ is strongly continuous, hence measurable.
\par
An argument analogous to the discussion at the beginning of Section~\ref{sec:abstract} shows that we even have $\euler^{- A (T-s)} B(s) \allowbreak u(s) \in X$ for almost all $s \in [0,T]$.
In particular,
\[
  \cB^T f(t) = \int_0^t \euler^{- A (t-s)} B(s) u(s) \drm s \in X
\]
is well-defined for every $u \in L^2([0,T], U)$.
For each initial state $w_0\in X$ and every $u \in L^2([0,T], U)$, the evolution
\begin{equation*}
 w(t)
 =
 \euler^{- A t} w_0 + \cB^T u(t),
 \quad
 t \in [0, T] ,
\end{equation*}
solves the equation
\begin{equation*}
  \label{eq:time dependent_control_system}
 \dot w(t) + Aw(t) = B(t)u(t),
 \quad
 w(0)=w_0 \in X.
\end{equation*}
\end{remark}
The following theorem is the natural generalization of Theorem~\ref{thm:obs_and_control} to time-dependent $B$:
\begin{theorem}
 Let $\beta\leq 0$, $d_0 > 0$, $d_1 \geq 0$, $T > 0$, and $\gamma \in (0,1)$.
 Let $U, X_\beta$ be separable, $A \geq 0$,  and let $B \colon [0,T] \to  \cL (U , X_{\beta})$ be measurable and satisfy \eqref{eq:sup_ess}.
 Assume that
\begin{equation*}
  \lVert P_A (\lambda) \phi \rVert^2
  \leq
  d_0 \euler^{d_1 \lambda^\gamma}
  \lVert B(t)^\ast P_A (\lambda) \phi \rVert_U^2
  \quad
  \text{for almost all $t \in [0,T]$}.
 \end{equation*}
Then for  all $\phi \in X$ we have the observability estimate
\begin{equation*}
 \bigl\lVert \euler^{-AT} \phi \bigr\rVert^2
 \leq
 \const^2
 \int_0^T  \bigl\lVert B^* \euler^{-At} \phi \bigr\rVert_U^2 \drm t ,
\end{equation*}
where $\const$ satisfies
\begin{equation*}
  \const^2
  =
  \frac{C_1 d_0}{T}
  K_1^{C_2}
  \exp \left(  C_3
   \left( \frac{d_1 + (-\beta)^{C_4}}{T^\gamma} \right)^{\frac{1}{1 - \gamma}} \right)
  \quad
\end{equation*}
with
\begin{equation*}
 K_1 =
 1+\operatorname{ess\ sup}_{t\in[0,T]} \left(2d_0\euler^{-\beta} \lVert B(t)\rVert_{\cL(U,X_\beta)}^2\right).
\end{equation*}
Here, $C_i > 0$, $i \in \{1,2,3,4\}$, are constants depending only on $\gamma$.
They are explicitly given by Eq.~\eqref{eq:final_constant_lemma}.
Moreover, the system~\eqref{eq:time dependent_control_system} is null-controllable in time $T$ with cost satisfying $\cost \leq \const$.
\end{theorem}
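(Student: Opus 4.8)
The plan is to reprove the statement by running the proof of Theorem~\ref{thm:obs_and_control} essentially line by line, now carrying the time variable along and tracking the --- as it will turn out, only --- three places where the control operator enters the argument. Recall that the proof of Theorem~\ref{thm:obs_and_control} follows the direct method of~\cite{TenenbaumT-11}: one balances the spectral inequality~\eqref{eq:UCP} for the low-frequency block $P_A(\lambda)\phi$ against the semigroup decay $\lVert \euler^{-At}(I - P_A(\lambda))\rVert \le \euler^{-\lambda t}$ for the complementary block, across a range of spectral thresholds $\lambda$ and matching time scales, thereby producing the observability estimate directly (rather than by the iterative construction on shrinking subintervals of the Lebeau--Robbiano approach); the delicate parameter choices that confine the dependence on $d_1$ to the exponential factor while keeping the pre-factor proportional to $1/T$ are the content of the technical Lemma~\ref{lem:Cobs}. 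In that entire argument the operator $B$ enters only via (a) the scalar $\lVert B \rVert_{\cL(U,X_\beta)}^2$, which feeds into $K_1$, and (b) quantities of the form $\lVert B^\ast \euler^{-At}\psi \rVert_U$ and $\lVert B^\ast P_A(\lambda)\psi \rVert_U$, on which~\eqref{eq:UCP} is invoked.

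First I would record the measurability facts assembled in the remark preceding the theorem: $t \mapsto \lVert B(t)^\ast \euler^{-At}\phi \rVert_U$ is measurable, and $s \mapsto \euler^{-A(t-s)}B(s)u(s)$ is measurable and $X$-valued for a.e.\ $s$, so that $\cB^T$ and the right-hand side of the claimed observability estimate are well defined. Then, at each occurrence I would replace~\eqref{eq:UCP} by its time-dependent version at the relevant instant $t$, and replace $\lVert B \rVert_{\cL(U,X_\beta)}$ by $\esssup_{t\in[0,T]} \lVert B(t)\rVert_{\cL(U,X_\beta)}$, which is finite by~\eqref{eq:sup_ess}; this is exactly the substitution that turns the old constant $K_1$ into the one appearing in the present statement. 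Since none of the remaining ingredients --- non-negativity and self-adjointness of $A$, the spectral calculus, and Lemma~\ref{lem:Cobs} --- refers to $B$, every individual estimate survives the substitution and reproduces the asserted value of $\const^2$.

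The one point that genuinely needs care is reconciling the ``for almost all $t$'' in the hypothesis with the threshold-by-threshold structure of the direct method. Here the resolution is routine: for each fixed $\lambda$ and $\psi$ the hypothesis fails only on a null set of times, and the proof calls on the spectral inequality for only countably many pairs --- the thresholds used, together with the correspondingly evolved and projected states --- or, in a version of the argument that integrates over a continuum of thresholds, for almost every threshold, by Fubini's theorem and the measurability just recalled; either way the union $N\subset[0,T]$ of the relevant exceptional sets is still null. Since each step of the argument controls an integral $\int_a^b \lVert B(t)^\ast \euler^{-At}\phi \rVert_U^2\,\drm t$ over a subinterval of positive length --- an object unaffected by the values of the integrand on $N$ --- and no step requires~\eqref{eq:UCP} to hold at a prescribed instant, the whole argument of Theorem~\ref{thm:obs_and_control} goes through on $[0,T]\setminus N$. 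I expect this bookkeeping to be the main, and essentially the only, obstacle; everything else is transcription, which is why the preceding remark describes these as ``minimal modifications''.

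Finally, null-controllability of~\eqref{eq:time dependent_control_system} in time $T$ and the cost bound $\cost \le \const$ follow from the observability estimate exactly as in Theorem~\ref{thm:obs_and_control}: one applies Lemma~\ref{lem:absop} with $\cH_1 = \cH_3 = X$, $\cH_2 = L^2([0,T],U)$, $\cX = \euler^{-AT}$ and $\cY = \cB^T$ assembled from the time-dependent $B$ as in the preceding remark. The observability estimate is assertion~(b) of Lemma~\ref{lem:absop}, hence assertion~(a), i.e.\ $\ran \euler^{-AT} \subset \ran \cB^T$, which is precisely null-controllability; and the equality of infima~\eqref{eq:absop}, read against the definition~\eqref{eq:definition_cost} of $\cost$, yields $\cost \le \const$.
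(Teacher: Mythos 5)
Your proposal is correct and follows essentially the same route as the paper, whose own proof simply declares that the observability estimate is obtained ``by following verbatim the proof of Theorem~\ref{thm:obs_and_control}'' and then, exactly as you do, checks that $\cB^T$ is a well-defined bounded operator into $X$ so that Lemma~\ref{lem:absop} yields null-controllability and $\cost \le \const$. Your bookkeeping for the almost-everywhere hypothesis is in fact more careful than the paper's; the only slip is that the evolved states $\euler^{-A\tau}\phi$ to which the spectral inequality is applied in \eqref{eq:Flambda} form a continuum rather than a countable family, but this is harmless since your alternative argument (separability of $X$ plus the fact that every use of the inequality sits under an integral over a time interval, which is insensitive to null sets) already covers it.
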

\begin{proof}
 The observability estimate is proved by following verbatim the proof of Theorem~\ref{thm:obs_and_control}.
To prove null-controllability and the control cost bound one uses part (b) of Lemma~\ref{lem:absop} with
 $\cX = \euler^{-AT}\colon X \to X$ and $\cY \colon L^2([0,T], U) \to X$, $\cY u =\int_0^T \euler^{- A (T-s)} B(s) u(s) \drm s$.
 Note that apriori $B(s) u(s) \in X_\beta$.
 However, similarly to
 Remark~\ref{lem:w_in_X}
 we see that $\euler^{- A (T-s)} B(s) u(s) \in X$ for almost all $s \in [0,T]$.
 This shows that $\cY$ indeed maps into $X$ and not just into $X_\beta$.
 Boundedness of $\cY$ follows, arguing as in \eqref{eq:spectral_calculus},  from
 \begin{align*}
  \lVert \cY u \rVert^2
  &=
  \int_0^T \lVert \euler^{- A (T - s)} B(s) u(s) \rVert_{X_\beta}^2 \drm s
  \leq
  \int_0^T
  \lVert B(s) u(s) \rVert^2 \drm s
  \\
  &\leq
  \int_0^T
  \lVert B(s) \rVert_{\cL(U, X_\beta)}^2 \rVert u(s) \rVert_U^2 \drm s
  \leq
  \esssup_{t \in [0,T]}
  \lVert B(t) \rVert_{\cL(U, X_\beta)}^2
  \lVert u \rVert_{L^2([0,T], U)}^2 < \infty . \qedhere
 \end{align*}
 \end{proof}
So far, we have only treated the case of non-negative $A$.
The next theorem is an equivalent formulation of Theorem~\ref{thm:obs_and_control} and also treats the situation where $A$ is not assumed to be non-negative any more but merely lower semibounded.
Recall that $\min \sigma(A) = \infspec$.
\begin{theorem}
  \label{cor:upper_bounds_control_cost}
Assume that there are $d_0 > 0$, $d_1 \geq 0$ and $\gamma \in (0,1)$ such that for all $\lambda > \infspec$ and all $\phi \in X$ we have
 \begin{equation*}
  \lVert P_A (\lambda) \phi \rVert^2
  \leq
  d_0 \euler^{d_1 (\lambda - \infspec)^\gamma}
  \lVert B^\ast P_A (\lambda) \phi \rVert_U^2.
 \end{equation*}
Then for all $T > 0$ and all $\phi \in X$ we have the observability estimate
\begin{equation}
 \label{eq:obs_lambda_0}
 \bigl\lVert \euler^{-AT} \phi \bigr\rVert^2
 \leq
 \const^2
 \int_0^T \euler^{- 2 \infspec ( T - t)}  \bigl\lVert B^* \euler^{-At} \phi \bigr\rVert_U^2 \drm t ,
\end{equation}
where $\const$ is as in Theorem~\ref{thm:obs_and_control}.
Moreover, for all $T > 0$, the system~\eqref{eq:abstract_control_system} is null-controllable in time $T$.
Let $K_1 = 2 d_0 \euler^{-\beta} \lVert B \rVert_{\cL (U, X_\beta)}^2 + 1$.
\begin{enumerate}[(a)]
 \item
 If $\infspec < 0$, then the cost satisfies
  \[
  \const^2 
  \leq
  \inf_{t \in (0, T]}
   \frac{C_1 d_0}{t}
   K_1^{C_2}
   \exp \left(  C_3
    \left( \frac{d_1 + (- \beta)^{C_4} }{t^\gamma} \right)^{\frac{1}{1 - \gamma}} - 2 \infspec t \right).
 \]
 \item
 If $\infspec = 0$, then the cost satisfies
 \[
  \const^2 
  \leq
   \frac{C_1 d_0}{T}
   K_1^{C_2}
   \exp \left(  C_3
    \left( \frac{d_1 + (- \beta)^{C_4}}{T^\gamma} \right)^{\frac{1}{1 - \gamma}} \right).
 \]
 \item
 If $\infspec > 0$, then the cost satisfies
 \[
  \const^2 
  \leq
  \inf_{t \in [0,T)}
   \frac{C_1 d_0}{T - t}
   K_1^{C_2}
   \exp \left(  C_3
    \left( \frac{d_1 + (- \beta)^{C_4} }{(T - t)^\gamma} \right)^{\frac{1}{1 - \gamma}} - 2 \infspec t \right)
   .
 \]
\end{enumerate}
\end{theorem}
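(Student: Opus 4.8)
The plan is to reduce the statement to the already established non-negative case, Theorem~\ref{thm:obs_and_control}, via the spectral shift $\tilde A := A - \infspec I$, and then to extract the three sharpened cost bounds from two elementary time-translation arguments for controls.

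\textbf{Steps 1 and 2 (shift to $A\ge 0$ and shift back).} Since $\min\sigma(A)=\infspec$, the operator $\tilde A := A-\infspec I$ is non-negative and self-adjoint, and $P_{\tilde A}(\mu)=P_A(\mu+\infspec)$. Evaluating the hypothesis at $\lambda=\mu+\infspec>\infspec$ therefore gives, for all $\mu>0$ and $\phi\in X$,
\[
 \lVert P_{\tilde A}(\mu)\phi\rVert^2 \le d_0\,\euler^{d_1\mu^\gamma}\,\lVert B^\ast P_{\tilde A}(\mu)\phi\rVert_U^2 ,
\]
which is precisely the spectral inequality \eqref{eq:UCP} for $\tilde A$ with the same $d_0,d_1,\gamma$. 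Applying Theorem~\ref{thm:obs_and_control} to $\tilde A$ yields, for every $T>0$, an observability estimate for $\euler^{-\tilde A T}$ with constant $\const$ as in Theorem~\ref{thm:obs_and_control}. Using $\euler^{-\tilde A t}=\euler^{\infspec t}\euler^{-At}$, its left-hand side equals $\euler^{2\infspec T}\lVert\euler^{-AT}\phi\rVert^2$ and the integrand equals $\euler^{2\infspec t}\lVert B^\ast\euler^{-At}\phi\rVert_U^2$; dividing by $\euler^{2\infspec T}$ produces exactly \eqref{eq:obs_lambda_0}. On $[0,T]$ the weight $\euler^{-2\infspec(T-t)}$ is bounded by $\max\{1,\euler^{-2\infspec T}\}$, so \eqref{eq:obs_lambda_0} implies the plain observability inequality \eqref{eq:abstract_observability_estimate}, and null-controllability in time $T$ follows by the duality recorded after Definition~\ref{def:final-state-obs} through Lemma~\ref{lem:absop}.

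\textbf{Step 3 (sharpened cost bounds).} Running the argument of Steps 1--2 on an interval of arbitrary length $\tau>0$ and bounding the weight gives, for the control cost $C_\tau$ in time $\tau$,
\[
 C_\tau^2 \le \const(\tau)^2\cdot\begin{cases}1,&\infspec\ge0,\\[0.3ex] \euler^{-2\infspec\tau},&\infspec<0,\end{cases}
\]
where $\const(\tau)$ denotes the constant $\const$ with $T$ replaced by $\tau$. Now combine this with two observations. First, if the system is null-controllable in time $\tau$ with cost $c$, it is null-controllable in every larger time $T\ge\tau$ with cost $\le c$: extend the control by $0$ on $[\tau,T]$, after which the trajectory stays at $0$. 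Second, $\cost\le\euler^{-\infspec t}\,C_{T-t}$ for every $t\in[0,T]$: let the system evolve freely for time $t$, which changes the norm of the state by at most $\lVert\euler^{-At}\rVert=\euler^{-\infspec t}$, and then steer the resulting state to $0$ in the remaining time $T-t$. The case $\infspec=0$ is then immediate. For $\infspec<0$ the first observation with $\tau=t\le T$ gives $\cost^2\le\inf_{t\in(0,T]}\euler^{-2\infspec t}\const(t)^2$, which is case~(a). For $\infspec>0$ the second observation together with $C_{T-t}^2\le\const(T-t)^2$ gives $\cost^2\le\inf_{t\in[0,T)}\euler^{-2\infspec t}\const(T-t)^2$, which is case~(c). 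Substituting the explicit form of $\const(\cdot)$ yields the three displayed formulas.

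\textbf{Main obstacle.} Little here is hard; the one point requiring care is the bookkeeping in Step~1, namely checking that passing from $A$ to $\tilde A=A-\infspec I$ does not affect the constant $\const$ beyond the explicit $\infspec$-dependent factors appearing in \eqref{eq:obs_lambda_0} and in cases~(a) and~(c). This is transparent for $\beta=0$, the case relevant for the applications, since then $X_\beta=X$ with unchanged norm; for general $\beta\le 0$ it reduces to the equivalence of the $A$- and $\tilde A$-extrapolation scales together with the fact that the proof of Theorem~\ref{thm:obs_and_control} uses $A$ only through the semigroup $\euler^{-At}$ and the spectral inequality.
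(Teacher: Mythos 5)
Your proposal is correct and follows essentially the same route as the paper's proof: the spectral shift to $A-\infspec$, an application of Theorem~\ref{thm:obs_and_control} followed by division by $\euler^{2\infspec T}$ to obtain \eqref{eq:obs_lambda_0}, the extend-the-control-by-zero argument for $\infspec<0$, and the wait-then-control argument for $\infspec>0$. Your explicit remark on the $\beta<0$ bookkeeping (equivalence of the $A$- and $(A-\infspec)$-scales) is a point the paper itself passes over silently, so no gap there either.
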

The proof of the theorem is deferred to the next section.

In order to investigate the sharpness of the estimates obtained above, we compare them to lower bounds. While these lower bounds are not too difficult to obtain, we provide a proof as a convenience for the reader.
  \begin{theorem}
  \label{thm:lower_bound_control_cost}
  Let $T > 0$ and assume that the system~\eqref{eq:abstract_control_system} is null-controllable in time $T$.
  Then
  \begin{equation*}
   \cost^2
   \geq
   \lVert B \rVert_{\mathcal{L}(U, X_\beta)}^{-2}
   (1 + \infspec^2)^\beta
   \cdot
   \begin{cases}
    \frac{1}{T}
    &
    \text{if $\infspec = 0$},\\[1ex]
    \frac{2 \infspec }{\exp ( 2 \infspec T) - 1}
    &
    \text{if $\infspec \neq 0$}.
   \end{cases}
  \end{equation*}
  \end{theorem}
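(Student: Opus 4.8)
The plan is to use the characterisation $\cost = \min\{C_\obs \colon C_\obs \text{ satisfies } \eqref{eq:abstract_observability_estimate}\}$ from \eqref{eq:definition_cost}, testing the observability inequality \eqref{eq:abstract_observability_estimate} against unit vectors concentrated at the bottom of the spectrum of $A$. Since $\infspec = \min\sigma(A) \in \sigma(A)$ and $\sigma(A) \subset [\infspec, \infty)$, for each $n \in \NN$ the projection $P_A(\infspec + 1/n)$ is nonzero, so one may pick a unit vector $\phi_n \in \ran P_A(\infspec + 1/n)$, whose spectral measure is then supported in $[\infspec, \infspec + 1/n]$. By the spectral theorem, for every $t \geq 0$ one has
\[
 \euler^{-2(\infspec + 1/n)t}
 \;\leq\;
 \lVert \euler^{-At}\phi_n \rVert^2
 \;=\;
 \int_{[\infspec,\, \infspec + 1/n]} \euler^{-2\lambda t} \, \drm \lVert P_A(\lambda)\phi_n \rVert^2
 \;\leq\;
 \euler^{-2\infspec t} ,
\]
and in particular $\lVert \euler^{-AT}\phi_n \rVert^2 \geq \euler^{-2(\infspec + 1/n)T}$, bounding below the left-hand side of \eqref{eq:abstract_observability_estimate}.

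For the right-hand side I would estimate $B^\ast$ by its norm. Taking the adjoint with respect to the pivot space $X$, one has $\lVert B^\ast \rVert_{\cL(X_{-\beta}, U)} = \lVert B \rVert_{\cL(U, X_\beta)}$, hence $\lVert B^\ast \euler^{-At}\phi_n \rVert_U^2 \leq \lVert B \rVert_{\cL(U, X_\beta)}^2 \, \lVert \euler^{-At}\phi_n \rVert_{X_{-\beta}}^2$. Since $\beta \leq 0$ and $\lambda \mapsto (1 + \lambda^2)^{-\beta}$ is continuous, functional calculus gives, for $t \geq 0$,
\[
 \lVert \euler^{-At}\phi_n \rVert_{X_{-\beta}}^2
 =
 \int_{[\infspec,\, \infspec + 1/n]} (1 + \lambda^2)^{-\beta} \euler^{-2\lambda t} \, \drm \lVert P_A(\lambda)\phi_n \rVert^2
 \;\leq\;
 M_n \, \euler^{-2\infspec t} ,
 \qquad
 M_n := \sup_{\lambda \in [\infspec,\, \infspec + 1/n]} (1 + \lambda^2)^{-\beta} .
\]
Inserting $\phi = \phi_n$ and $C_\obs = \cost$ into \eqref{eq:abstract_observability_estimate} and evaluating $\int_0^T \euler^{-2\infspec t} \, \drm t$, which equals $T$ if $\infspec = 0$ and $(1 - \euler^{-2\infspec T})/(2\infspec)$ if $\infspec \neq 0$, yields
\[
 \euler^{-2(\infspec + 1/n)T}
 \;\leq\;
 \cost^2 \, \lVert B \rVert_{\cL(U, X_\beta)}^2 \, M_n \int_0^T \euler^{-2\infspec t} \, \drm t .
\]

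Finally I would let $n \to \infty$: then $M_n \to (1 + \infspec^2)^{-\beta}$ and the left-hand side tends to $\euler^{-2\infspec T}$, so
\[
 \euler^{-2\infspec T}
 \;\leq\;
 \cost^2 \, \lVert B \rVert_{\cL(U, X_\beta)}^2 \, (1 + \infspec^2)^{-\beta} \int_0^T \euler^{-2\infspec t} \, \drm t ,
\]
and solving for $\cost^2$ produces exactly the two cases of the theorem; in the case $\infspec \neq 0$ one rewrites $\euler^{-2\infspec T}/(1 - \euler^{-2\infspec T}) = 1/(\euler^{2\infspec T} - 1)$, which is positive for either sign of $\infspec$. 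The argument is essentially routine; the only point I expect to need some care is the limiting step — making sure the test vectors can be placed at the very bottom of $\sigma(A)$ even when $\infspec$ is not an eigenvalue (guaranteed since $\infspec \in \sigma(A)$, so $\ran P_A(\infspec + 1/n) \neq \{0\}$), and checking that all the spectral estimates point in the right direction, which works precisely because $t \geq 0$ and $\lambda \geq \infspec$ throughout.
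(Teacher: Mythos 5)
Your proof is correct and is essentially identical to the paper's: the paper also tests the observability inequality against vectors in $\ran P_A(\infspec+\epsilon)$, bounds the left side from below by $\euler^{-2(\infspec+\epsilon)T}\lVert\phi_0\rVert^2$ and the right side from above via $\lVert B^\ast\rVert_{\cL(X_{-\beta},U)}=\lVert B\rVert_{\cL(U,X_\beta)}$ together with the spectral representation of $\lVert\euler^{-At}\phi_0\rVert_{X_{-\beta}}^2$, and then lets $\epsilon\to 0$. Your use of the supremum $M_n$ of $(1+\lambda^2)^{-\beta}$ over the spectral window is a slightly more careful bookkeeping than the paper's, but it changes nothing in the limit.
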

  \begin{remark} \label{rem:lower-sharp}
   If $B$ is (a multiple of) the identity, one immediately sees from the proof that the bound in Theorem~\ref{thm:lower_bound_control_cost} becomes an equality.
    This means that Theorem~\ref{thm:lower_bound_control_cost} is sharp as a universal lower bound.
  \end{remark}
 \begin{corollary}
  \label{cor:lower_bound_cost_control_cost}
 In the situation of Theorem~\ref{thm:lower_bound_control_cost} we have
   \begin{equation*}
   \cost^2
   \geq
   \lVert B \rVert_{\mathcal{L}(U, X_\beta)}^{-2}
   (1 + \infspec^2)^\beta
   \cdot
   \begin{cases}
    \left( \frac{1}{2 T} - \infspec \right)
    &
    \text{if $\infspec < 0$},\\[1ex]
    \frac{1}{T}
    &
    \text{if $\infspec = 0$},\\[1ex]
    \frac{1}{T} \exp (- 2 \infspec T)
    &
    \text{if $\infspec > 0$}.
   \end{cases}
  \end{equation*}
  Furthermore,
  \begin{equation*}
   \inf_{T > 0}
   \cost^2
   \geq
   \lVert B \rVert_{\mathcal{L}(U, X_\beta)}^{-2}
   (1 + \infspec^2)^\beta
   \cdot
   \begin{cases}
    -2  \infspec
    &
    \text{if $\infspec < 0$},\\[1ex]
    0
    &
    \text{if $\infspec \geq 0$}.
   \end{cases}
  \end{equation*}
 \end{corollary}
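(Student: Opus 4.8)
The plan is to read off both inequalities directly from Theorem~\ref{thm:lower_bound_control_cost}, so that the only work left is an elementary analysis of the scalar function
\[
 g(T) := \begin{cases} 1/T & \text{if } \infspec = 0,\\[1ex] \dfrac{2\infspec}{\euler^{2\infspec T}-1} & \text{if } \infspec \neq 0, \end{cases}
\]
which carries all the $T$-dependence of the lower bound (the factor $\lVert B\rVert_{\cL(U,X_\beta)}^{-2}(1+\infspec^2)^\beta$ is a positive constant that I will simply carry along). I expect every estimate to follow from the convexity bound $\euler^{x}\ge 1+x$, equivalently $1-\euler^{-x}\le x$, valid for all $x\in\RR$.

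For the first display I would split according to the sign of $\infspec$. The case $\infspec=0$ is literally the content of Theorem~\ref{thm:lower_bound_control_cost}. For $\infspec<0$ I would rewrite $g(T)=(-2\infspec)/(1-\euler^{2\infspec T})$, noting that the denominator lies in $(0,1)$; bounding it above by $1$ gives $g(T)>-2\infspec$, and bounding it above by $-2\infspec T$ (this is exactly $\euler^{2\infspec T}\ge 1+2\infspec T$) gives $g(T)\ge 1/T$. Averaging these two lower bounds yields $g(T)\ge\tfrac12(1/T)+\tfrac12(-2\infspec)=1/(2T)-\infspec$. For $\infspec>0$ I would put $x=2\infspec T>0$ and use $\euler^{x}-1=\euler^{x}(1-\euler^{-x})\le x\,\euler^{x}$ to obtain $g(T)=\tfrac1T\cdot\tfrac{x}{\euler^{x}-1}\ge\tfrac1T\euler^{-x}=\tfrac1T\euler^{-2\infspec T}$. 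Reinstating the constant prefactor then finishes the first claim.

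For the infimum over $T$ I would observe that in each case $g(T)$ tends, as $T\to\infty$, either to $0$ (if $\infspec\ge 0$, since $\euler^{2\infspec T}\to\infty$ or $1/T\to 0$) or to $-2\infspec$ (if $\infspec<0$, reading off from the form $g(T)=(-2\infspec)/(1-\euler^{2\infspec T})$ and $\euler^{2\infspec T}\to 0$), while the estimates just established show that $g(T)$ stays strictly above this limiting value for every finite $T>0$. Hence $\inf_{T>0}g(T)$ equals this limit, and combining with Theorem~\ref{thm:lower_bound_control_cost} together with the $T$-independence of the prefactor gives the second display.

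The argument is entirely routine; the only place calling for a little care is the case $\infspec<0$, where $\euler^{2\infspec T}-1$ is negative, so one should first pass to the equivalent form with positive denominator $1-\euler^{2\infspec T}$ before applying the convexity bound and before comparing magnitudes. I do not anticipate any genuine obstacle.
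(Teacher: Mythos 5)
Your proposal is correct, and it is essentially the intended argument: the paper states the corollary without proof as an immediate consequence of Theorem~\ref{thm:lower_bound_control_cost}, and your elementary manipulations (the bound $\euler^x \ge 1+x$, the averaging of the two lower bounds $1/T$ and $-2\infspec$ in the case $\infspec<0$, and the identity $\euler^x-1 \le x\euler^x$ for $x>0$) fill in exactly the omitted routine steps.
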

\begin{proof}[Proof of Theorem~\ref{thm:lower_bound_control_cost}]
  Since the system~\eqref{eq:abstract_control_system} is null-controllable in time $T$, an observability inequality holds and thus we have
  \[
   \forall \phi\in X \setminus \{0\} \colon \quad
   \int_0^T \lVert B^\ast \euler^{- A t} \phi \rVert_U^2 \drm t \not = 0.
  \]
  Hence, by definition we have
\begin{equation} \label{eq:definition_CT}
 \cost^2
 =
 \sup_{\phi \in X \setminus \{0\}} \frac{\lVert \euler^{- A T} \phi \rVert^2}{\int_0^T \lVert B^\ast \euler^{- A t} \phi \rVert_U^2 \drm t} .
\end{equation}
 Let $\epsilon > 0$ and $0 \not = \phi_0 \in P_A (\infspec + \epsilon)$, where $\infspec = \min \sigma(A)$. By spectral calculus we find
 \[
  \lVert \euler^{- A T} \phi_0 \rVert^2
  =
  \int_\kappa^{\kappa+\epsilon} \euler^{-2 \lambda T} \drm \lVert P_A (\lambda) \phi_0 \rVert^2
  \geq
  \euler^{-2 (\kappa + \epsilon) T} \lVert \phi_0 \rVert^2
 \]
 and
 \begin{align*}
  \int_0^T \lVert B^\ast \euler^{- A t} \phi_0 \rVert_U^2 \drm t
  &\leq
  \lVert B \rVert_{\mathcal{L} (U , X_\beta)}^2 \int_0^T \lVert \euler^{-A t} \phi_0 \rVert^2_{X_{-\beta}} \drm t \\
  &=
  \lVert B \rVert_{\mathcal{L} (U , X_\beta)}^2 \int_0^T \left( \int_{\kappa}^{\kappa + \epsilon} (1+\lambda^2)^{-\beta} \euler^{-2\lambda t} \drm \lVert P_A (\lambda) \phi_0 \rVert^2 \right) \drm t \\
  &\leq
  \lVert B \rVert_{\mathcal{L} (U , X_\beta)}^2 (1+(\kappa + \epsilon)^2)^{-\beta} \lVert \phi_0 \rVert^2  \int_0^T \euler^{-2\kappa t} \drm t .
 \end{align*}
For the latter integral we obtain $T$ if $\kappa = 0$, and $(1-\euler^{-2\kappa T}) / (2\kappa)$ if $\kappa \not = 0$. We choose $\phi = \phi_0$ in Eq.~\eqref{eq:definition_CT} and obtain an $\epsilon$-dependent lower bound on $\cost$. The statement of the theorem follows since $\epsilon > 0$ is arbitrary.
\end{proof}
  In particular, we see from Corollary~\ref{cor:lower_bound_cost_control_cost} that if $\infspec < 0$, then $C_\infty := \inf_{t > 0} \cost$ is strictly positive.
  This is in contrast to the situation $\infspec \geq 0$, where $C_\infty = 0$, i.e.\ the control cost vanishes in the large time limit.
\begin{remark}
  \label{rem:table}
  Let us now compare the lower bounds from Theorem~\ref{thm:lower_bound_control_cost} with the upper bounds from Theorems~\ref{thm:obs_and_control} and \ref{cor:upper_bounds_control_cost} in the special case $\beta = 0$.
  We focus on this case since in all our applications below we have $\beta = 0$.
\par
 In Table~\ref{table:asymptotics}, we summarize the asymptotic behavior of the upper and lower bounds on the control cost in the large and small time asymptotic regime.
 We only keep track of the parameters $T$, $d_1$ and $\infspec$ and omit multiplicative constants depending only on $d_0$, $\gamma$, and $\lVert B \rVert_{\cL(U, X)}$.
 The parameter $C$ stands for a constant which only depends on the parameter $\gamma$, and might change from case to case.
\par
In the case $\infspec < 0$ or $\infspec > 0$, the upper bounds in Theorem~\ref{cor:upper_bounds_control_cost} are given in terms of infima over $t \in (0,T]$ or $t \in (0,T)$, respectively.
In order to obtain the upper bounds in the table, for $T \to 0$ we choose $t = T/2$, while in the regime $T \to \infty$ we choose $t =  (-\infspec)^{-1}$ if $\infspec < 0$ and $t = T - 1$ if $\infspec > 0$.
\begin{table}[ht]\centering
\begin{tabular}{cc>{\centering\arraybackslash}p{5cm}>{\centering\arraybackslash}p{5cm}}
\toprule
                                 &              & lower bound  & upper bound \\
\midrule
 \multirow{2}{*}{\vspace{-4ex}$\infspec < 0$} & $T\to\infty$ & $-\infspec$               & $(-\infspec) \exp \left( C d_1^{\frac{1}{1 - \gamma}} (-\infspec)^\frac{\gamma}{1 - \gamma} \right)$ \\[2ex]
                                 & $T\to 0$     & $T^{-1}$    & $T^{-1} \exp \left( C \left( \frac{d_1}{T^{\gamma}} \right)^{\frac{1}{1- \gamma}} \right)$ \\
\midrule
 \multirow{2}{*}{\vspace{-4ex}$\infspec = 0$} & $T\to\infty$ &    $T^{-1}$          &  $T^{-1}$  \\[2ex]
                                 & $T\to 0$     & $T^{-1}$   &  $T^{-1} \exp \left( C \left( \frac{d_1}{T^{\gamma}} \right)^{\frac{1}{1- \gamma}} \right)$ \\
\midrule
 \multirow{2}{*}{\vspace{-4ex}$\infspec > 0$} & $T\to\infty$ & $T^{-1}\euler^{-2\infspec T}$ &   $\euler^{-2\infspec T} \exp \left( C d_1^{\frac{1}{1 - \gamma}} + 2 \infspec \right)$ \\[2ex]
                                 & $T\to 0$     & $T^{-1}$           & $T^{-1} \exp \left( C \left( \frac{d_1}{T^{\gamma}} \right)^{\frac{1}{1- \gamma}} \right)$ \\
\bottomrule
\end{tabular}
\caption{Asymptotic behavior of lower and upper bounds on $\cost^2$ in the case $\beta = 0$ \label{table:asymptotics}}
\end{table}
To discuss these bounds, let us first consider the case $d_1 = 0$.
This implies that we have full control in the sense that the control operator $B$ is boundedly invertible.
In this situation, the upper and lower bounds in Table~\ref{table:asymptotics} coincide except for the case when $\infspec > 0$ in the regime $T \to \infty$.
\par
Let us now assume $d_1 > 0$.
The lower bounds are consistent with Theorem~\ref{thm:lower_bound_control_cost} and cannot be improved in general, as Remark~\ref{rem:lower-sharp} shows.
In the large time regime, the upper and lower bounds exhibit qualitatively the same asymptotic behavior except for the case when $\infspec > 0$.
The different asymptotic behavior which we observe in the small time regime cannot be avoided.
In fact, there exist examples where the exponential blowup of the type $\exp (C T^{-\gamma/(1 - \gamma)})$ indeed occurs, see e.g.\ \cite{FernandezZ-00,Miller-04}. They consider the controlled heat equation with control in a subset of the domain, see Section~\ref{sec:applications} for details on the controlled heat equation. Note that this example corresponds to $\gamma = 1/2$.
This shows in particular that the upper bounds in Table~\ref{table:asymptotics} are sharp in this regime.
\end{remark}
\begin{remark} \label{remark:full_control}
 Let $X = U$, $\beta = 0$ and $B = I$.
 In this case one can explicitly construct null-control functions in time $T > 0$. We give two examples.
 The first one is given by
 \begin{equation*}
  u_1 (t)
  =
  \int_{\infspec}^\infty f_T(\lambda) \drm P_A(\lambda) w_0,
  \quad
  \text{where}
  \quad
  f_T(\lambda)
  =
  \begin{cases}
   - T^{-1}\quad &\text{if $\lambda = 0$},\\
   \frac{- \lambda}{\euler^{\lambda T} - 1} &\text{if $\lambda \neq 0$}.
   \end{cases}
  \end{equation*}
  The second one is given by
   \begin{equation*}
  u_2 (t)
  =
  \int_{\infspec}^\infty \euler^{\lambda t} g_T(\lambda) \drm P_A(\lambda) w_0,
  \quad
  \text{where}
  \quad
  g_T(\lambda)
  =
  \begin{cases}
   - T^{-1}\quad &\text{if $\lambda = 0$},\\
   \frac{- 2 \lambda}{\euler^{2 \lambda T} - 1} &\text{if $\lambda \neq 0$}.
   \end{cases}
  \end{equation*}
 The fact that $u_1$ and $u_2$ are null-control functions in time $T$ follows from the Duhamel formula~\eqref{eq:Duhamel_formula} and spectral calculus.
 Note that $u_1$ is time-independent while $u_2$ is time-dependent.
 Moreover, it follows that
 $\cost \leq \lVert u_i \rVert_{L^2([0,T],X)}$, $i \in \{1 , 2 \}$.
 We estimate
 \begin{equation}
  \label{eq:Cobs_upper_bound_explicit_feedback_u_1}
 \cost^2
 \leq
 \lVert u_1 \rVert_{L^2([0,T], U)}^2
 \leq
  \begin{cases}
    T^{-1} & \text{if $\infspec = 0$},\\
    T   \big\lvert \frac{\infspec}{\euler^{\infspec T} - 1} \big\rvert^2 & \text{if $\infspec \neq 0$}
  \end{cases}
 \end{equation}
 and
 \begin{equation}
  \label{eq:Cobs_upper_bound_explicit_feedback_u_2}
  \cost^2
  \leq
  \lVert u_2 \rVert_{L^2([0,T],X)}^2
  \leq
    \begin{cases}
   T^{-1}\quad &\text{if $\infspec = 0$},\\
   \frac{2 \infspec}{\euler^{2 \infspec T} - 1} &\text{if $\infspec \neq 0$}.
   \end{cases}
 \end{equation}
 Since the upper bound in~\eqref{eq:Cobs_upper_bound_explicit_feedback_u_2} coincides with the lower bound in Theorem~\ref{thm:lower_bound_control_cost}, we conclude that $u_2$ is the (unique) null-control function in time $T$ with minimal norm.
 Furthermore, the inequalities in~\eqref{eq:Cobs_upper_bound_explicit_feedback_u_2} are actually equalities.
\par
 If $\infspec = 0$ the bounds in \eqref{eq:Cobs_upper_bound_explicit_feedback_u_1} and~\eqref{eq:Cobs_upper_bound_explicit_feedback_u_2} coincide.
 Hence, in this case, the optimal null-control function in time $T$ is a time-independent function.
\par
 We also see that for certain choices of $T$ and $\infspec$, there is a constant-in-time null-control function in time $T$ with norm which is close to the optimal one.
 This is related to the so-called turnpike property, see, for example \cite{TrelatZZ18}.
\end{remark}
\section{Proofs of Theorems~\ref{thm:obs_and_control} and  \ref{cor:upper_bounds_control_cost}}
\label{sec:proofs}

\begin{proof}[Proof of Theorem~\ref{thm:obs_and_control}]
Let $T > 0$.
For $\phi \in X\subset X_\beta$, $t \in (0,T]$, and $\lambda > 0$ we use the notation
 \begin{align*}
   F (t) &= \bigl\lVert \euler^{-At} \phi \bigr\rVert^2 ,
 & F_\lambda (t) &= \bigl\lVert \euler^{-At} P_A (\lambda) \phi \bigr\rVert^2 ,
 & F_\lambda^\perp (t) &= \bigl\lVert \euler^{-At} (I - P_A (\lambda)) \phi \bigr\rVert^2 , \\
   G (t) &= \bigl\lVert B^* \euler^{-At} \phi \bigr\rVert_U^2 ,
 & G_\lambda (t) &= \bigl\lVert B^* \euler^{-At} P_A (\lambda) \phi \bigr\rVert_U^2 ,
 & G_\lambda^\perp (t) &=\bigl\lVert B^* \euler^{-At} (I - P_A (\lambda)) \phi \bigr\rVert_U^2 .
 \end{align*}
Since $A \geq 0$ we have $F (t_1) \geq F (t_2)$, $F_\lambda (t_1) \geq F_\lambda (t_2)$, and $F_\lambda^\perp (t_1) \geq F_\lambda^\perp (t_2)$ if $t_1 \leq t_2$ and $\lambda > 0$. By monotonicity and our assumption \eqref{eq:UCP}, we obtain for all $t \in (0,T]$ and all $\lambda > 0$
\begin{equation} \label{eq:Flambda}
 F_\lambda (t)
 =    \frac{2}{t} \int_{t/2}^t F_\lambda (t) \drm \tau
 \leq \frac{2}{t} \int_{t/2}^t F_\lambda (\tau) \drm \tau
 \leq \frac{2 d_0 \euler^{d_1 \lambda^\gamma}}{t}  \int_{t/2}^t G_\lambda (\tau) \drm \tau .
\end{equation}
By spectral calculus and since $\lVert B^* \rVert_{\mathcal{L} (X_{-\beta} , U)} = \lVert B \rVert_{\mathcal{L} (U , X_\beta)}$ we have
 \begin{align}\label{eq:spectral_calculus}
  G_\lambda^\perp (t)
  &\leq \lVert B \rVert_{\cL (U , X_{\beta})}^2 \lVert \euler^{-At} (I-P_A (\lambda)) \phi \rVert_{X_{-\beta}}^2 \nonumber \\
  &= \lVert B \rVert_{\cL (U , X_{\beta})}^2 \lVert (I + A^2)^{-\beta / 2} \euler^{-At} (I-P_A(\lambda)) \phi \rVert^2 \nonumber \\
  & =  \lVert B \rVert_{\cL (U , X_{\beta})}^2  \int_\lambda^\infty (1+\mu^2)^{-\beta} \euler^{- 2\mu t} \drm \lVert P_A (\mu) \phi \rVert^2 .
  \end{align}
Note that this justifies that
$\euler^{-At} (I-P_A (\lambda)) \phi$ is indeed in $X_{-\beta}$.
Recall that $\beta < 0$. Let $\Theta > 0$ to be specified later. For $\mu,t > 0$ we estimate
\[
(1+\mu^2)^{-\beta} \euler^{- \mu t}  \leq \left(1+ \left(-\frac{2\beta}{t} \right)^2 \right)^{-\beta} \leq \exp \left( \frac{C_\Theta}{t^{\Theta}} - \beta  \right), \quad C_\Theta = 2^{\Theta} (-\beta)^{\Theta + 1}  \left(\frac{2+\Theta}{\Theta} \right),
\]
where the first inequality follows by maximizing with respect to $\mu$, and the second one follows from the inequality $\ln (1+x) \leq (2/\Theta + 1) x^{\Theta/2} + 1$ for $x \geq 0$.
Hence,
  \begin{align} \label{eq:GlambdaPerp}
   G_\lambda^\perp (t)& \leq \lVert B \rVert_{\cL (U , X_{\beta})}^2 \int_\lambda^\infty  \euler^{C_\Theta / t^\Theta-\beta- \mu t} \drm \lVert P_A (\mu) \phi \rVert^2
  \leq  \lVert B \rVert_{\cL (U , X_{\beta})}^2 \euler^{C_\Theta / t^\Theta - \beta -\lambda t/2} F (t / 2) .
 \end{align}
 Similarly we find
 \[
 F_\lambda^\perp (t) = \int_\lambda^\infty \euler^{-2 \mu t}  \drm \lVert P_A (\mu) \phi \rVert^2
  \leq \euler^{- 3 \lambda t/2}\int_\lambda^\infty \euler^{-\mu t/2}  \drm \lVert P_A (\mu) \phi \rVert^2
 \leq \euler^{- 3\lambda t/2} F (t/4) .
 \]
>From the last inequality and Ineq.~\eqref{eq:Flambda} we obtain
\[
 F (t) = F_\lambda (t) + F_\lambda^\perp (t) \leq  \frac{2 d_0 \euler^{d_1 \lambda^\gamma}}{t}  \int_{t/2}^t G_\lambda (\tau) \drm \tau
 +  \euler^{-3\lambda t / 2} F (t / 4) .
\]
Since $G_\lambda (t) \leq 2(G_\lambda^\perp (t) + G (t))$ and by Ineq.~\eqref{eq:GlambdaPerp}
we obtain for all $t \in (0,T]$ and all $\lambda > 0$
\begin{align*}
 F (t) &\leq \frac{4 d_0 \euler^{d_1 \lambda^\gamma}}{t}  \int_{t/2}^t (G_\lambda^\perp (\tau) + G (\tau)) \drm \tau
 +  \euler^{-3\lambda t / 2} F (t / 4) \\
&\leq \frac{4 d_0 \euler^{d_1 \lambda^\gamma}}{t}  \int_{t/2}^t  G (\tau) \drm \tau
+\frac{4 d_0 \euler^{-\beta} \euler^{d_1 \lambda^\gamma} \lVert B \rVert_{\cL (U,X_\beta)}^2}{t}    \int_{t/2}^t \frac{F (\tau / 2)}{\euler^{\lambda \tau / 2 - C_\Theta / t^\Theta}}   \drm \tau
 +  \frac{F (t /4) }{\euler^{3\lambda t / 2}} .
 \end{align*}
Since $F (\tau / 2) \leq F (t / 4)$, $\euler^{-\lambda \tau / 2} \leq \euler^{- \lambda t / 4}$, and $\euler^{C_\Theta / \tau^{\Theta}} \leq \euler^{2^\Theta C_\Theta / t^\Theta}$ for $\tau \geq t/2$, we obtain
 \begin{align*}
 F (t)
 &\leq
 \frac{4 d_0 \euler^{d_1 \lambda^\gamma}}{t}
 \int_{t/2}^t  G (\tau) \drm \tau
  +\euler^{-\lambda t / 4 + 2^\Theta C_\Theta /t^\Theta}
  \left( 2  d_0 \euler^{-\beta} \euler^{d_1 \lambda^\gamma} \lVert B \rVert_{\cL (U,X_\beta)}^2
    + 1
  \right)
 F (t / 4) \\
 &\leq
  \frac{4 d_0 \euler^{d_1 \lambda^\gamma}}{t}
  \int_{t/2}^t  G (\tau) \drm \tau
  +
  \euler^{- \lambda t / 4 + 2^\Theta C_\Theta / t^\Theta +d_1 \lambda^\gamma}
  \left(
    2 d_0 \euler^{-\beta} \lVert B \rVert_{\cL (U,X_\beta)}^2
    +
    1
 \right) F (t / 4) .
\end{align*}
With the notation
\begin{align*}
 D_1 (t , \lambda)
 &=
 \frac{4 d_0 \euler^{d_1 \lambda^\gamma}}{t}  \int_{t/2}^t  G (\tau) \drm \tau , \\
 \intertext{and}
 D_2 (t , \lambda)
 &=
 \euler^{- \lambda t / 4 + 2^\Theta C_\Theta / t^\Theta +d_1 \lambda^\gamma}
  \left(
    2 d_0 \euler^{-\beta} \lVert B \rVert_{\cL (U,X_\beta)}^2
    +
    1
 \right)
\end{align*}
we can summarize that for all $t \in (0,T]$ we have
\begin{equation} \label{eq:before_iterating}
F (t) \leq D_1 (t , \lambda) + D_2 (t , \lambda) F (t / 4).
\end{equation}
This inequality can be iterated. For $k \in \NN_0$ let $\lambda_k = \nu \alpha^k$ with $\nu > 0$ and $\alpha > 1$ to be specified later.
In particular, applying Ineq.~\eqref{eq:before_iterating} with $t = T$ and $\lambda = \lambda_0$ at the first place, the term $F (4^{-1}T)$ on the right hand side can then be estimated by Ineq.~\eqref{eq:before_iterating} with $t = 4^{-1} T$ and $\lambda = \lambda_1$. This way, we obtain after two steps
\begin{align*}
 F (T) &\leq  D_1 (T , \lambda_0) + D_2 (T , \lambda_0) \left(  D_1 (4^{-1} T , \lambda_1) + D_2 (4^{-1}T , \lambda_1) F (4^{-2}T) \right) \\
 & =  D_1 (T , \lambda_0) + D_1 (4^{-1}T , \lambda_1) D_2 (T , \lambda_0) + D_2 (T , \lambda_0) D_2 (4^{-1}T , \lambda_1) F (4^{-2}T ) .
\end{align*}
After $N + 1$ steps of this type we obtain
\begin{equation} \label{eq:after_iteration}
 F (T) \leq D_1 (T , \lambda_0) + \sum_{k=1}^N D_1 (4^{-k}T , \lambda_k) \prod_{l = 0}^{k-1} D_2 (4^{-l}T , \lambda_l)
 + F (4^{-N-1} T) \prod_{k=0}^N D_2 (4^{-k}T, \lambda_k) .
\end{equation}
In order to study the limit $N \to \infty$, we assume that $4^{\Theta+1} \leq \alpha$, $\alpha^\gamma \leq \alpha / 4$, and
$
 \nu T
 >
 2^{\Theta+2} C_\Theta T^{-\Theta}
 +
 d_1 \nu^\gamma \alpha.
$
This ensures that the constants
\begin{equation}\label{eq:Ki}
K_1 = 2 d_0 \euler^{-\beta} \lVert B \rVert_{\cL (U,X_\beta)}^2 + 1 ,
\quad
K_2 =  \nu  T / 4 - 2^\Theta C_\Theta / T^\Theta - d_1 \nu^\gamma
,
\quad
K_3 = \frac{K_2}{\alpha / 4 - 1} - d_1 \nu^\gamma
\end{equation}
are positive.
Then we have that
\begin{align}
\label{eq:last_term}
 \prod_{k=0}^N D_2 (4^{-k} T , \lambda_k)
 &=
 K_1^{N+1} \prod_{k=0}^N  \euler^{- \nu (\alpha / 4)^k T / 4 + 2^\Theta C_\Theta 4^{\Theta k} / T^\Theta +d_1 \nu^\gamma(\alpha^\gamma)^k}  \nonumber \\
 &\leq
 K_1^{N+1} \prod_{k=0}^N  \euler^{(\alpha / 4)^k ( - \nu  T / 4 + 2^\Theta C_\Theta / T^\Theta +d_1 \nu^\gamma)}
 =
K_1^{N+1} \prod_{k=0}^N  \euler^{- K_2 (\alpha / 4)^k }.
\end{align}
Since $K_1 , K_2 > 0$ and $\alpha > 4$ this tends to zero as $N$ tends to infinity. From Ineq.~\eqref{eq:last_term} and the definitions of $D_1 (4^{-k} T , \lambda_k)$ and $K_3$, we infer that the middle term of the right hand side of Ineq.~\eqref{eq:after_iteration} obeys the upper bound
\begin{align} \label{eq:middle_term}
 \sum_{k=1}^N   & D_1 (4^{-k} T , \lambda_k)  \prod_{l = 0}^{k-1} D_2 (4^{-l} T , \lambda_l)
  \\ &\leq \int_{0}^T  G (\tau) \drm \tau \sum_{k=1}^N \frac{4^{k+1} d_0 \exp(d_1 \nu^\gamma (\alpha / 4)^{k})}{T}   K_1^{k}
 \exp \left( - K_2 \frac{(\alpha / 4)^k - 1}{\alpha / 4 - 1}   \right) \nonumber \\
  & = \int_{0}^T  G (\tau) \drm \tau \frac{4 d_0}{T} \exp\left( \frac{K_2}{\alpha / 4 - 1} \right) \sum_{k=1}^N \left(4 K_1 \right)^{k}
  \exp\left( - K_3 (\alpha / 4)^k \right) .
\end{align}
Letting $N$ tend to infinity we obtain from Ineqs.~\eqref{eq:after_iteration}, \eqref{eq:last_term} and \eqref{eq:middle_term} that
\begin{equation*}
 \bigl\lVert \euler^{-AT} \phi \bigr\rVert^2
 \leq
 \tilde C_{\mathrm{obs}}^2
 \int_0^T    \bigl\lVert B^* \euler^{-At} \phi \bigr\rVert_U^2 \drm t ,
\end{equation*}
where
\begin{equation} \label{eq:Cobs}
 \tilde C_{\mathrm{obs}}^2 =
\frac{4 d_0 \euler^{d_1 \nu^\gamma}}{T} +
\frac{4 d_0}{T} \exp\left( \frac{K_2}{\alpha / 4 - 1} \right) \sum_{k=1}^\infty \left( 4K_1 \right)^{k}  \exp\left( - K_3 (\alpha / 4)^k \right) .
\end{equation}
We choose $\Theta$, $\alpha$ and $\nu$ as in \eqref{eq:parameters} and conclude the observability inequality~\eqref{eq:obs} from Lemma~\ref{lem:Cobs}.
\par
Since \eqref{eq:obs} corresponds to part (b) of Lemma~\ref{lem:absop} with
 $\cX = \euler^{-AT}\colon X \to X$ and $\cY = \cB^T \colon L^2([0,T], U) \to X$.
the system is null-controllable in time $T$. By the definition of $\cost$ we have $\cost \leq C_\obs$.
\end{proof}

\begin{lemma}\label{lem:Cobs}
Let $d_0 > 0$ , $d_1 \geq 0$, $\gamma \in (0,1)$, $T > 0$,
\begin{equation}\label{eq:parameters}
  \Theta
  =
  \frac{\gamma^2}{1 - \gamma},
  \quad
  \alpha
  =
  8 \cdot 4^{\frac{1}{1 - \gamma}},
  \quad\text{and}\quad
  \nu
  =
  \left( \frac{\alpha d_1}{T} + \frac{D}{T^{1- \gamma}} + \frac{E}{T} \right)^{\frac{1}{1 - \gamma}},
 \end{equation}
 where
 \begin{equation*}
  D
  =
   (3 \alpha \ln (4K_1))^{1- \gamma}
  , \quad
  E
  =
  \left(
  \frac{8 \cdot 2^\Theta C_\Theta}{D}
  \right)^{\frac{1 - \gamma}{\gamma}}
  ,
  \quad
  C_\Theta = 2^\Theta (-\beta)^{\Theta + 1} \left(\frac{2+\Theta}{\Theta} \right) ,
\end{equation*}
 and $K_1 = 2 d_0 \euler^{-\beta} \lVert B \rVert_{\cL (U, X_\beta)}^2 + 1$.
Then we have $4^{\Theta+1} \leq \alpha$, $\alpha^\gamma \leq \alpha / 4$, and
 $ \nu T
 >
 2^{\Theta+2} C_\Theta T^{-\Theta}
 +
 d_1 \nu^\gamma \alpha$.
Moreover, for all $T>0$ the constant $\tilde C_{\mathrm{obs}}^2$ from \eqref{eq:Cobs} satisfies
\[
  \tilde C_{\mathrm{obs}}^2
  \leq
  \frac{C_1 d_0}{T}
  K_1^{C_2}
  \exp \left(  C_3
   \left( \frac{d_1 + (-\beta)^{C_4}}{T^\gamma} \right)^{\frac{1}{1 - \gamma}} \right) .
\]
Here, $C_i > 0$, $i \in \{1,2,3,4\}$, are constants depending only on $\gamma$.
They are explicitly given by Eq.~\eqref{eq:final_constant_lemma}.
\end{lemma}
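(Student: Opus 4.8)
The plan is to verify the three parameter inequalities first — these are short and elementary — and then to estimate the two summands of $\tilde C_{\mathrm{obs}}^2$ in \eqref{eq:Cobs}, where essentially all the difficulty lies in tracking which quantity multiplies which.

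\emph{The parameter inequalities.} Since $\Theta+1=(\gamma^2-\gamma+1)/(1-\gamma)$ and $\alpha=8\cdot 4^{1/(1-\gamma)}$, the inequality $4^{\Theta+1}\le\alpha$ reduces after taking $\log_4$ and clearing $1-\gamma>0$ to $\gamma^2+\tfrac12\gamma-\tfrac32=(\gamma-1)(\gamma+\tfrac32)\le0$, which holds for $\gamma\in(0,1)$; similarly $\alpha^\gamma\le\alpha/4$ is equivalent to $4\le\alpha^{1-\gamma}=8^{1-\gamma}\cdot4$. For the third inequality I use that $\nu$ was chosen so that $\nu^{1-\gamma}T=\alpha d_1+DT^\gamma+E$, whence $\nu T=\nu^\gamma(\alpha d_1+DT^\gamma+E)$ and the claim becomes $\nu^\gamma(DT^\gamma+E)>2^{\Theta+2}C_\Theta T^{-\Theta}$. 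Now $\nu^{1-\gamma}\ge E/T$ gives $\nu^\gamma\ge(E/T)^{\gamma/(1-\gamma)}$, and since $\gamma-\gamma/(1-\gamma)=-\Theta$ this yields $\nu^\gamma DT^\gamma\ge E^{\gamma/(1-\gamma)}DT^{-\Theta}=2^{\Theta+3}C_\Theta T^{-\Theta}$ by the very definition $E=(2^{\Theta+3}C_\Theta/D)^{(1-\gamma)/\gamma}$ (when $\beta=0$ this is trivial since $C_\Theta=E=0$); this is more than enough.

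\emph{Estimating $K_2$ and $K_3$.} Substituting $\nu T=\nu^\gamma(\alpha d_1+DT^\gamma+E)$ into \eqref{eq:Ki} and discarding the non-positive terms $-2^\Theta C_\Theta T^{-\Theta}$ and $-d_1\nu^\gamma$ gives $K_2\le\nu T/4$, while a short rearrangement yields the identity $K_3=\bigl(\nu^\gamma DT^\gamma/4-2^\Theta C_\Theta T^{-\Theta}+\nu^\gamma E/4\bigr)/(\alpha/4-1)$. Splitting $\nu^\gamma DT^\gamma/4$ into two equal halves, on one half I use $\nu^\gamma DT^\gamma/8\ge2^\Theta C_\Theta T^{-\Theta}$ from the previous paragraph, so that half absorbs $-2^\Theta C_\Theta T^{-\Theta}$; on the other half I use $\nu^{1-\gamma}\ge D/T^{1-\gamma}$, hence $\nu^\gamma DT^\gamma\ge D^{1/(1-\gamma)}=3\alpha\ln(4K_1)$. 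Since $\alpha/4-1<\alpha/4$ this leaves $K_3\ge\tfrac32\ln(4K_1)$, i.e.\ $\euler^{-K_3}\le(4K_1)^{-3/2}$.

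\emph{Estimating $\tilde C_{\mathrm{obs}}^2$.} In the series in \eqref{eq:Cobs} we get $\euler^{-K_3(\alpha/4)^k}\le(4K_1)^{-(3/2)(\alpha/4)^k}$, so its $k$-th term is at most $(4K_1)^{k-(3/2)(\alpha/4)^k}$; because $\alpha/4\ge8$ the exponent is $\le-8^{k-1}$ and, as $K_1\ge1$, the whole series is bounded by the absolute constant $\sum_{k\ge1}4^{-8^{k-1}}<1$. It remains to control $\euler^{d_1\nu^\gamma}$ and $\euler^{K_2/(\alpha/4-1)}\le\euler^{\nu T/(\alpha-4)}$. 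Expanding $\nu^\gamma=(\nu^{1-\gamma})^{\gamma/(1-\gamma)}$ and $\nu T=T(\nu^{1-\gamma})^{1/(1-\gamma)}$ from $\nu^{1-\gamma}=\alpha d_1/T+D/T^{1-\gamma}+E/T$ and using the elementary inequality $(a+b+c)^p\le C_\gamma(a^p+b^p+c^p)$ produces summands of three types: (i) a multiple of $d_1^{1/(1-\gamma)}T^{-\gamma/(1-\gamma)}$; (ii) a multiple of $D^{1/(1-\gamma)}=3\alpha\ln(4K_1)$, resp.\ of $d_1D^{\gamma/(1-\gamma)}T^{-\gamma}$, the latter becoming a multiple of $\ln(4K_1)$ after a Young inequality $xy\le\epsilon\,x^{1/(1-\gamma)}+C_\epsilon y^{1/\gamma}$ that peels off a further $\epsilon\,d_1^{1/(1-\gamma)}T^{-\gamma/(1-\gamma)}$; and (iii), using $D\ge(3\alpha)^{1-\gamma}$ (valid since $K_1\ge1$ forces $\ln(4K_1)>1$) to bound $E^{1/(1-\gamma)}$ and $C_\Theta/D$, a multiple of $(-\beta)^{(\Theta+1)/\gamma}T^{-\gamma/(1-\gamma)}$, resp.\ $d_1(-\beta)^{\Theta+1}T^{-\gamma/(1-\gamma)}$, again split by a Young inequality. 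Exponentiating, the type-(ii) contributions become powers of $4K_1$; setting $C_4=(1-\gamma)(\Theta+1)/\gamma=\gamma+1/\gamma-1$ so that $(-\beta)^{(\Theta+1)/\gamma}=((-\beta)^{C_4})^{1/(1-\gamma)}$, and using $x^p+y^p\le(x+y)^p$ for $p=1/(1-\gamma)\ge1$, the type-(i) and type-(iii) contributions combine into $C_3\bigl((d_1+(-\beta)^{C_4})/T^\gamma\bigr)^{1/(1-\gamma)}$ in the exponent. Collecting the powers of $K_1$, the prefactor $d_0/T$, and the numerical constants (which depend only on $\gamma$) gives the asserted bound, the accumulated values being exactly \eqref{eq:final_constant_lemma}.

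\emph{Main obstacle.} The only genuine work is the bookkeeping in the last paragraph: one must guarantee that no residual factor of $d_1$ ever ends up multiplied by a power of $(-\beta)$ or $K_1$, and that no residual $(-\beta)$ or $\ln(4K_1)$ is ever multiplied by a negative power of $T$ other than the admissible scaling $T^{-\gamma/(1-\gamma)}$. The particular choices of $D$ and $E$ in the lemma are engineered precisely so that the Young splits close and so that $K_3$ stays bounded below by a fixed multiple of $\ln(4K_1)$, which is what renders the series harmless.
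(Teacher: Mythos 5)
Your proposal is correct and follows essentially the same route as the paper's proof: verify the parameter inequalities from the definition of $\nu$, bound $K_3$ from below by a fixed multiple of $\ln(4K_1)$ so that the series in \eqref{eq:Cobs} collapses to an absolute constant, bound $K_2/(\alpha/4-1)$ by a multiple of $\nu T$, and expand $\nu$ so that the $D$-contribution becomes a power of $K_1$ and the $d_1$- and $E$-contributions form the exponential. The only deviations are cosmetic bookkeeping: you sum the series by direct term-by-term domination where the paper invokes its auxiliary inequality \eqref{eq:estimating_ABC}, and you expand $\euler^{d_1\nu^\gamma}$ via Young's inequality where the paper absorbs it using $d_1\nu^\gamma\le d_1\nu^\gamma+K_3=K_2/(\alpha/4-1)$, so your explicit $C_4$ differs numerically from the one read off \eqref{eq:final_constant_lemma} (your exponent $(\Theta+1)/\gamma$ is in fact the one the computation actually produces), which is immaterial for the lemma and its applications.
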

\begin{proof}
It is easy to see that $4^{\Theta+1} \leq \alpha$, and $\alpha^\gamma \leq \alpha / 4$.
For the constant $K_3$ from \eqref{eq:Ki} we have
 \begin{align*}
  K_3
  &=
  \frac{\nu T/4 - 2^\Theta C_\Theta/T^\Theta - d_1 \nu^\gamma \alpha / 4}{ ( \alpha / 4 - 1)} \\
   &=
   \frac{\nu^\gamma}{\alpha - 4}
   \left[
   	\left( \frac{\alpha d_1}{T} + \frac{D}{T^{1- \gamma}} + \frac{E}{T} \right) T
   	-
   	\frac{4 \cdot 2^\Theta C_\Theta}{T^\Theta}
   	\left( \frac{\alpha d_1}{T} + \frac{D}{T^{1- \gamma}} + \frac{E}{T} \right)^{-\frac{\gamma}{1 - \gamma}}
   	- d_1 \alpha
   \right]\\
   &
   \geq
   \frac{\nu^\gamma}{\alpha - 4}
   \left[
   	D T^\gamma
   	-
   	\frac{4 \cdot 2^\Theta C_\Theta}{T^\Theta}
   	\frac{E^{\frac{-\gamma}{1 - \gamma}}}{T^{\frac{-\gamma}{1 - \gamma}}}
   \right]
   = \frac{\nu^\gamma D T^\gamma}{2(\alpha - 4)}.
 \end{align*}
This shows in particular that $ \nu T > 2^{\Theta+2} C_\Theta T^{-\Theta} + d_1 \nu^\gamma \alpha$.
We further estimate
\[
	K_3
	\geq
	\frac{\left( \frac{D}{T^{1-\gamma}} \right)^{\gamma / (1-\gamma)} D T^\gamma}{2(\alpha - 4)}
	=
	\frac{D^{1/(1-\gamma)}}{2 (\alpha - 4)} .
\]
For the constant $K_2$ from \eqref{eq:Ki} we estimate using $\alpha \geq 8$
 \begin{align*}
  \frac{K_2}{\alpha / 4 - 1}
  &
  \leq \nu T / 2
  =
  \frac{T}{2}
  \left(
  	\frac{\alpha d_1}{T} + \frac{D}{T^{1- \gamma}} + \frac{E}{T}
  \right)^{\frac{1}{1 - \gamma}}
  \leq
  \frac{\alpha^{\frac{1}{1 - \gamma}}}{2}
  \left(
  	\frac{\alpha d_1  + E }{T^\gamma} + D
  \right)^{\frac{1}{1 - \gamma}} .
 \end{align*}
Let us now note that for all $A > 1$, and $B > 0$ we have
 \begin{equation}
 \label{eq:estimating_ABC}
  \sum_{k = 1}^\infty A^k \euler^{- B 2^k}
  \leq
  \left( \frac{2 \ln A}{B \euler \ln 2 } \right)^{\frac{\ln A}{\ln 2}}
  \frac{1}{B} ,
\end{equation}
since
\[
  \sum_{k = 1}^\infty \euler^{- \frac{B}{2} 2^k}
 \leq
  \sum_{k = 1}^\infty \euler^{- k B}
 =
  \frac{\euler^{- B}}{1 - \euler^{- B}}
 =
  \frac{1}{\euler^{B} - 1}
 \leq
  \frac{1}{B}
 \]
 and
\[
  \sum_{k = 1}^\infty A^k \euler^{- B 2^k}
 \leq
  \sup_{x \geq 1} ( A^x \euler^{- \frac{B}{2} 2^x} )
  \sum_{k = 1}^\infty \euler^{- \frac{B}{2} 2^k}
 =
  \left( \frac{2 \ln A}{B \euler \ln 2} \right)^{\frac{\ln A}{\ln 2}}
  \sum_{k = 1}^\infty \euler^{- \frac{B}{2} 2^k} .
\]
We use $\alpha \geq 8$ and apply Ineq.~\eqref{eq:estimating_ABC} with $A = 4K_1 > 1$, and $B = K_3$ to obtain
  \begin{equation}
   \label{eq:application_estimating_ABC_1}
   \sum_{k=1}^\infty \left(4 K_1\right)^{k}  \exp\left( - K_3 (\alpha / 4)^k \right)
   \leq
   \sum_{k=1}^\infty \left(4 K_1\right)^{k}  \exp\left( - K_3 2^k \right)
   \leq
   \left( \frac{2 \ln (4 K_1)}{ K_3 \euler \ln 2} \right)^{\frac{\ln (4 K_1)}{\ln 2}} \frac{1}{K_3}.
  \end{equation}
  By the above estimate on $K_3$ and since $\alpha > 4$ we find
  \[
   \frac{2 \ln (4K_1 )}{ K_3 \euler \ln 2}
   \leq
   \frac{2}{\euler \ln 2}
     \frac{\ln (4 K_1) 2 (\alpha - 4 )}{D^{1/(1-\gamma)}}
     = \frac{2}{\euler \ln 2}
     \frac{ 2 (\alpha - 4 )}{3 \alpha}
    \leq 1.
  \]
  Note that the exponent $\ln (4 K_1) / \ln 2$ in~\eqref{eq:application_estimating_ABC_1} is positive, and that $D \geq 1$.
  Hence, the right hand side of~\eqref{eq:application_estimating_ABC_1} is bounded from above by $2 (\alpha - 4)$.
  Using this, $\alpha \geq 8$, and $d_1 \nu^\gamma \leq d_1 \nu^\gamma + K_3 = K_2 / (\alpha / 4 - 1)$, we find
  \begin{align*}
   \tilde C_{\mathrm{obs}}^2
   & =
   \frac{4 d_0 \euler^{d_1 \nu^\gamma}}{T} +
\frac{4 d_0}{T} \exp\left( \frac{K_2}{\alpha / 4 - 1} \right) \sum_{k=1}^\infty \left(4 K_1\right)^{k}  \exp\left( - K_3 (\alpha / 4)^k \right) \\
&\leq
   \frac{4 d_0}{T}
   ( 1 + K_3^{-1})
   \exp \left( \frac{K_2}{\alpha / 4 - 1} \right) \\
   &\leq
   \frac{4 d_0}{T} \left( 1 + 2 (\alpha -4) \right)
   \exp \left(  \frac{\alpha^{\frac{1}{1 - \gamma}}}{2}
  \left(
  	\frac{\alpha d_1  + E }{T^\gamma} + D
  \right)^{\frac{1}{1 - \gamma}} \right)  .
  \end{align*}
  Since $(a+b)^x \leq 2^{x-1} (a^x + b^x)$ for $x > 1$ and $a,b \geq 0$ we obtain
  \begin{multline}
   \label{eq:final_constant_lemma}
   \tilde C_{\mathrm{obs}}^2
   \leq
   \frac{4 d_0}{T}
   \left( 1 + 2 (\alpha - 4) \right)
   \left( 4 K_1 \right)^{3 \alpha^{\frac{2 - \gamma}{1 - \gamma}} 2^{2 \Theta + 3}}
 	\\
 	\times
	\exp \left(
		\alpha^{\frac{2}{1 - \gamma}} 4^{\frac{\gamma + \Theta + 2}{1 - \gamma}}
		\left( \frac{\Theta + 2}{\Theta} \right)^{\frac{1}{1 - \gamma}}
		\left( \frac{d_1 + (-\beta)^{\Theta + 1}}{T^\gamma} \right)^{\frac{1}{1 - \gamma}}
		\right).
  \end{multline}
\end{proof}

\begin{proof}[Proof of Theorem~\ref{cor:upper_bounds_control_cost}]
 Since $P_A(\lambda) = P_{A - \infspec}(\lambda - \infspec)$, we have by assumption for all $\lambda \geq 0$ that
 \[
  \lVert P_{A - \infspec} (\lambda) \phi \rVert^2
  \leq
  d_0 \euler^{d_1 \mu^\gamma}
  \lVert B^\ast P_{A - \infspec} (\lambda) \phi \rVert_U^2.
 \]
 Since the operator $A - \infspec$ is non-negative, we obtain from Theorem~\ref{thm:obs_and_control} the observability estimate
 \[
  \bigl\lVert \euler^{-(A - \infspec) T} \phi \bigr\rVert^2
  \leq
  \const^2
  \int_0^T  \bigl\lVert B^* \euler^{- (A - \infspec) t} \phi \bigr\rVert_U^2 \drm t
  =
  \const^2
  \int_0^T \euler^{2 \infspec t}  \bigl\lVert B^* \euler^{- A t} \phi \bigr\rVert_U^2 \drm t,
 \]
 where $\const$ is as in Theorem~\ref{thm:obs_and_control}.
 Dividing by $\euler^{2 \infspec T}$ yields~\eqref{eq:obs_lambda_0}.
\par
 If $\infspec < 0$, we have for all $t > 0$
 \[
 \bigl\lVert \euler^{-At} \phi \bigr\rVert^2
 \leq
 \const^2
 \euler^{- 2 \infspec t}
 \int_0^t \bigl\lVert B^* \euler^{-A\tau} \phi \bigr\rVert_U^2 \drm \tau.
 \]
 Using the equivalence between observability and null-controllability as in the proof of Theorem~\ref{thm:obs_and_control}, we conclude that the system~\eqref{eq:abstract_control_system} is null-controllable in time $t$ for all $t > 0$ with cost satisfying
 \[
  C_{t}^2
  \leq
  \frac{C_1 d_0}{t}
   K_1^{C_2}
   \exp \left(  C_3
    \left( \frac{d_1 + (- \beta)^{C_4} }{t^\gamma} \right)^{\frac{1}{1 - \gamma}} - 2 \infspec t \right) .
 \]
 Note that this expression grows exponentially as $t$ tends to infinity.
 However, if the system is null-controllable in time $t$ with cost $C_t$, then it is also null-controllable in time $T > t$ with the same cost $\cost = C_t$.
 For any $t \in (0,T]$, we can choose a null-control function in time $t$, and apply no control in $(t, T]$.
 This yields the upper bound in this case.
\par
 The case $\infspec = 0$ is the statement of Theorem~\ref{thm:obs_and_control}.
 If $\infspec > 0$, then we choose any $t \in (0, T)$, apply no control in $[0, t]$ and find
 \[
  \lVert w(t) \rVert
  \leq
  \euler^{- \infspec t}
  \lVert w_0 \rVert.
 \]
 Then, we apply Theorem~\ref{thm:obs_and_control} with initial state $w(t)$ in the time interval $[t,T]$.
\end{proof}
\section{Spectral inequalities and explicit cost for the controlled heat equation}
\label{sec:applications}
In this section, we apply the results from Section~\ref{sec:abstract} to the controlled heat equation with heat generation term on bounded and unbounded domains.
More precisely, our setting is as follows.
\par
Let $d \in \NN$, $\alpha_i ,\beta_i \in \RR\cup \{\pm \infty\}$ with $\beta_i - \alpha_i > 0$, and
\begin{equation}\label{eq:Omega}
 \Omega = \bigtimes_{i =1}^d (\alpha_i , \beta_i) .
\end{equation}
We denote by $-\Delta$ the self-adjoint Laplace operator in $L^2 (\Omega)$ with Dirichlet, Neumann or periodic boundary conditions. Here we allow for periodic boundary conditions only if $\Omega = \Lambda_L = (-L/2,L/2)^d$ for some $L > 0$. Moreover, let $V \in L^\infty(\Omega)$ be real-valued, and
define the self-adjoint operator $H_\Omega$ in $L^2 (\Omega)$ by
\begin{equation*}
 H_\Omega = -\Delta + V .
\end{equation*}
In $L^2 (\Omega)$ we consider the controlled heat equation with heat generation term $(-V)$
\begin{equation} \label{eq:heat_equation}
\dot w + H_\Omega w = \Eins_{S\cap \Omega} u ,
\quad
w(0,\cdot) = w_0 \in L^2(\Omega),
 \end{equation}
where $T > 0$, $w,u \in L^2 ([0,T] \times \Omega)$, and where $S$ is non-empty and measurable, usually given by a $(\rho,a)$-thick set or a $(G , \delta)$-equidistributed set, see below for definitions.
Note that we simultaneously treat bounded and unbounded domains such as $\RR^d$, half-spaces, infinite strips, or hypercubes.
\par
Theorems~\ref{thm:obs_and_control} and \ref{cor:upper_bounds_control_cost} translate spectral inequalities into null-controllability of the corresponding controlled Cauchy problem with explicit estimates on the control cost in all times.
We will now apply them to the case $X = U = L^2 (\Omega)$, $A = H_\Omega = -\Delta + V$, and $B = \Eins_{S \cap \Omega}$. In this setting we have in particular $\beta = 0$, and the spectral inequality reads
\begin{equation} \label{eq:uncertainty}
\forall \lambda > \infspec \ \forall \phi \in L^2(\Omega) \colon \quad
\lVert P_{H_\Omega}(\lambda) \phi \rVert_{L^2 (\Omega)}^2
  \leq
  d_0 \euler^{d_1 (\lambda - \infspec)^\gamma}
  \lVert \Eins_{S \cap \Omega} P_{H_\Omega}(\lambda) \phi \rVert_{L^2 (\Omega)}^2.
\end{equation}
Recall that $\infspec$ denotes the infimum of the spectrum of the operator under consideration, i.e.~here $\infspec=\inf\sigma(H_\Omega)$.
We start by defining two geometric situations for the subset $S \subset \Omega$ where \eqref{eq:uncertainty} is satisfied, cf.\ Fig.~\ref{fig:sets}. For a measurable set $M \subset \RR^d$ we denote by $\lvert M \rvert$ its Lebesgue measure, and for $x \in \RR^d$ and $\rho > 0$ we denote by $B (x , r) = \{y \in \RR^d \colon \lvert x-y \rvert < r\}$ the ball of radius $r$ centered at $x$.
\begin{definition}[Equidistributed set]
 Let $G,\delta> 0$. We say that a set $S \subset \RR^d$ is \emph{$(G,\delta)$-equidistributed} if $S$ is measurable, and
 \[
  \forall  j \in (G\ZZ)^d\ \exists z_j \in \Lambda_G + j \colon
  \quad
  B (z_j , \delta) \subset S \cap (\Lambda_G + j) .
 \]
\end{definition}
\begin{figure}[ht]\centering
\begin{tikzpicture}
\begin{scope}
\clip (-0.2,-0.2) rectangle (5.2,5.2);
\pgfmathsetseed{{\number\pdfrandomseed}}
\draw[dotted] (-.2, -.2) grid (5.2,5.2);
\foreach \x in {0.5,1.5,...,4.5}{
  \foreach \y in {0.5,1.5,...,4.5}{
  	\pgfmathrandominteger{\radius}{1}{10}
    \filldraw[fill=gray!70] (\x+rand*0.35,\y+rand*0.35) circle (0.15+0.02*\radius);
  }
}
\end{scope}
\begin{scope}[xshift=6cm]
  \pgfmathsetmacro{\X}{7};
  \pgfmathsetmacro{\Y}{4};
  \pgfmathsetmacro{\e}{4};
  \pgfmathsetmacro{\d}{4};
  \foreach \y in {0,...,\Y}{
	\foreach \x in {0,...,\X}{
		\pgfmathsetmacro{\A}{random(1,\e)}; 
		\pgfmathsetmacro{\a}{\A^2-1};
                \pgfmathsetmacro{\B}{random(1,\d)};  
		\pgfmathsetmacro{\b}{\B^2-1};
		\pgfmathsetmacro{\Lo}{1/\A^2}; 
		\pgfmathsetmacro{\Lv}{1/\B^2}; 


		\foreach \i in {0,..., \a}{
			  \pgfmathsetmacro{\v}{\x +(\i *\Lo)};
			  \pgfmathsetmacro{\V}{\x +(\i +1)*\Lo};
			  \foreach \j in {0,...,\b}{
				\pgfmathsetmacro{\W}{\y + (\j +1)*\Lv};
				\pgfmathsetmacro{\w}{\y +(\j *\Lv)};
						\pgfmathsetmacro{\test}{\j +\i};
						\ifthenelse{\isodd{\test}}{
						}{\filldraw[black!40] (\v,\w) rectangle (\V, \W);}
			}
		}
		\draw (\x,\y) rectangle (\x+1, \y+1);
	}
  }
\end{scope}
\end{tikzpicture}
\caption{Illustrations of an equidistributed set (left) and a thick set (right).\label{fig:sets}}
\end{figure}
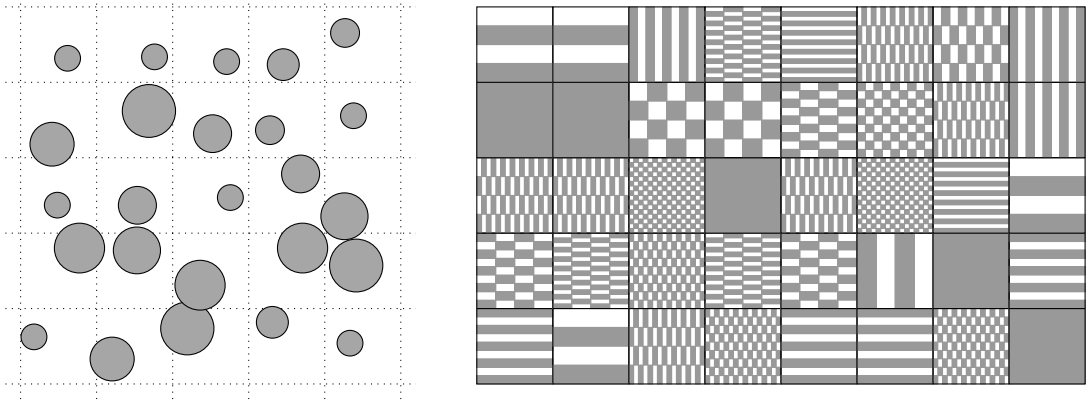
\begin{definition}[Thick set]
 Let $\rho \in (0,1]$ and $a = (a_1 , \ldots , a_d) \in \RR^d$ with $a_j > 0$ for $j \in \{1,\ldots , d\}$. We say that a set $S\subset \RR^d$ is \emph{$(\rho , a)$-thick} if $S$ is measurable, and for each parallelepiped
 \[
 P = \bigtimes_{j=1}^d \left[x_j - \frac{a_j}{2} , x_j + \frac{a_j}{2} \right]  \quad \text{with} \quad
 x_j \in \RR
 \quad \text{for} \quad
 j \in \{1,\ldots , d\}
 \]
 we have
 \[
  \left\lvert S \cap P \right\rvert \geq \rho \left\lvert P \right\rvert .
 \]
\end{definition}
Note that every $(G,\delta)$-equidistributed set is $(\rho , a)$-thick for some $\rho$ and $a$ but there exist $(\rho , a)$-thick sets which are not $(G,\delta)$-equidistributed for any $G$ and $\delta$.
\par
Now, we cite three spectral inequalities, i.e.\ uncertainty relations for spectral projectors.
\begin{theorem}[\cite{NakicTTV-18,NakicTTV-18+}]
  \label{thm:UCP_NTTV}
Let $G,\delta>0$,
$\Omega$ as in \eqref{eq:Omega} with
$\Lambda_G \subset \Omega$, $S \subset \RR^d$ be $(\delta , G)$-equidistributed, $V \in L^\infty (\Omega)$ real-valued, and $\lambda \in \RR$. Then we have for all $\phi \in L^2(\Omega)$
\begin{equation*}
\lVert P_{H_\Omega}(\lambda) \phi \rVert_{L^2 (S \cap \Omega)}^2
\geq C_\si \lVert P_{H_\Omega}(\lambda) \phi \rVert_{L^2 (\Omega)}^2 ,
\]
where
\[
 C_\si = \sup_{\zeta \in \RR}
 \left(\frac{\delta}{G}\right)^{N \bigl(1 + G^{4/3}\lVert V-\zeta \rVert_\infty^{2/3} + G\sqrt{(\lambda-\zeta)_+} \bigr)} ,
\end{equation*}
$t_+:=\max\{0,t\} $ for $t \in \RR$, and where $N > 0$ is a constant depending only on the dimension. In particular, we have for all $\lambda \geq \infspec$
\[
C_\si \geq d_0 \euler^{d_1 (\lambda - \infspec)^{1/2}}
\quad\text{with}\quad
 d_0 = \left(\frac{\delta}{G}\right)^{N (1 + G^{4/3} \lVert V - \infspec \rVert_\infty^{2/3})}
 \quad\text{and}\quad
 d_1 = N G \ln \left(\frac{\delta}{G} \right).
\]
\end{theorem}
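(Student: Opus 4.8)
The plan is to take the first inequality as given --- it is the scale-free spectral inequality for Schr\"odinger operators established in \cite{NakicTTV-18,NakicTTV-18+} --- and then to carry out the short algebraic reduction to the ``in particular'' claim, so I only sketch the structure of the former. Writing $\psi = P_{H_\Omega}(\lambda)\phi$, one first performs a ``ghost dimension'' lift: for a freely chosen spectral shift $\zeta \in \RR$, spectral calculus produces a function $F$ on the slab $\Omega \times (-1,1)$ with Cauchy data $F|_{t = 0} = 0$, $\partial_t F|_{t = 0} = \psi$ that solves the $(d+1)$-dimensional \emph{elliptic} equation $-\Delta_x F - \partial_t^2 F + (V - \zeta) F = 0$, assembled from the building blocks $\sinh(\sqrt{E - \zeta}\,t)/\sqrt{E - \zeta}$ on the part of the spectrum above $\zeta$ and from oscillatory ones below; since $\psi$ lies in the range of $P_{H_\Omega}(\lambda)$, these blocks grow at most like $\euler^{\sqrt{(\lambda - \zeta)_+}\,|t|}$, which is the source both of the term $\sqrt{(\lambda - \zeta)_+}$ and of the appearance of $\lVert V - \zeta\rVert_\infty$ in place of $\lVert V\rVert_\infty$. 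One then applies a quantitative Carleman (three-ball) estimate for this elliptic operator, relating the mass of $F$ on $B(z_j,\delta) \times (-\delta,\delta)$ to its mass on a ball of radius of order $G$, and chains the local estimates across the translated cubes $\Lambda_G + j$ by an interpolation/telescoping argument; unpacking the Cauchy data then transfers smallness of $\psi$ on each ball $B(z_j,\delta)$ to all of $\Omega$, yielding the stated lower bound for $C_\si$, uniformly in the scale $G$, in the translations $j \in (G\ZZ)^d$, and in $V$ through $\lVert V\rVert_\infty$ only.

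The hard part, and the reason these cited works are needed rather than classical unique continuation, is the \emph{explicit and scale-free} bookkeeping of constants in the Carleman estimate. On a ball of radius $G$, after rescaling to unit size the zero-order coefficient has size $\sim G^2\lVert V - \zeta\rVert_\infty$, and optimizing the Carleman large parameter against it turns this into the exponent $G^{4/3}\lVert V - \zeta\rVert_\infty^{2/3}$, while $G\sqrt{(\lambda - \zeta)_+}$ records the cost of the lift in the $t$-direction over an interval of length comparable to $G$; keeping all of this uniform over the scale and over the translations, with a \emph{multiplicative} rather than additive dependence on the number of cubes, is the technical heart of \cite{NakicTTV-18,NakicTTV-18+}, and I would not reprove it here.

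It then remains to deduce the ``in particular'' statement, for which no further estimate is needed. Fixing $\lambda \geq \infspec$ and evaluating the supremum defining $C_\si$ at the admissible value $\zeta = \infspec = \inf\sigma(H_\Omega)$, and using $(\lambda - \infspec)_+ = \lambda - \infspec$, gives
\[
 C_\si \geq \left(\frac{\delta}{G}\right)^{N\bigl(1 + G^{4/3}\lVert V - \infspec\rVert_\infty^{2/3} + G\sqrt{\lambda - \infspec}\,\bigr)} .
\]
Splitting the exponent as $N\bigl(1 + G^{4/3}\lVert V - \infspec\rVert_\infty^{2/3}\bigr) + NG\sqrt{\lambda - \infspec}$ and using $(\delta/G)^{x} = \euler^{x\ln(\delta/G)}$ factorizes the right-hand side as $d_0\,\euler^{NG\ln(\delta/G)(\lambda - \infspec)^{1/2}}$ with $d_0 = (\delta/G)^{N(1 + G^{4/3}\lVert V - \infspec\rVert_\infty^{2/3})}$; that is, $C_\si \geq d_0\,\euler^{d_1(\lambda - \infspec)^{1/2}}$ with $d_1 = NG\ln(\delta/G)$, as claimed. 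Thus the entire difficulty of Theorem~\ref{thm:UCP_NTTV} is concentrated in the scale-free Carleman argument sketched above, the final step being a one-line identity.
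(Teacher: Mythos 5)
Your proposal is correct and matches the paper's treatment: the main spectral inequality is simply quoted from \cite{NakicTTV-18,NakicTTV-18+} (the paper offers no proof of it either), and the ``in particular'' clause follows exactly as you say by evaluating the supremum at $\zeta = \infspec$, using $(\lambda-\infspec)_+ = \lambda - \infspec$, and splitting the exponent so that $(\delta/G)^{NG\sqrt{\lambda-\infspec}} = \euler^{d_1(\lambda-\infspec)^{1/2}}$ with $d_1 = NG\ln(\delta/G)$. Your sketch of the ghost-dimension/Carleman mechanism behind the cited inequality is accurate but, as you note, not required here.
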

The following result was proven in the $\RR^d$ case in \cite{Kovrijkine-00,Kovrijkine-01} and adapted to cubes and some other geometries in \cite{EgidiV-17-arxiv,EgidiV-18,Egidi-18}.
Such estimates are often called Logvinenko-Sereda Theorems. We do not expound the history of this topic but refer the reader
e.g.\ to the survey \cite{EgidiNSTTV-20}.
\begin{theorem}[{\cite{Kovrijkine-00,EgidiV-17-arxiv,EgidiV-18}}]
\label{thm:Lognivenko-Sereda}
 Let $V = 0$, and $\Omega = \RR^d$ or $\Omega = \Lambda_L$ for some $L > 0$. Let further $S \subset \RR^d$ be a $(\rho , a)$-thick set. If $\Omega = \Lambda_L$ we assume that $a_j \leq L$ for all $j \in \{1,\ldots , d\}$.
 Then we have for all $\phi \in L^2(\Omega)$
 \[
 \lVert P_{H_\Omega}(\lambda) \phi \rVert_{L^2 (\Omega)}^2 \leq d_0 \euler^{d_1 \lambda^{1/2}}  \lVert P_{H_\Omega}(\lambda) \phi \rVert_{L^2 (S \cap \Omega)}^2,
 \]
 where
 \[
 d_0 = \left( \frac{C^d}{\rho}\right)^{Cd}
 \quad\text{and}\quad
 d_1 =  C \lvert a \rvert_1 \ln \left( \frac{C^d}{\rho} \right).
 \]
 Here, $C$ is a universal positive constant.
\end{theorem}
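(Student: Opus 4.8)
The plan is to follow the Logvinenko--Sereda strategy of Kovrijkine, adapted to anisotropic thick sets as in \cite{EgidiV-17-arxiv,EgidiV-18}. Since $V=0$ we have $H_\Omega=-\Delta$, so $\psi:=P_{H_\Omega}(\lambda)\phi$ has, in the case $\Omega=\RR^d$, Fourier transform supported in the ball $B(0,R)$ with $R=\sqrt\lambda$ (a Paley--Wiener function of exponential type $R$), and in the case $\Omega=\Lambda_L$ it is a trigonometric polynomial with frequencies in $B(0,R)$ on the dual lattice. I would first reduce the cube case to the whole space: depending on the boundary condition one extends $\psi$ from $\Lambda_L$ to $\RR^d$ by even reflection (Neumann), odd reflection (Dirichlet) or periodic continuation, which preserves the frequency localization; the hypothesis $a_j\le L$ guarantees that the correspondingly reflected or periodized copy of $S$ is again thick with parameters $(\rho',a)$, $\rho'\simeq\rho$. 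Hence it suffices to establish, for every $\psi\in L^2(\RR^d)$ with $\supp\widehat\psi\subset B(0,R)$ and every $(\rho,a)$-thick $S$,
\[
 \lVert\psi\rVert_{L^2(\RR^d)}^2
 \le
 \Bigl(\tfrac{C^d}{\rho}\Bigr)^{C(d+\lvert a\rvert_1 R)}\lVert\psi\rVert_{L^2(S)}^2 .
\]

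The second ingredient is Bernstein's inequality $\lVert\partial^\alpha\psi\rVert_{L^2(\RR^d)}\le R^{\lvert\alpha\rvert}\lVert\psi\rVert_{L^2(\RR^d)}$ for all multi-indices $\alpha$. Tile $\RR^d$ by axis-parallel boxes $Q_k$, $k\in\ZZ^d$, each a translate of $[0,a_1]\times\dots\times[0,a_d]$; thickness gives $\lvert S\cap Q_k\rvert\ge\rho\lvert Q_k\rvert$ for every $k$. Introduce a weighted local derivative functional, schematically $\Phi(Q)=\sum_\alpha\bigl(c(1+\lvert a\rvert_1 R)\bigr)^{-2\lvert\alpha\rvert}(\alpha!)^{-2}\lVert\partial^\alpha\psi\rVert_{L^2(Q)}^2$, and call $Q_k$ \emph{good} if $\Phi(Q_k)\le A\lVert\psi\rVert_{L^2(Q_k)}^2$ for a suitably large $A=A(d)$. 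Bernstein's inequality, summed over $\alpha$ with $c$ chosen so that the series converges, yields $\sum_k\Phi(Q_k)=\Phi(\RR^d)\le B(d)\lVert\psi\rVert_{L^2(\RR^d)}^2$, so taking $A=2B$ a Chebyshev argument gives $\sum_{k\ \mathrm{bad}}\lVert\psi\rVert_{L^2(Q_k)}^2\le\tfrac12\lVert\psi\rVert_{L^2(\RR^d)}^2$, hence
\[
 \sum_{k\ \mathrm{good}}\lVert\psi\rVert_{L^2(Q_k)}^2\ \ge\ \tfrac12\lVert\psi\rVert_{L^2(\RR^d)}^2 .
\]

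The heart of the proof is the single-good-cube estimate. On a good cube $Q$, the bound $\Phi(Q)\le A\lVert\psi\rVert_{L^2(Q)}^2$ together with Cauchy estimates forces the Taylor coefficients of $\psi$ at the centre of $Q$ to decay geometrically on a scale comparable to $\mathrm{diam}(Q)$, so that, up to a harmless multiplicative constant, $\psi|_Q$ behaves like a polynomial of degree $\sim 1+\lvert a\rvert_1 R$. One then runs a Remez--Tur\'an--Nazarov type inequality: slicing $Q$ into one-dimensional fibres parallel to the axes and using Fubini, the $(\rho,a)$-thickness of $S$ produces --- at the cost of a factor $(C/\rho)^{C}$ per coordinate direction, i.e.\ $(C/\rho)^{Cd}$ altogether --- a large family of fibres on which $S\cap Q$ has relative length $\gtrsim\rho$; applying on each such fibre the one-dimensional inequality (for an analytic function of effective degree $\sim1+a_jR$ on an interval $I$ one has $\int_I|g|^2\le(C/\rho)^{C(1+a_jR)}\int_E|g|^2$ whenever $E\subset I$ has $|E|\ge\rho|I|$) and reassembling yields
\[
 \lVert\psi\rVert_{L^2(Q)}^2\ \le\ \Bigl(\tfrac{C^d}{\rho}\Bigr)^{C(d+\lvert a\rvert_1 R)}\lVert\psi\rVert_{L^2(Q\cap S)}^2 .
\]
Summing over the good cubes and combining with the previous lower bound gives the displayed whole-space inequality with a universal $C$; setting $R=\sqrt\lambda$ and writing $2(C^d/\rho)^{C(d+\lvert a\rvert_1\sqrt\lambda)}=d_0\,\euler^{d_1\sqrt\lambda}$ with $d_0=(C^d/\rho)^{Cd}$ and $d_1=C\lvert a\rvert_1\ln(C^d/\rho)$ finishes the proof.

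I expect the main obstacle to be exactly this single-good-cube step: one has to (i) upgrade the purely $L^2$ good-cube condition to genuine pointwise analytic control with the correct radius of convergence, (ii) pass from the $d$-dimensional positive-measure thickness condition to a one-dimensional one while losing at most one $\rho$-power per dimension, and (iii) keep the degree parameter entering \emph{linearly} in $\lvert a\rvert_1 R$, which is precisely what produces the $\euler^{d_1\sqrt\lambda}$ growth with the stated $d_1$ rather than something worse. By comparison, the reduction of the $\Lambda_L$ case to $\RR^d$, the Bernstein estimate and the good/bad cube bookkeeping are routine once the extension has been fixed carefully — and this is where the assumption $a_j\le L$ is used.
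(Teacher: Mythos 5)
This theorem is not proved in the paper at all: it is quoted from Kovrijkine and from Egidi--Veseli\'c, and your sketch is essentially the road map of those references (Bernstein inequalities, a good/bad cube decomposition, and an analytic Remez--Tur\'an--Nazarov estimate on good cubes). At that level of resolution the plan is right, and your bookkeeping $(C^d/\rho)^{C(d+\lvert a\rvert_1\sqrt{\lambda})}=d_0\euler^{d_1\sqrt{\lambda}}$ does reproduce the stated $d_0,d_1$. However, two of your steps have genuine gaps.

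First, the reduction of $\Omega=\Lambda_L$ to $\RR^d$ does not work as stated. The periodic continuation (or the periodization of the reflected function) of $\psi\in L^2(\Lambda_L)$ is not in $L^2(\RR^d)$ -- both sides of the whole-space inequality become infinite -- and no truncation can be inserted without destroying the compact Fourier support on which the entire argument rests. Even/odd reflection does correctly reduce Neumann/Dirichlet to the \emph{periodic} problem on $\Lambda_{2L}$ (this is what Section~5 of \cite{EgidiV-18} does), but the periodic case then has to be proved as a separate theorem about trigonometric polynomials on the torus; re-running the whole Kovrijkine argument there (Bernstein for trigonometric polynomials, covering of the torus by boxes of sides $a_j$, which is where $a_j\le L$ enters) is precisely the content of \cite{EgidiV-17-arxiv}. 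One must also be careful with your claim that the reflected copy of $S$ is again $(\rho',a)$-thick: a box of sides $a_j$ straddling up to $2^d$ reflected cells need not contain a full box of sides $a_j$ inside any single cell, so this requires an argument, not just an assertion.

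Second, in the single-good-cube step the axis-parallel Fubini slicing has a matching problem. Fubini and Chebyshev do produce a large-measure family of fibres on which $S\cap Q$ has relative length $\gtrsim\rho$, but nothing guarantees that $\psi$ carries a proportionate share of $\lVert\psi\rVert_{L^2(Q)}^2$ on \emph{those same} fibres, and the good-cube condition (an $L^2(Q)$ bound on weighted derivatives) does not localize to individual one-dimensional fibres, so the one-dimensional Remez-type inequality cannot be applied fibrewise with uniform constants. Kovrijkine's actual argument avoids this: one picks a point $x_0\in Q$ where $\lvert\psi\rvert$ is essentially maximal, shows via the good-cube derivative bounds that the restriction of $\psi$ to every segment from $x_0$ to a point $y\in S\cap Q$ extends analytically to a fixed complex neighbourhood with $\log M\lesssim 1+\lvert a\rvert_1\sqrt{\lambda}$, applies the one-dimensional inequality on each such segment, and integrates over $y\in S\cap Q$ in polar coordinates about $x_0$. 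This is exactly the mechanism that keeps the degree parameter linear in $\lvert a\rvert_1\sqrt{\lambda}$ and the loss at $(C^d/\rho)^{C(d+\lvert a\rvert_1\sqrt{\lambda})}$; without it, items (i)--(iii) that you list as "the main obstacle" remain unproved, and they are the theorem.
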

Note that the domain $\Omega$ itself does not enter the constant in the upper bounds neither in Theorem~\ref{thm:UCP_NTTV} nor in Theorem~\ref{thm:Lognivenko-Sereda}.
For this reason we call such  an uncertainty relation \emph{scale-free}.
\begin{remark}
In the case where $\Omega = \RR^d$ Theorem~\ref{thm:Lognivenko-Sereda} has been proven in \cite{Kovrijkine-00} under the assumption that the Fourier transform $\mathcal{F} \phi$ of $\phi$ satisfies
\begin{equation} \label{eq:Fourier_Kov}
  \supp (\mathcal{F} \phi) \subset \bigtimes_{j=1}^d \left[x_j - \frac{b_j}{2} , x_j + \frac{b_j}{2} \right]  \quad \text{for some} \quad
 x_j \in \RR \ \text{and} \ b_j > 0,
 \quad j \in \{1,\ldots , d\} .
 \end{equation}
 Here, $\cF$ denotes the standard Fourier transformation on $L^2 (\RR^d)$. In the case where $\Omega = \Lambda_L$ and periodic boundary conditions. Theorem~\ref{thm:Lognivenko-Sereda} has been proven in \cite{EgidiV-17-arxiv} under assumption \eqref{eq:Fourier_Kov}.
 Here, the Fourier transform $\mathcal{F} \phi$ of $\phi \in L^2 (\Lambda_L)$ is given by
 \begin{equation*}
  \mathcal{F} \phi : \left( \frac{2\pi}{L} \ZZ \right)^d \to \CC, \quad
  (\mathcal{F} \phi) (k)
  =
  \frac{1}{L^d} \int_{\Lambda_L} \phi(x) \euler^{-\mathrm{i} (x \cdot k)} \drm x .
 \end{equation*}
 In both cases, one can show that functions $\phi \in \ran P_{H_\Omega}( \lambda )$ as considered in Theorem~\ref{thm:Lognivenko-Sereda} satisfy Assumption~\eqref{eq:Fourier_Kov} with $x_j = 0$, and $b_j = 2\sqrt{\lambda}$. This has been carried out in Section~5 of \cite{EgidiV-18}. This statement remains true if $\Omega = \Lambda_L$ and Dirichlet or Neumann boundary conditions are imposed, see again Section~5 of \cite{EgidiV-18}.
\end{remark}
 Theorem~\ref{thm:Lognivenko-Sereda} immediately implies the following
 \begin{theorem}
  In the situation of Theorem~\ref{thm:Lognivenko-Sereda}, we have for all $\lambda \geq 0$, all $\theta > 0$ and all $\phi \in L^2(\Omega)$ that
  \begin{equation}
    \label{eq:uncertainty_relation_fractional_Laplacian}
    \lVert P_{(-\Delta)^\theta}(\lambda) \phi \rVert_{L^2(\Omega)}^2
    \leq
    d_0 \euler^{d_1 \lambda^{1/(2 \theta)}}
    \lVert P_{(-\Delta)^\theta}(\lambda) \phi \rVert_{L^2(S \cap \Omega)}^2
  \end{equation}
  where
  \[
  d_0 = \left( \frac{C^d}{\rho} \right)^{Cd}
  \quad\text{and}\quad
  d_1 =  C \lvert a \rvert_1 \ln \left( \frac{C^d}{\rho} \right),
  \]
 and $C$ is a universal positive constant.
 \end{theorem}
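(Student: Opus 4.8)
The plan is to derive \eqref{eq:uncertainty_relation_fractional_Laplacian} from Theorem~\ref{thm:Lognivenko-Sereda} by a spectral change of variables, using that $(-\Delta)^\theta$ is defined from the non-negative self-adjoint operator $-\Delta$ via the functional calculus and that $s\mapsto s^\theta$ is a strictly increasing bijection of $[0,\infty)$ onto itself.

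First I would establish the identity of spectral projectors
\[
 P_{(-\Delta)^\theta}(\lambda) = P_{-\Delta}\bigl(\lambda^{1/\theta}\bigr), \qquad \lambda \geq 0 .
\]
Writing $f(s)=s^\theta$, one has $(-\Delta)^\theta = f(-\Delta)$, and since $-\Delta\geq 0$ its spectral measure is carried by $[0,\infty)$. By the composition rule of the functional calculus, $P_{(-\Delta)^\theta}(\lambda) = \mathbf{1}_{(-\infty,\lambda]}\bigl(f(-\Delta)\bigr) = \bigl(\mathbf{1}_{(-\infty,\lambda]}\circ f\bigr)(-\Delta)$; because $f$ is increasing, $\mathbf{1}_{(-\infty,\lambda]}\circ f = \mathbf{1}_{[0,\lambda^{1/\theta}]}$ on $[0,\infty)$, which gives the claim. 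The same reasoning applies verbatim for Dirichlet, Neumann, or periodic boundary conditions, since only the non-negativity of $-\Delta$ is used.

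Second, I would insert this identity into the conclusion of Theorem~\ref{thm:Lognivenko-Sereda}. Fix $\lambda\geq 0$, put $\mu = \lambda^{1/\theta}$, and let $\phi\in L^2(\Omega)$. Applying Theorem~\ref{thm:Lognivenko-Sereda} at the spectral parameter $\mu$ yields
\[
 \lVert P_{-\Delta}(\mu)\phi \rVert_{L^2(\Omega)}^2
 \leq
 d_0\,\euler^{d_1 \mu^{1/2}}\,\lVert P_{-\Delta}(\mu)\phi \rVert_{L^2(S\cap\Omega)}^2
\]
with $d_0=(C^d/\rho)^{Cd}$ and $d_1=C\lvert a\rvert_1\ln(C^d/\rho)$. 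Replacing $P_{-\Delta}(\mu)$ by $P_{(-\Delta)^\theta}(\lambda)$ on both sides and using $\mu^{1/2}=\lambda^{1/(2\theta)}$ reproduces \eqref{eq:uncertainty_relation_fractional_Laplacian} with the very same constants. This holds for all $\theta>0$; only when one wishes to feed \eqref{eq:uncertainty_relation_fractional_Laplacian} into Theorem~\ref{thm:obs_and_control} does one additionally need $\gamma=1/(2\theta)\in(0,1)$, i.e.\ $\theta>1/2$.

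I do not expect a genuine obstacle. The only point requiring a little care is the endpoint convention for the resolution of identity — whether $P_H(\lambda)$ stands for $\mathbf{1}_{(-\infty,\lambda]}(H)$ or $\mathbf{1}_{(-\infty,\lambda)}(H)$ — but since $f$ is a homeomorphism of $[0,\infty)$ the preimage of a closed (resp.\ open) half-line is again closed (resp.\ open), so the projector identity, and hence the whole argument, is insensitive to this choice.
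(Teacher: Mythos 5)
Your argument is correct and is essentially identical to the paper's proof: the paper also reduces \eqref{eq:uncertainty_relation_fractional_Laplacian} to Theorem~\ref{thm:Lognivenko-Sereda} via the identity $P_{(-\Delta)^\theta}(\lambda)=P_{-\Delta}(\lambda^{1/\theta})$, citing the transformation formula for spectral measures (Schm\"udgen, Prop.~4.24) where you derive it directly from the composition rule of the functional calculus. Your additional remarks on the endpoint convention and on the role of $\theta>1/2$ are accurate but not needed for the estimate itself.
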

 \begin{proof}

 We estimate, using the transformation formula for spectral measures, cf.\ \cite[Prop.~4.24]{Schmuedgen-12}, and Theorem~\ref{thm:Lognivenko-Sereda}
  \begin{align*}
  \lVert P_{(- \Delta)^\theta}(\lambda) \phi \rVert_{L^2(\Omega)}^2
  =
  \lVert  P_{(- \Delta)}(\lambda^{1/\theta}) \phi \rVert_{L^2(\Omega)}^2
  &\leq
  d_0 \euler^{d_1 \lambda^{1/(2 \theta)}}
  \lVert P_{(- \Delta)} (\lambda^{1/\theta}) \phi \rVert_{L^2(S \cap \Omega)}^2\\
  &=
  d_0 \euler^{d_1 \lambda^{1/(2 \theta)}}
  \lVert P_{(- \Delta)^\theta} (\lambda) \phi \rVert_{L^2(S \cap \Omega)}^2
  .
  \qedhere
 \end{align*}
\end{proof}
Note that the exponent $1/2\theta$ in \eqref{eq:uncertainty_relation_fractional_Laplacian} is smaller than one if $\theta >1/2$.
\begin{remark}\label{rem:LMoyano}
In the recent preprint~\cite{LebeauM-19}, G. Lebeau and I. Moyano prove spectral inequalities for regular Schr\"odinger operators with thick observation sets.
More precisely, they consider Schr\"odinger operators $H_\Omega=-\Delta+V$,
where $V\colon \Omega:=\RR^d \to \RR$ can be extended to a holomorphic function
$V\colon \{z\in\CC^d: \lvert \Im(z) \rvert < \epsilon \} \to \RR$ for some $\epsilon\in(0,1)$, which decays at infinity, i.e.\ $\lim_{\lVert x \rVert \to\infty} V(x) =0$,
and satisfies for $\alpha\in \NN_0^d$, $\lvert\alpha \rvert \leq 2$ and $z\in\CC^d$ with $ \lvert \Im(z) \rvert < \epsilon$
\[
 \lvert \partial^\alpha (z) \rvert \leq C (1 + \lvert z \rvert )^{-\epsilon- \lvert \alpha \rvert}.
\]
(In fact, \cite{LebeauM-19} allows for $\Delta$ to be a Laplace-Beltrami operator with certain analytic metrics on $\RR^d$.)
\end{remark}
\begin{theorem}[\cite{LebeauM-19}]
\label{thm:Lebeau-Moyano}
 Let $S \subset \RR^d$ be a $(\rho , a)$-thick set and $H_\Omega$ as in Remark \ref{rem:LMoyano}.
 Then there are  constants $d_0, d_1\geq0$ which depend on the properties of the potential $V$ and the thick set $S$, such that for all $\phi \in L^2(\RR^d)$ one has
\begin{equation*}
 \Vert P_{H_\Omega}(\lambda)\phi \Vert^2_{L^2(\RR^d)}
 \leq d_0 \euler^{d_1\sqrt{\lambda_+}}
 \Vert P_{H_\Omega}(\lambda)\phi \Vert^2_{L^2(S)}.
\end{equation*}
\end{theorem}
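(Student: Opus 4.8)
If $\lambda < \infspec = \inf\sigma(H_\Omega)$ then $P_{H_\Omega}(\lambda) = 0$ and there is nothing to prove, so assume $\lambda \geq \infspec \geq -\lVert V\rVert_\infty$, write $\phi = P_{H_\Omega}(\lambda)\phi$, and set $\Lambda := \lambda_+ + \lVert V\rVert_\infty$. The plan is first to replace the spectral projector, which is not a differential object, by a genuinely elliptic problem in one extra variable. Using the entire functional calculus I would introduce the \emph{elliptic extension}
\[
 u(x,s) := \bigl( \cosh(s\sqrt{H_\Omega})\,\phi \bigr)(x), \qquad (x,s)\in\RR^d\times\RR,
\]
where $\mu\mapsto\cosh(s\sqrt\mu)$ is read as the entire function it is. Then $u$ solves $(\partial_s^2+\Delta_x-V)u = 0$ on $\RR^d\times\RR$, an \emph{elliptic} equation whose principal part is the Laplacian in $d+1$ variables, with $u(\cdot,0)=\phi$ and $\partial_s u(\cdot,0)=0$. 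Since the spectral measure of $\phi$ is carried by $(-\infty,\lambda]$ and $\cosh(s\sqrt\mu)\leq\euler^{|s|\sqrt\Lambda}$ there, one gets $\lVert u(\cdot,s)\rVert_{L^2(\RR^d)}\leq\euler^{|s|\sqrt\Lambda}\lVert\phi\rVert$ and, crucially, $\lVert\partial_s^k u(\cdot,0)\rVert_{L^2(\RR^d)}\leq\Lambda^{k/2}\lVert\phi\rVert$ for every $k$; so $s\mapsto u(\cdot,s)$ is entire of exponential type $\leq\sqrt\Lambda$. This last fact is the mechanism that will produce the claimed factor $\euler^{d_1\sqrt{\lambda_+}}$ rather than something of the form $\euler^{d_1\lambda}$.

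Second, I would exploit the analyticity of $V$. Since $V$ extends holomorphically to a complex strip with the stated decay of its derivatives, the coefficients of $\partial_s^2+\Delta_x-V$ are real-analytic with bounds uniform in the base point; by quantitative analytic elliptic regularity (estimates of Morrey--Nirenberg type) the solution $u$ is real-analytic in $(x,s)$ and extends holomorphically to a complex neighborhood of $\RR^d\times(-\tfrac12,\tfrac12)$ of \emph{fixed} width, with sup-norm there controlled by $\lVert u\rVert_{L^2(\RR^d\times(-1,1))}$, hence by $\euler^{C}\lVert\phi\rVert$. Combined with the exponential-type bound from the first step, this yields a quantitative bound of order $1+\sqrt{\lambda_+}$ on the vanishing order of $u$ -- and thus of its trace $\phi$ -- on any unit ball of $\RR^d$.

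Third comes the local quantitative unique continuation, carried out with the correct energy dependence. Tiling $\RR^d$ by translates $Q$ of the box $(0,a_1)\times\cdots\times(0,a_d)$, I would establish for each $Q$ a propagation-of-smallness inequality for $u$ -- via a Carleman estimate for $\partial_s^2+\Delta_x-V$ with a large parameter, or equivalently via Hadamard three-circle estimates on the holomorphic extension built above -- chaining a bounded number of balls from a neighborhood of $\{s=0\}$ to $Q\times\{0\}$; because the vanishing order is $O(1+\sqrt{\lambda_+})$, the accumulated cost is $C^{c(1+\sqrt{\lambda_+})}$. A Remez--Tur\'an inequality for functions with controlled vanishing order on $Q$ then converts smallness on $Q$ into smallness on the thick piece $S\cap Q$, giving $\lVert\phi\rVert_{L^2(Q)}^2\leq(C/\rho)^{c(1+\sqrt{\lambda_+})}\lVert\phi\rVert_{L^2(S\cap Q)}^2$ with constants independent of $Q$. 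For boxes far from the origin $V$ is small, and one may simply invoke the $V=0$ theory (Theorem~\ref{thm:Lognivenko-Sereda}) almost verbatim, so the analytic argument is only needed on a fixed compact region. Summing the local inequalities over all $Q$, using $\sum_Q\lVert\phi\rVert_{L^2(S\cap Q)}^2=\lVert\phi\rVert_{L^2(S)}^2$ and $\sum_Q\lVert\phi\rVert_{L^2(Q)}^2=\lVert\phi\rVert_{L^2(\RR^d)}^2$, then yields the global estimate with $d_0$ of the form $(C/\rho)^{c}$ and $d_1$ of the form $c\,\lvert a\rvert_1\ln(C/\rho)$, as claimed.

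I expect the main obstacle to be twofold. The conceptual part is keeping the $\lambda$-dependence pinned at $\sqrt{\lambda_+}$: one must propagate the exponential-type-$\sqrt\Lambda$ character of $u$ in the variable $s$ through every Carleman estimate and every iteration, since a single lossy step would degrade $\euler^{d_1\sqrt{\lambda_+}}$ to $\euler^{d_1\lambda}$ or worse. The technical part is uniformity: because $V$ is neither periodic nor compactly supported, the Carleman weights, the analytic-extension radii, and the three-ball constants must all be chosen independently of the base point -- this is exactly where the holomorphic-strip hypothesis and the derivative decay of $V$ are needed -- so that the local estimates glue into a scale-free global one. By contrast, the analytic elliptic regularity and the Remez--Tur\'an inequality are standard inputs, but they have to be quoted with explicit constants to obtain the stated forms of $d_0$ and $d_1$.
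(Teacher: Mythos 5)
This statement is not proved in the paper at all: it is imported verbatim from Lebeau--Moyano \cite{LebeauM-19}, and the authors explicitly treat it as a black box (they even remark afterwards that, unlike Theorems~\ref{thm:UCP_NTTV} and \ref{thm:Lognivenko-Sereda}, ``the explicit dependence on the geometric properties of the thick set is not exhibited''). So there is no in-paper argument to compare yours against; the only fair comparison is with the strategy of \cite{LebeauM-19} itself. Your sketch does reconstruct that strategy faithfully in outline: the elliptic extension $u(\cdot,s)=\cosh(s\sqrt{H_\Omega})\phi$ via the entire functional calculus (your computation that $u$ solves $(\partial_s^2+\Delta-V)u=0$ and is of exponential type $\sqrt{\Lambda}$ in $s$ is correct, including the treatment of negative spectrum through the evenness of $\cosh$), quantitative analytic elliptic regularity exploiting the holomorphic extension of $V$, propagation of smallness with cost $C^{c(1+\sqrt{\lambda_+})}$, a Kovrijkine/Remez-type step to pass from a cube to its thick portion, and the reduction to the free Logvinenko--Sereda theorem far from the origin where $V$ is small. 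This is the Lebeau--Robbiano--Jerison mechanism that \cite{LebeauM-19} implements.

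That said, as a proof your proposal has genuine gaps rather than being a complete alternative argument: every analytically hard step --- the uniform Carleman or three-ball estimates for $\partial_s^2+\Delta-V$ with base-point-independent constants, the quantitative Morrey--Nirenberg analytic regularity, the conversion of the exponential-type bound into a vanishing-order bound of order $1+\sqrt{\lambda_+}$, and the Remez--Tur\'an step --- is invoked as a named black box, and these are precisely the points where the proof in \cite{LebeauM-19} spends its effort. Moreover, your claimed final constants $d_0=(C/\rho)^c$ and $d_1=c\,\lvert a\rvert_1\ln(C/\rho)$ assert an explicit geometric dependence that the theorem does not claim and that \cite{LebeauM-19} does not establish: on the compact region where the analytic machinery is used, the constants depend on $V$ and on the extension radii in a way that is not reduced to $\rho$ and $a$ alone, which is exactly why the statement only asserts existence of $d_0,d_1$ depending ``on the properties of the potential $V$ and the thick set $S$''. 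So the route is the right one, but the sketch should not be mistaken for a proof, and the advertised form of the constants overreaches.
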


Now we combine the spectral inequalities from the previous theorems in this section
with Theorem~\ref{thm:obs_and_control} and immediately deduce the following explicit estimates on the control cost.
\begin{theorem}[Negative Laplacian with control on thick sets]\label{thm:thick}
 Let $\Omega = \RR^d$, or $\Omega = \Lambda_L$ for some $L > 0$. Let further $S \subset \RR^d$ be a $(\rho , a)$-thick set. If $\Omega = \Lambda_L$ we assume that $a_j \leq L$ for all $j \in \{1,\ldots , d\}$. Then for all $\phi \in L^2 (\Omega)$, and all $T > 0$ we have
 \[
  \lVert \euler^{\Delta T} \phi \rVert_{L^2 (\Omega)}^2
  \leq
  \const^2
  \int_0^T \lVert \euler^{\Delta t} \phi \rVert_{L^2 (S \cap \Omega)}^2 \drm t ,
  \]
  where
  \[
  \const^2
 =
 \frac{C_1^{d^2}}{T} \rho^{-C_2d}
   \exp \left( \frac{C_3 \lvert a \rvert_1^2 \ln^2 (C_4^d / \rho)}{T} \right).
 \]
 Here, $C_1$, $C_2$, $C_3$, and $C_4$ are universal positive constants.
 Moreover, for all $T > 0$ the system \eqref{eq:heat_equation} with $V = 0$ is null-controllable in time $T$, and the cost satisfies $\cost \leq \const$.
\end{theorem}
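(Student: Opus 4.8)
The plan is to apply Theorem~\ref{thm:obs_and_control} with the spectral inequality supplied by Theorem~\ref{thm:Lognivenko-Sereda}. Concretely, set $X = U = L^2(\Omega)$, let $A = -\Delta$ be the (Dirichlet, Neumann, or periodic) Laplacian, which is non-negative and self-adjoint, take $B = \Eins_{S \cap \Omega} \in \cL(U, X)$, and $\beta = 0$, so that $X_\beta = X$ and $\lVert B \rVert_{\cL(U, X_\beta)} \leq 1$. Theorem~\ref{thm:Lognivenko-Sereda} asserts precisely that the spectral inequality~\eqref{eq:UCP} holds for all $\lambda > 0$ with exponent $\gamma = 1/2$ and constants $d_0 = (C^d/\rho)^{Cd}$ and $d_1 = C\lvert a\rvert_1 \ln(C^d/\rho)$; for $\lambda \leq \infspec$ both sides of~\eqref{eq:UCP} vanish since $P_{-\Delta}(\lambda) = 0$, so the inequality is valid on all of $(0, \infty)$ regardless of the boundary conditions (in particular also when $\infspec > 0$, which may happen for Dirichlet conditions on $\Lambda_L$).

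First I would invoke Theorem~\ref{thm:obs_and_control}. It immediately yields the observability estimate
\[
 \lVert \euler^{\Delta T}\phi\rVert_{L^2(\Omega)}^2 \leq \const^2 \int_0^T \lVert \euler^{\Delta t}\phi\rVert_{L^2(S\cap\Omega)}^2 \drm t
\]
for all $\phi \in L^2(\Omega)$ and $T > 0$, together with null-controllability of~\eqref{eq:heat_equation} (with $V = 0$) in every time $T > 0$ and the bound $\cost \leq \const$. Since $\gamma = 1/2$ we have $\tfrac{1}{1-\gamma} = 2$, and since $\beta = 0$ the term $(-\beta)^{C_4}$ in the exponent of Theorem~\ref{thm:obs_and_control} disappears; hence one may take
\[
 \const^2 = \frac{C_1 d_0}{T}\, K_1^{C_2} \exp\!\left( \frac{C_3\, d_1^2}{T} \right), \qquad K_1 = 2 d_0 + 1,
\]
with $C_1, C_2, C_3$ depending only on $\gamma = 1/2$, i.e.\ absolute constants.

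It then remains to substitute the explicit $d_0$ and $d_1$ and to collect constants, tracking the dimension dependence. Writing $d_0 = (C^d/\rho)^{Cd} = C^{Cd^2}\rho^{-Cd}$ and bounding $K_1^{C_2} = (2d_0 + 1)^{C_2} \leq (3 d_0)^{C_2}$, the prefactor becomes
\[
 \frac{C_1 d_0}{T}\, K_1^{C_2} \leq \frac{3^{C_2} C_1}{T}\, d_0^{1 + C_2} = \frac{3^{C_2} C_1}{T}\, C^{C(1+C_2)d^2}\, \rho^{-C(1+C_2)d}.
\]
Absorbing the absolute constants into new ones gives a term of the form $C_1^{d^2}\rho^{-C_2 d}/T$: the power of $\rho^{-1}$ stays linear in $d$ while the power of the base constant is quadratic in $d$, which is exactly why the prefactor is recorded as $C_1^{d^2}/T$. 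For the exponent, $d_1^2 = C^2 \lvert a\rvert_1^2 \ln^2(C^d/\rho)$, so $C_3 d_1^2/T = C_3 C^2 \lvert a\rvert_1^2 \ln^2(C^d/\rho)/T$; renaming $C_4 = C$ and absorbing $C^2$ into $C_3$ yields $C_3 \lvert a\rvert_1^2 \ln^2(C_4^d/\rho)/T$. This produces the claimed form of $\const^2$, and the null-controllability statement with $\cost \le \const$ has already been obtained from Theorem~\ref{thm:obs_and_control}.

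The only point requiring a little attention, rather than being purely mechanical, is that the constants $d_0, d_1$ coming from Theorem~\ref{thm:Lognivenko-Sereda} are genuinely independent of $\Omega$ (the underlying Logvinenko--Sereda estimate is scale-free), so that the resulting control cost bound does not covertly depend on $L$ or on the choice of boundary conditions; together with the bookkeeping of the powers of $d$ just described. I do not expect any genuine obstacle beyond this.
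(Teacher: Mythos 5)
Your proposal is correct and is exactly the route the paper takes: Theorem~\ref{thm:thick} is stated in the paper as an immediate consequence of combining the Logvinenko--Sereda spectral inequality (Theorem~\ref{thm:Lognivenko-Sereda}, giving $\gamma=1/2$, $d_0=(C^d/\rho)^{Cd}$, $d_1=C\lvert a\rvert_1\ln(C^d/\rho)$) with Theorem~\ref{thm:obs_and_control} for $X=U=L^2(\Omega)$, $A=-\Delta$, $B=\Eins_{S\cap\Omega}$, $\beta=0$, followed by the same bookkeeping of constants. Your observations that $-\Delta$ is non-negative for all admitted boundary conditions, that the spectral inequality is trivial below the bottom of the spectrum, and that the bounds are scale-free in $\Omega$ are the right points to check, and they all hold.
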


\begin{theorem}[Fractional negative Laplacian with control on thick sets]\label{thm:thick_fractional}
 Let $\Omega = \RR^d$, or $\Omega = \Lambda_L$ for some $L > 0$ and let $\theta > 1/2$.
 Let further $S \subset \RR^d$ be a $(\rho , a)$-thick set. If $\Omega = \Lambda_L$ we assume that $a_j \leq L$ for all $j \in \{1,\ldots , d\}$. Then for all $\phi \in L^2 (\Omega)$, and all $T > 0$ we have
 \[
  \lVert \euler^{- (-\Delta)^\theta T} \phi \rVert_{L^2 (\Omega)}^2
  \leq
  \const^2
  \int_0^T \lVert \euler^{- (-\Delta)^\theta t} \phi \rVert_{L^2 (S \cap \Omega)}^2 \drm t ,
  \]
  where
  \[
  \const^2
 =
 \frac{C_1}{T} \rho^{-C_2 d} \exp \left( \frac{C_3 \big( \lvert a \rvert_1 \ln( C_4^d / \rho) \big)^{\frac{2 \theta}{2 \theta - 1}}}{T^{\frac{1}{2 \theta - 1}}} \right) .
 \]
 Here, $C_1$, $C_2$, $C_3$, and $C_4$ are universal positive constants.
 Moreover, for all $T > 0$ the system
\begin{equation} \label{eq:heat_equation_fractional}
\dot w  + (-\Delta)^\theta w = \Eins_{S\cap \Omega} u ,
 \quad
 w(0,\cdot) = w_0 \in L^2(\Omega),
 \end{equation}
is null-controllable in time $T$, and the cost satisfies $\cost \leq \const$.
\end{theorem}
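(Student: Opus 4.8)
The plan is to derive Theorem~\ref{thm:thick_fractional} as a direct application of Theorem~\ref{thm:obs_and_control} to the abstract data $X=U=L^2(\Omega)$, $A=(-\Delta)^\theta$ (defined through the functional calculus of the Dirichlet, Neumann, or periodic Laplacian on $\Omega$), $B=\Eins_{S\cap\Omega}$, and $\beta=0$, fed with the spectral inequality~\eqref{eq:uncertainty_relation_fractional_Laplacian}. First I would check the hypotheses of Theorem~\ref{thm:obs_and_control}: since $-\Delta\ge 0$ is self-adjoint, its power $A=(-\Delta)^\theta$ is non-negative and self-adjoint and generates an analytic contraction semigroup $\euler^{-At}$; moreover $B=\Eins_{S\cap\Omega}\in\cL(U,X_0)$ with $\lVert B\rVert\le 1$, and because $\beta=0$ no extrapolation space is needed, so the set-up of Section~\ref{sec:abstract} applies verbatim. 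The assumption $\theta>1/2$ is exactly what makes the exponent $\gamma:=1/(2\theta)$ in~\eqref{eq:uncertainty_relation_fractional_Laplacian} lie in $(0,1)$, as demanded in~\eqref{eq:UCP}.

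With these identifications, \eqref{eq:uncertainty_relation_fractional_Laplacian} is precisely~\eqref{eq:UCP} with $d_0=(C^d/\rho)^{Cd}$, $d_1=C\lvert a\rvert_1\ln(C^d/\rho)$, and $\gamma=1/(2\theta)$, so Theorem~\ref{thm:obs_and_control} immediately yields the observability estimate with
\[
  \const^2=\frac{C_1 d_0}{T}\,K_1^{C_2}\exp\!\left(C_3\left(\frac{d_1}{T^\gamma}\right)^{\frac{1}{1-\gamma}}\right),\qquad K_1=2d_0\lVert B\rVert_{\cL(U,X_0)}^2+1\le 2d_0+1,
\]
the $(-\beta)^{C_4}$ term having dropped out because $\beta=0$, and with $C_1,C_2,C_3>0$ depending only on $\gamma$, hence only on $\theta$. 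What is left is bookkeeping: from $\gamma=1/(2\theta)$ one reads off $1/(1-\gamma)=2\theta/(2\theta-1)$ and $\gamma/(1-\gamma)=1/(2\theta-1)$, whence
\[
  \left(\frac{d_1}{T^\gamma}\right)^{\frac{1}{1-\gamma}}=\frac{d_1^{\frac{2\theta}{2\theta-1}}}{T^{\frac{1}{2\theta-1}}}=\frac{\bigl(C\lvert a\rvert_1\ln(C^d/\rho)\bigr)^{\frac{2\theta}{2\theta-1}}}{T^{\frac{1}{2\theta-1}}},
\]
which is of the advertised shape after renaming constants. For the prefactor one bounds $d_0 K_1^{C_2}\le 3^{C_2}d_0^{\,C_2+1}=3^{C_2}(C^d/\rho)^{Cd(C_2+1)}$ and absorbs every factor depending only on $d$ and $\theta$ into fresh universal constants, arriving at $\const^2=\tfrac{C_1}{T}\rho^{-C_2 d}\exp(\cdots)$ as stated.

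Null-controllability and the cost bound then come for free: the observability estimate just established is condition~(b) of Lemma~\ref{lem:absop} applied to $\cX=\euler^{-AT}\colon X\to X$ and $\cY=\cB^T\colon L^2([0,T],U)\to X$, so $\ran\euler^{-AT}\subset\ran\cB^T$, i.e.\ \eqref{eq:heat_equation_fractional} is null-controllable in time $T$; and by the definition~\eqref{eq:definition_cost} of $\cost$ together with the equality of infima in~\eqref{eq:absop} one gets $\cost\le\const$. All of this is already packaged in the final sentence of Theorem~\ref{thm:obs_and_control}, so it suffices to invoke it. I do not expect a genuine obstacle: the statement is a corollary of substituting one cited spectral inequality into one already-proved abstract theorem. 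The only points needing (mild) care are keeping $\gamma=1/(2\theta)<1$ (which pins down the hypothesis $\theta>1/2$), the exponent arithmetic $1/(1-\gamma)=2\theta/(2\theta-1)$, and verifying that the $C^{d^2}$-type factor produced by $d_0^{\,C_2+1}$ is honestly absorbed into the quoted universal constant (or, equivalently, recorded as a $C_1^{d^2}$ in parallel with Theorem~\ref{thm:thick}); none of these is deep.
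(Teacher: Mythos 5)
Your proposal is correct and is exactly the argument the paper intends: substitute the fractional spectral inequality \eqref{eq:uncertainty_relation_fractional_Laplacian} with $\gamma=1/(2\theta)$, $d_0=(C^d/\rho)^{Cd}$, $d_1=C\lvert a\rvert_1\ln(C^d/\rho)$, $\beta=0$ into Theorem~\ref{thm:obs_and_control}, and the exponent arithmetic and constant bookkeeping are as you describe. Your parenthetical observation that the prefactor produced by $d_0K_1^{C_2}$ is really of the form $C_1^{d^2}\rho^{-C_2d}$ (as recorded in Theorem~\ref{thm:thick}) rather than a $d$-independent $C_1$ is a fair and accurate remark about the statement's bookkeeping.
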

\begin{theorem}[Schr\"odinger operator with control on equidistributed sets] \label{thm:equi}
 Let $G,\delta > 0$, $\Omega \subset \RR^d$ be as in \eqref{eq:Omega} with
 $\Lambda_G \subset \Omega$, $S \subset \RR^d$ be $(\delta , G)$-equidistributed, and $V \in L^\infty (\Omega)$ real-valued.
 Then for all $\phi \in L^2 (\Omega)$, and all $T > 0$ we have
 \[
  \lVert \euler^{- H_\Omega T} \phi \rVert_{L^2 (\Omega)}^2
  \leq
  \const^2
  \int_0^T \lVert \euler^{- H_\Omega t} \phi \rVert_{L^2 (S \cap \Omega)}^2 \drm t ,
 \]
 where
\begin{align*}
\const^2
 &= \left( \frac{\delta}{G} \right)^{-C_2 (1 + G^{4/3} \lVert V - \infspec \rVert_\infty^{2/3})} \inf_{t \in (0,T]}
      \frac{C_1}{t}
      \exp \left( \frac{C_3 G^2 \ln^2 (\delta / G)}{t} - 2 \infspec t \right) & & \text{if $\infspec < 0$}, \\[2ex]
 \const^2
 &=  \left( \frac{\delta}{G} \right)^{-C_2 (1 + G^{4/3} \lVert V \rVert_\infty^{2/3})} \frac{C_1}{T} \exp \left( \frac{C_3 G^2 \ln^2 (\delta / G)}{T} \right) & & \text{if $\infspec \geq 0$},
  \\[2ex]
\const^2 &=  \left( \frac{\delta}{G} \right)^{-C_2 (1 + G^{4/3} \lVert V - \infspec \rVert_\infty^{2/3})} \inf_{t \in [0,T)}
      \frac{C_1}{T-t}
      \exp \left( \frac{C_3 G^2 \ln^2 (\delta / G)}{T-t} - 2 \infspec t \right) & & \text{if $\infspec > 0$}.
\end{align*}
Here, $C_1$, $C_2$, and $C_3$ are positive constants depending only on the dimension.
Moreover, for all $T > 0$ the system \eqref{eq:heat_equation} is null-controllable in time $T$, and the cost satisfies $\cost \leq \const$.
\end{theorem}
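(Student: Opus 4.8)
\emph{Approach.} The plan is to obtain Theorem~\ref{thm:equi} as a specialization of the abstract results of Section~\ref{sec:abstract}, applied with $X = U = L^2(\Omega)$, $A = H_\Omega = -\Delta + V$, $B = \Eins_{S\cap\Omega}$ and $\beta = 0$. Here $B$ is multiplication by the indicator function of $S\cap\Omega$, hence a bounded self-adjoint projection with $\lVert B\rVert_{\cL(U,X)}\le 1$ and $B^\ast = B$, so that the abstract observability estimates \eqref{eq:obs} and \eqref{eq:obs_lambda_0} reduce to the inequality asserted in Theorem~\ref{thm:equi}; in the case $\infspec\ge 0$ the weight $\euler^{-2\infspec(T-t)}$ appearing in \eqref{eq:obs_lambda_0} is $\le 1$ and may simply be dropped, while for $\infspec<0$ it is handled by the time-shift argument already carried out in the proof of Theorem~\ref{cor:upper_bounds_control_cost}.

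\emph{Step 1: the spectral inequality.} The hypothesis of Theorem~\ref{thm:obs_and_control}/\ref{cor:upper_bounds_control_cost} --- a scale-free spectral inequality with exponent $\gamma\in(0,1)$ --- is exactly the content of Theorem~\ref{thm:UCP_NTTV}. Rewriting $\lVert \Eins_{S\cap\Omega}P_{H_\Omega}(\lambda)\phi\rVert^2\ge C_\si\lVert P_{H_\Omega}(\lambda)\phi\rVert^2$ as $\lVert P_{H_\Omega}(\lambda)\phi\rVert^2\le C_\si^{-1}\lVert \Eins_{S\cap\Omega}P_{H_\Omega}(\lambda)\phi\rVert^2$ and inserting, for $\lambda\ge\infspec$, the lower bound for $C_\si$ obtained by choosing $\zeta=\infspec$ in Theorem~\ref{thm:UCP_NTTV}, one gets \eqref{eq:uncertainty} with $\gamma=1/2$,
\[
 d_0 = \left(\frac{\delta}{G}\right)^{-N\bigl(1 + G^{4/3}\lVert V-\infspec\rVert_\infty^{2/3}\bigr)},
 \qquad
 d_1 = N G\,\ln\!\left(\frac{G}{\delta}\right)\ge 0 .
\]
The sign is consistent: $\delta<G$ forces $\ln(\delta/G)<0$, so $d_1=-NG\ln(\delta/G)\ge0$, and $d_0$ is the reciprocal of the constant in Theorem~\ref{thm:UCP_NTTV}, in particular $d_0>1$. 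When $\infspec\ge 0$ one may instead choose $\zeta=0$, since then $\lambda\ge\infspec\ge0$ and $(\lambda-\zeta)_+=\lambda$; this produces the same $d_1$ but with $\lVert V\rVert_\infty$ in place of $\lVert V-\infspec\rVert_\infty$ and a spectral inequality at level $\lambda$ (not $\lambda-\infspec$), which is what is needed to apply Theorem~\ref{thm:obs_and_control} directly to $A=H_\Omega\ge0$ in that regime.

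\emph{Step 2: the abstract theorem and bookkeeping.} Feeding these parameters into Theorem~\ref{cor:upper_bounds_control_cost} (resp.\ Theorem~\ref{thm:obs_and_control} when $\infspec=0$) immediately yields the observability estimate, null-controllability for every $T>0$, and the three case-distinguished cost bounds, with $\const^2$ of the stated shape but carrying the generic $\gamma$-dependent constants $C_i$ and the factor $K_1^{C_2}$, where $K_1 = 2d_0\euler^{-\beta}\lVert B\rVert_{\cL(U,X)}^2 + 1\le 2d_0+1\le 3d_0$ (using $d_0>1$). Hence $C_1 d_0 K_1^{C_2}\le 3^{C_2}C_1\,d_0^{\,C_2+1}$, and $d_0^{\,C_2+1} = (\delta/G)^{-N(C_2+1)(1 + G^{4/3}\lVert V-\infspec\rVert_\infty^{2/3})}$; absorbing $N(C_2+1)$ into a new dimensional constant gives the displayed prefactor. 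Since $\gamma=1/2$, $\bigl(d_1/t^{\gamma}\bigr)^{1/(1-\gamma)} = d_1^2/t = N^2G^2\ln^2(\delta/G)/t$, so absorbing $N^2$ into $C_3$ produces the exponent $C_3 G^2\ln^2(\delta/G)/t$. For $\infspec>0$ one may also evaluate the infimum in Theorem~\ref{cor:upper_bounds_control_cost}(c) at $t=0$ to recover the simpler $\infspec\ge0$ form; the sharper infimum bound is recorded as the third displayed formula.

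\emph{Main difficulty.} There is no genuine obstacle --- the theorem is in essence a corollary. The only points requiring care are the sign bookkeeping (the quantity called $d_1$ in Theorem~\ref{thm:UCP_NTTV} is $\le 0$ and enters Theorem~\ref{cor:upper_bounds_control_cost} with the opposite sign), the split between the regimes $\infspec\ge0$ (apply Theorem~\ref{thm:obs_and_control} directly to $A=H_\Omega$, with $\lVert V\rVert_\infty$) and general $\infspec$ (apply Theorem~\ref{cor:upper_bounds_control_cost} to the shift $A-\infspec$, with $\lVert V-\infspec\rVert_\infty$), and the clean repackaging of the constants $C_1,\dots,C_4$ together with the factor $K_1^{C_2}$ into the dimension-dependent constants $C_1,C_2,C_3$ of the statement.
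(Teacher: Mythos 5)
Your proposal is correct and follows exactly the route the paper intends: the paper gives no separate proof of Theorem~\ref{thm:equi} but derives it by inserting the spectral inequality of Theorem~\ref{thm:UCP_NTTV} (with $\gamma=1/2$, $\beta=0$, $B=\Eins_{S\cap\Omega}$) into Theorems~\ref{thm:obs_and_control} and~\ref{cor:upper_bounds_control_cost}, which is what you do. Your sign bookkeeping for $d_0,d_1$, the choice $\zeta=0$ versus $\zeta=\infspec$ to explain the $\lVert V\rVert_\infty$ versus $\lVert V-\infspec\rVert_\infty$ prefactors, and the absorption of $K_1^{C_2}$ into the dimensional constants are all accurate.
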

\begin{remark}
Note that the Theorems~\ref{thm:thick}, \ref{thm:thick_fractional} and \ref{thm:equi}
inherit the property to be scale-free, i.e.\ the constants in the upper bounds  are uniform in $\Omega \subset \RR^d$. They depend on $V$ only via its $L^\infty$-norm.
This will be relevant in certain asymptotic regimes, cf.~Remark~\ref{rem:Miller}.
\end{remark}
Theorem~\ref{thm:Lebeau-Moyano} of Lebeau and Moyano enables us to derive observability and control cost estimates for
controls on thick sets and Schr\"odinger semigroups with analytic potentials, again using Theorem~\ref{thm:obs_and_control}.
However, in this case the explicit dependence on the geometric properties of the thick  set is not exhibited.
\begin{theorem}[Regular Schr\"odinger operator with control on thick sets]
 Let $S \subset \RR^d$ be a $(\rho , a)$-thick set, $H_\Omega$ as in Remark \ref{rem:LMoyano}, and $\infspec=\min \sigma (H_\Omega)\geq0$.
Then for all $\phi \in L^2 (\RR^d)$, and all $T > 0$ we have
 \begin{equation*}
  \lVert \euler^{- H_\Omega T} \phi \rVert_{L^2 (\RR^d)}^2
  \leq
  \const^2
  \int_0^T \lVert \euler^{- H_\Omega t} \phi \rVert_{L^2 (S)}^2 \drm t ,
\end{equation*}
where
\begin{equation*}
  C_\obs^2
  \leq
  \frac{C_1 d_0}{T} (2 d_0   + 1)^{C_2}
  \exp
  \left(
    C_3
    \frac{d_1^2}{T}
  \right) .
 \end{equation*}
Here $C_1$, $C_2$, $C_3$, $d_0$, and $d_1$ are the constants coming from Theorems~\ref{thm:obs_and_control} and \ref{thm:Lebeau-Moyano}.
Moreover, for all $T > 0$ the system \eqref{eq:heat_equation} is null-controllable in time $T$, and the cost satisfies $\cost \leq \const$.
 \end{theorem}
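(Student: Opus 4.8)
The plan is to read off this result as a direct specialization of the abstract Theorem~\ref{thm:obs_and_control} to the concrete data $X = U = L^2(\RR^d)$, $A = H_\Omega$, $B = \Eins_S$, $\beta = 0$, $\gamma = 1/2$, fed with the spectral inequality of Theorem~\ref{thm:Lebeau-Moyano}. First I would record the elementary facts that $B = \Eins_S$ is a bounded self-adjoint multiplication operator on $L^2(\RR^d)$ with $\lVert B \rVert_{\cL(L^2, L^2)} \le 1$, so that $B^\ast = B$ and $\lVert B^\ast P_{H_\Omega}(\lambda)\phi \rVert_{L^2(\RR^d)} = \lVert P_{H_\Omega}(\lambda)\phi \rVert_{L^2(S)}$, and that the hypothesis $\infspec = \min\sigma(H_\Omega) \ge 0$ guarantees that $A = H_\Omega$ is non-negative, which is exactly what makes the non-negative Theorem~\ref{thm:obs_and_control} (rather than Theorem~\ref{cor:upper_bounds_control_cost}) the relevant one here.

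Next I would check that Theorem~\ref{thm:Lebeau-Moyano} supplies hypothesis~\eqref{eq:UCP} with $\gamma = 1/2$. Since $\lambda_+ = \lambda$ for every $\lambda > 0$, the estimate of Theorem~\ref{thm:Lebeau-Moyano} reads, for all $\lambda > 0$ and all $\phi \in L^2(\RR^d)$,
\[
  \lVert P_{H_\Omega}(\lambda)\phi \rVert_{L^2(\RR^d)}^2
  \le
  d_0\, \euler^{d_1 \sqrt{\lambda}}\,
  \lVert B^\ast P_{H_\Omega}(\lambda)\phi \rVert_{L^2(\RR^d)}^2 ,
\]
which is precisely~\eqref{eq:UCP}. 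Theorem~\ref{thm:obs_and_control} then immediately yields the observability estimate with
\[
  \const^2
  =
  \frac{C_1 d_0}{T}\, K_1^{C_2}\,
  \exp\!\left( C_3 \left( \frac{d_1 + (-\beta)^{C_4}}{T^\gamma} \right)^{\frac{1}{1-\gamma}} \right) ,
  \qquad
  K_1 = 2 d_0\, \euler^{-\beta}\, \lVert B \rVert_{\cL(L^2, L^2)}^2 + 1 .
\]
Specializing $\beta = 0$ (so that $(-\beta)^{C_4} = 0$ and $\euler^{-\beta} = 1$), $\gamma = 1/2$ (so that $1/(1-\gamma) = 2$), and using $\lVert B \rVert_{\cL(L^2, L^2)} \le 1$ gives $K_1 \le 2 d_0 + 1$, hence $K_1^{C_2} \le (2d_0+1)^{C_2}$ since $K_1 \ge 1$ and $C_2 > 0$, and turns the exponent into $C_3 d_1^2 / T$. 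This is exactly the asserted bound $C_\obs^2 \le \frac{C_1 d_0}{T}(2 d_0 + 1)^{C_2}\exp(C_3 d_1^2/T)$.

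Finally, the null-controllability of the system~\eqref{eq:heat_equation} in every time $T > 0$ (note that $S \cap \Omega = S$ here since $\Omega = \RR^d$) together with the control cost bound $\cost \le \const$ is the ``moreover'' part of Theorem~\ref{thm:obs_and_control}, which in turn rests on the duality between observability and null-controllability recorded in Lemma~\ref{lem:absop}. I do not expect a genuine obstacle: the entire mathematical content sits in Theorems~\ref{thm:Lebeau-Moyano} and~\ref{thm:obs_and_control}, and the proof reduces to matching notation and evaluating the general constant at $\beta = 0$, $\gamma = 1/2$. The only point that deserves a moment of attention is the role of the assumption $\infspec \ge 0$, which is precisely what permits the use of the non-negative abstract theorem and avoids the weighted, infimum-over-time form of the estimate that Theorem~\ref{cor:upper_bounds_control_cost} would otherwise impose.
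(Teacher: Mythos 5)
Your proposal is correct and follows exactly the route the paper intends: the theorem is stated as an immediate consequence of feeding the Lebeau--Moyano spectral inequality (Theorem~\ref{thm:Lebeau-Moyano}, with $\lambda_+=\lambda$ for $\lambda>0$, hence $\gamma=1/2$) into the non-negative abstract result Theorem~\ref{thm:obs_and_control} with $X=U=L^2(\RR^d)$, $B=\Eins_S$, $\beta=0$, and then evaluating the constant, which is precisely what you do. Your side remarks --- that $\lVert \Eins_S\rVert\le 1$ gives $K_1\le 2d_0+1$, that $(-\beta)^{C_4}=0$ and $1/(1-\gamma)=2$ produce the exponent $C_3 d_1^2/T$, and that the hypothesis $\infspec\ge 0$ is what licenses the unweighted form of the estimate --- are all accurate.
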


 \begin{remark}
 \label{rem:Miller}
In Theorems~\ref{thm:thick},~\ref{thm:thick_fractional}, and~\ref{thm:equi}, the asymptotic behavior of the upper bound on $\cost$ in the limit $T \to 0$ and $T \to \infty$ is optimal as discussed in Remark~\ref{rem:table}, see also Table~\ref{table:asymptotics}.
We also note that the term $\lVert V \rVert_\infty^{2/3}$ in the bound in Theorem~\ref{thm:equi} is optimal, at least in even space dimensions, see~\cite{DuyckaertsZZ-08}.
\par
 Furthermore, let us emphasize that the dependence of the rate of the exponential term on the parameter $G$ in Theorem~\ref{thm:equi} is optimal.
In order to illustrate this we will choose a geometry to which our upper bounds on the control cost apply  as well as the lower bound established by Miller in~\cite{Miller-04}.
Consider a hypercube of side length $L$
and for any $G$ dividing $L$,
let $S\cap \Lambda_L$ be the union of right halfs of elementary $G$-cells in the $L$-torus, see Figure~\ref{fig:right_halfs}.
The set $S\cap \Lambda_L$ is the restriction of a $(G, G/4)$-equidistributed set $S$ to $\Lambda_L$.
\begin{figure}[ht] \centering

    \begin{tikzpicture}[scale=1]

    \def\n{1} \def\f{1/2}

     \foreach \y in {0,...,3}{
    \foreach \x in {0,...,3}{
        \filldraw[black!20] (\x+.5,\y) rectangle (\x+1,\y+1);
	\draw[step=1,black!50,very thin] ((\x,\y) rectangle (\x+.5,\y+1);
        }}
	\draw[very thick] (0,0) rectangle (4,4);
    \draw[step=1,black!50,very thin] (0,0) grid (4,4);
    \draw[very thick] (0,0) rectangle (4,4);
    \begin{scope}[xshift = 5cm]
     \draw[very thick, dotted] (0,0) rectangle (1,1);
     \filldraw[black!20] (0,2) rectangle (.5,3);
     \draw[anchor = west] (1.5,.5) node {$\Lambda_G$};
     \draw[anchor = west] (1,2.5) node {$S$};
    \end{scope}
     \draw (-.5, 2) node {$\Lambda_L$};
  \end{tikzpicture}
  \caption{The $(G,\delta)$-equidistributed set $S \subset \Lambda_L$ in Remark~\ref{rem:Miller}}
  \label{fig:right_halfs}
 \end{figure}
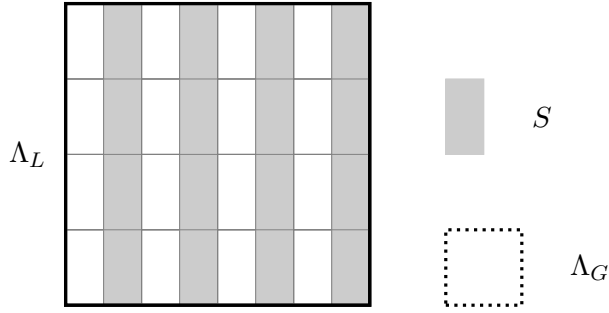
For $H_\Omega=-\Delta$ with periodic boundary conditions on $\Omega=\Lambda_L$  Theorem~\ref{thm:equi} implies
 \begin{equation}
  \label{eq:upper_bound_geometry}
  \limsup_{T \to 0} T \ln C_T
  \leq
  C G^2
 \end{equation}
with a positive constant $C$. Note that due to periodic boundary conditions $H_\Omega$ can be considered as minus times the Laplacians on the torus.
Thus the  following result of Miller~\cite{Miller-04} applies: On bounded, smooth, and connected manifolds $\Omega$ with control operator $B = \chi_S$ for an open $S \subset \Omega$ the control cost $C_T$ in time $T$ satisfies
 \begin{equation}
  \label{eq:lower_bound_geometry_Miller}
  \sup_{y \in \Omega} \operatorname{dist} (y,\overline S)^2 / 4
  \leq
  \liminf_{T \to 0} T \ln C_T.
 \end{equation}
For every $G$ dividing $L$, the left hand side of~\eqref{eq:lower_bound_geometry_Miller} is precisely $G^2/64$.
 Combining~\eqref{eq:lower_bound_geometry_Miller} and~\eqref{eq:upper_bound_geometry} gives
 \begin{equation}
  \label{eq:Miller_comparing_bounds}
\frac{G^2}{64}  \leq   \liminf_{T \to 0} T \ln C_T \leq  C \ G^2
  \quad
  \text{for all $G$ which divide $L$}.
 \end{equation}
Now we see that for any $\epsilon>0$ a bound of the type $\limsup_{T \to 0} T \ln C_T \leq C G^{2 -\epsilon}$ is impossible by considering the limit $G\to \infty$.
(Note that this requires $L\to \infty$. However, this is covered by our results,
since the control cost bounds are scale-free, i.e.\ $L$-independent.)
Similarly, the asymptotic behavior for  $G\to 0 $ excludes a bound of the type $\limsup_{T \to 0} T \ln C_T \leq C G^{2 +\epsilon}$.
Since every $(G,\delta)$-equidistributed set is $(\rho,a)$-thick for some $\rho$ and $a$, the above example also applies to the upper bound in Theorem~\ref{thm:thick}.
\par
We also note that the restriction of $L$ being a multiple of $G$ in~\eqref{eq:Miller_comparing_bounds} is unnecessary: The lower bound in~\cite{Miller-04} is based on approximate $\delta$-functions as initial states
for the heat evolution.
 It can be generalized from compact manifolds to the case of $\RR^d$, see for instance~\cite{EgidiV-18, WangWZZ-17-arxiv} where similar ideas are being used whence we obtain the inequality in~\eqref{eq:Miller_comparing_bounds} for all $G > 0$.
\par
 Another way to interpret  \eqref{eq:Miller_comparing_bounds} is that in the exponential, the geometric parameters $G$ is in the same relation to the time $T$ as the order of space and time derivatives in the underlying heat equation,
 i.e.\ $\ln C_T\sim G^2/T$.
\end{remark}
\section{Homogenization and de-homogenization}
\label{sec:homogenization}
We now investigate the behavior of the control cost in two asymptotic geometric regimes:
Homogenization and de-homogenization. Although there is an intuitive relation to well known homogenization theory, we are pursuing here a different, and to our best knowledge, novel question:
How does the control cost change if we modify the control set in a specific geometric way?

For the purpose of this paper a homogenization limit of the control set means
that we consider a sequence of operators $B_n, n \in \NN,$ each one the indicator function of a set
$S(n)\subset \RR^d$, with the property that $S(n)$ is $(\rho,a_n)$-thick, with $\rho>0$ independent of
$n$,  and $a_n\to 0$ for $n\to \infty$. De-homogenization is defined analogously, see below.

The bounds of Theorems~\ref{thm:thick} and~\ref{thm:equi} provide for the first time sufficiently precise
estimates in order to pursue such an investigation.
They in turn crucially rely on
the abstract control cost estimate in Theorem~\ref{thm:obs_and_control} as well as the spectral inequalities, proved in~\cite{NakicTTV-18,EgidiV-18} and spelled out in Theorems~\ref{thm:UCP_NTTV} and~\ref{thm:Lognivenko-Sereda} here.
Indeed, the fact that upper bounds on the control cost in earlier literature were
insufficient to carry out our intended program
was the main motivation for proving the upper bounds on the control cost in Section~\ref{sec:abstract}.

Let us emphasize that we are here spelling out the behavior of \emph{a scalar quantity of interest}, namely the control cost bound. A fuller understanding of the (de-)homogenization limit would be provided if one would study the convergence of the underlying operators and functions, which we do not carry out here, see also
Remark~\ref{rem:homogenization_conjecture}.

\subsection*{Homogenization}
We will first treat homogenization.
This means that the control set $S \subset \Omega$ becomes more and more evenly distributed over the space while keeping an overall lower bound on the relative density.
This corresponds to reducing local fluctuations in the density of the control set $S$.
In the case of $(G,\delta)$-equidistributed sets this corresponds to $G$ and $\delta$ simultaneously tending to $0$ while their ratio remains constant.
In the case of $(\rho,a)$-thick sets, this corresponds to $a$ tending to zero while $\rho$ is kept constant.
We refer to Figure~\ref{fig:homogenization} for an illustration in the case of $(G,\delta)$-equidistributed sets.
\begin{figure}[ht]
 \centering
 \begin{tikzpicture}[scale = 1.25]
 \begin{scope}[xshift = -4cm, scale = .75]
\pgfmathsetmacro{\d}{0.2};
\foreach \x in {1,...,4}{
                \foreach \y in {1,...,4}{
                        \pgfmathsetmacro{\xx}{rand *0.3+0.5};
                        \pgfmathsetmacro{\yy}{rand *0.3+0.5};
                                \draw[black!80,fill = black!40, opacity = 1.0] (\x+\xx,\y+\yy) circle (0.2cm);                                                       }};
\end{scope}
\draw (0,2) node {\Large $\Rightarrow$};
\begin{scope}[xshift = 0cm, yshift = 12, scale = .25]
\pgfmathsetmacro{\d}{0.2};
\foreach \x in {1,...,12}{
                \foreach \y in {1,...,12}{
                        \pgfmathsetmacro{\xx}{rand *0.3+0.5};
                        \pgfmathsetmacro{\yy}{rand *0.3+0.5};
                                \draw[black!80,fill = black!40, opacity = 1.0] (\x+\xx,\y+\yy) circle (0.2cm);                                                       }};
\end{scope}
\draw (3.75,2) node {\Large $\Rightarrow$};
 \begin{scope}[xshift = 4cm, yshift = 18, scale = .1]
\pgfmathsetmacro{\d}{0.2};
\foreach \x in {1,...,30}{
                \foreach \y in {1,...,30}{
                        \pgfmathsetmacro{\xx}{rand *0.3+0.5};
                        \pgfmathsetmacro{\yy}{rand *0.3+0.5};
                                \draw[black!80,fill = black!40, opacity = 1.0] (\x+\xx,\y+\yy) circle (0.2cm);                                                        }};
\end{scope}
\end{tikzpicture}
\caption{Illustration of homogenization in the case of $(G, \delta)$-equidistributed sets}
\label{fig:homogenization}
\end{figure}
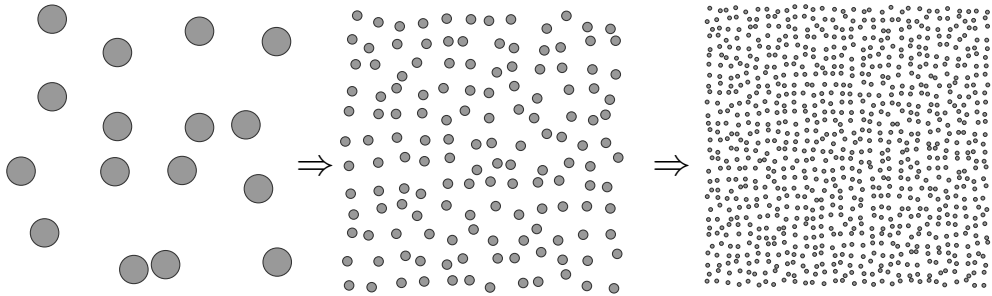
The first example shows that homogenization counteracts the exponential singularity of the control cost in the small time regime.
\begin{example}
 \label{ex:heat_pure}
 We consider the controlled heat equation~\eqref{eq:heat_equation} and assume that $V = 0$ and $\Omega = \RR^d$ or $\Omega = \Lambda_L$.
 We fix a density $\rho \in (0,1)$.
\par
 For every $a \in \RR^d$ with positive entries (and with $a_i \leq L$ if $\Omega = \Lambda_L$), and every $(\rho,a)$-thick set $S_a$, Theorem~\ref{thm:thick} implies that the system~\eqref{eq:heat_equation} with $S = S_a$ is null-controllable in every time $T > 0$ with
 \begin{equation}\label{eq:upper-bound-before}
\cost^2
 \leq
 \frac{C_1}{T} \rho^{-C_2d}
   \exp \left( \frac{C_3 \lvert a \rvert_1^2 \ln^2 (C_4^d / \rho)}{T} \right).
 \end{equation}
The exponential blow-up for small times appearing in \eqref{eq:upper-bound-before}
 is sharp in the following sense:  For the controlled heat equation \eqref{eq:heat_equation} with open $S\subset \Omega$, $S \neq \Omega$ it is known that the control cost grows at least proportionally to $\exp(\mathrm{const.}/T)$ as $T$ tends to zero, see e.g.\ \cite{FernandezZ-00,Miller-04}.
 Now, recall that homogenization means that we let the vector $a$ tend to zero while keeping $\rho$ constant.
 In the homogenization limit $a \to 0$, the upper bound in \eqref{eq:upper-bound-before} tends to
 \begin{equation}
  \label{eq:control_cost_in_limit_homogenization_C_positive}
  \frac{C_1}{T}
  \rho^{-C_2 d}.
 \end{equation}
 We see that in the limit $a \to 0$, the exponential singularity at $T = 0$, which is characteristic for controlled heat equation, disappears.
 We conclude that homogenization counteracts this $\exp(C/T)$ singularity.
Moreover, we note that the control cost is always bounded from below by the cost of a system with full control, i.e.\ with $S= \RR^d$ in \eqref{eq:heat_equation} or
$B=Id$ in \eqref{eq:abstract_control_system}, respectively.
By Theorem~\ref{thm:lower_bound_control_cost} we conclude that $C_T^2 \geq 1/T$ for all $a \in \RR^d$ with positive entries.
Hence, the term $1/T$ cannot vanish in the homogenization limit.
Note that the expression in \eqref{eq:control_cost_in_limit_homogenization_C_positive} coincides (up to constants) with the control cost of the system with full control and $\infspec = 0$ considered in Remark~\ref{remark:full_control}.


\end{example}
The next example shows that also in the presence of a potential, homogenization annihilates the exponential singularity at small times.
Furthermore, the effect of potentials on the control cost disappears in the homogenization regime -- up to the effect of the potential on $\infspec$.
\begin{example}
 \label{ex:heat_with_V}
 We consider the controlled heat equation with bounded and real-valued potential $V$ as in~\eqref{eq:heat_equation}.
 Note that $\infspec \geq - \lVert V \rVert_\infty$.
 For all $G,\delta > 0$ and all $(G,\delta)$-equidistributed sets $S_{G,\delta}$ such that $\Lambda_G \subset \Omega$, Theorem~\ref{thm:equi} implies that the system~\eqref{eq:heat_equation} with $S = S_{G,\delta}$ is null-controllable in every time $T > 0$ with
\begin{align*}
\cost^2
 &\leq \left( \frac{\delta}{G} \right)^{-C_2 (1 + G^{4/3} \lVert V - \infspec \rVert_\infty^{2/3})} \inf_{t \in (0,T]}
      \frac{C_1}{t}
      \exp \left( \frac{C_3 G^2 \ln^2 (\delta / G)}{t} - 2 \infspec t \right) & & \text{if $\infspec < 0$}, \\[2ex]
 \cost^2
 &\leq  \left( \frac{\delta}{G} \right)^{-C_2 (1 + G^{4/3} \lVert V \rVert_\infty^{2/3})} \frac{C_1}{T} \exp \left( \frac{C_3 G^2 \ln^2 (\delta / G)}{T} \right) & & \text{if $\infspec = 0$}, \\[2ex]
\cost^2 &\leq  \left( \frac{\delta}{G} \right)^{-C_2 (1 + G^{4/3} \lVert V - \infspec \rVert_\infty^{2/3})} \inf_{t \in [0,T)}
      \frac{C_1}{T-t}
      \exp \left( \frac{C_3 G^2 \ln^2 (\delta / G)}{T-t} - 2 \infspec t \right) & & \text{if $\infspec > 0$}.
\end{align*}
 Homogenization now means sending $G$ and $\delta$ to zero while keeping $\varrho:=\delta / G$ constant.
 In this limit, the upper bounds tend to
 \[
  \varrho^{-C_2}
  \inf_{t \in (0,T]}
  \frac{C_1}{t} \exp(- 2 \infspec t)
  ,
  \quad
  \varrho^{-C_2}
  \frac{C_1}{T}
  ,
  \quad
  \text{and}
  \quad
  \varrho^{-C_2}
  \inf_{t \in [0,T)}
  \frac{C_1}{T - t} \exp(- 2 \infspec t),
 \]
%
corresponding to the cases $\infspec > 0$, $\infspec = 0$, and $\infspec > 0$, where we used monotonicity in order to interchange limits and infima.
 It is straightforward to see that we recover the upper bounds from Table~\ref{rem:table} with $d_1 = 0$.
 This shows that in the homogenization limit, the limit of the upper bounds on the control cost
exhibits the same behavior as
the control cost of the system with full control from Remark~\ref{remark:full_control}.
 \par
 Moreover we see that the influence of the potential $V$ on the control cost is annihilated up to the effect of the potential on $\infspec$.
\end{example}
%

 \begin{remark} \label{rem:homogenization_conjecture}
  We emphasize that taking limits of the parameters $a$ and $\delta, G$ in Examples~\ref{ex:heat_pure} and~\ref{ex:heat_with_V} does neither require nor
  imply that there exist a limit of the operators $\mathbf{1}_{S_a}$ or $\mathbf{1}_{S_{G,\delta}}$, respectively.
  Furthermore, even if a limit of the operators existed, our inequality would not yield a limit of the control cost, but merely a bound on the limsup.

  However, there are situations where one could expect that in a homogenization limit, the optimal null-controls  as well as the corresponding solutions converge.
  For instance let us consider
  $\Omega =(0,1)^d$, $X = U = L^2 (\Omega)$, $A = H_\Omega = -\Delta$, and $B_n = \Eins_{S_n\cap \Omega}$ where
\[
S_n = \bigcup_{\stackrel{k_1, \ldots, k_d \in\{0,\ldots, 2^n-1\}}{k_1+ \ldots+ k_d \ \text{even}} }
\left( [0,2^{-n}]^d+ 2^{-n}(k_1, \ldots, k_d)^{\mathrm{T}} \right)
\]
as illustrated in Figure~\ref{fig:homogenized}.
\begin{figure}[ht] \centering \label{fig:homogenized}
  \begin{tikzpicture}[scale=3.5]

    \def\n{1} \def\f{1/2}
    \foreach \x/\y in {0/0,\f/\f}
    \filldraw[black!20] (\x,\y) rectangle (\x+\f,\y+\f);
    \draw[step=\f,black!50,very thin] (0,0) grid (1,1);
    \draw[very thick] (0,0) rectangle (1,1);

    \begin{scope}[xshift=1.2cm,scale=0.25]

     \foreach \y in {0,2}{
    \foreach \x in {0,2}{
        \filldraw[black!20] (\x,\y) rectangle (\x+1,\y+1) rectangle (2+\x,2+\y);}}
    \draw[step=1,black!50,very thin] (0,0) grid (4,4);
    \draw[very thick] (0,0) rectangle (4,4);
    \end{scope}

    \begin{scope}[xshift=2.4cm,scale=1/8]

     \foreach \y in {0,2,4,6}{
    \foreach \x in {0,2,4,6}{
        \filldraw[black!20] (\x,\y) rectangle (\x+1,\y+1) rectangle (2+\x,2+\y);}}
    \draw[step=1,black!50,very thin] (0,0) grid (8,8);
    \draw[very thick] (0,0) rectangle (8,8);
    \end{scope}
  \end{tikzpicture}
\caption{Illustration of the sets $B_1$, $B_2$, and $B_3$ in space dimension $d=2$}
\end{figure}
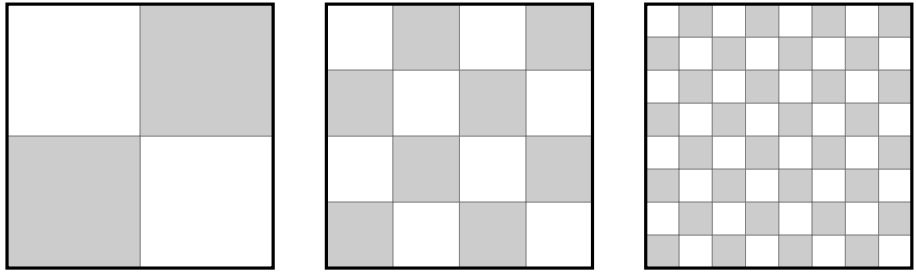
The corresponding heat equation
 \begin{equation*}
 \dot w - \Delta w = \Eins_{S_n} u,
 \quad
 w(0) = w_0 \in L^2(\Omega) ,
 \end{equation*}
has a unique null-control $u_n$ with minimal norm. Denote the corresponding solution by $w_n$.
 In this situation we expect that the sequences will converge to the minimal null-control $u$ and solution $w$, respectively, of the equation
 \begin{equation*}
 \dot w - \Delta w = \frac{1}{2} u,
 \quad
 w(0) = w_0 \in L^2(\Omega) .
 \end{equation*}
 Related, but different approximation scenarios have been studied in \cite{SeelmannV-19},
 however under the assumption that the sequence $(B_n)_n$ converges strongly to some limiting operator $B$.
 These results cannot be applied in the situation spelled out above, among others since we do not have $\Eins_{S_n} \to (1/2) \Eins_{\Omega} $ in the strong topology.
 A detailed study of these questions we leave for another occasion.
 \end{remark}

\subsection*{De-homogenization}

Now we treat the complementary regime which we call de-homogenization.
Let $\Omega = \RR^d$.
For $(G,\delta)$-equidistributed sets, de-homogenization means sending $G$ and $\delta$ simultaneously to $\infty$ while $G/\delta$ remains constant.
In the context of $(\rho,a)$-thick sets this means that all coordinates of $a$ tend to $\infty$ while $\rho$ is constant.
\par
Even though the overall relative density of the control set remains, de-homogenization allows for larger and larger void areas between the components of $S$ where no control is applied, see Figure~\ref{fig:de-homogenization}.
 It is unsurprising that for fixed time $T$, our upper bound on the control cost will in general increase since the diffusive nature of the heat equation makes it harder for components of the control set to exert control in larger and larger areas where there is no or only little control.
In particular, the considerations in~\cite{Miller-04}, see also Remark~\ref{rem:Miller}, show that for fixed time, the control cost estimate must tend to $\infty$ in the de-homogenization limit.
\begin{figure}[ht]
 \centering
    \begin{tikzpicture}[scale = -1.25]

 \begin{scope}[xshift = -4cm, scale = .75]
\pgfmathsetmacro{\d}{0.2};
\foreach \x in {1,...,4}{
                \foreach \y in {1,...,4}{
                        \pgfmathsetmacro{\xx}{rand *0.3+0.5};
                        \pgfmathsetmacro{\yy}{rand *0.3+0.5};
                                \draw[black!80,fill = black!40, opacity = 1.0] (\x+\xx,\y+\yy) circle (0.2cm);                                                       }};
\end{scope}

\draw (0,2) node {\Large $\Rightarrow$};

 \begin{scope}[xshift = 0cm, yshift = 12, scale = .25]
\pgfmathsetmacro{\d}{0.2};

\foreach \x in {1,...,12}{
                \foreach \y in {1,...,12}{
                        \pgfmathsetmacro{\xx}{rand *0.3+0.5};
                        \pgfmathsetmacro{\yy}{rand *0.3+0.5};
                                \draw[black!80,fill = black!40, opacity = 1.0] (\x+\xx,\y+\yy) circle (0.2cm);                                                       }};
\end{scope}
\draw (3.75,2) node {\Large $\Rightarrow$};
 \begin{scope}[xshift = 4cm, yshift = 18, scale = .1]
\pgfmathsetmacro{\d}{0.2};
\foreach \x in {1,...,30}{
                \foreach \y in {1,...,30}{
                        \pgfmathsetmacro{\xx}{rand *0.3+0.5};
                        \pgfmathsetmacro{\yy}{rand *0.3+0.5};
                                \draw[black!80,fill = black!40, opacity = 1.0] (\x+\xx,\y+\yy) circle (0.2cm);                                                        }};
\end{scope}
\end{tikzpicture}
 \caption{Illustration of de-homogenization in the case of $(G, \delta)$-equidistributed sets}
 \label{fig:de-homogenization}
\end{figure}
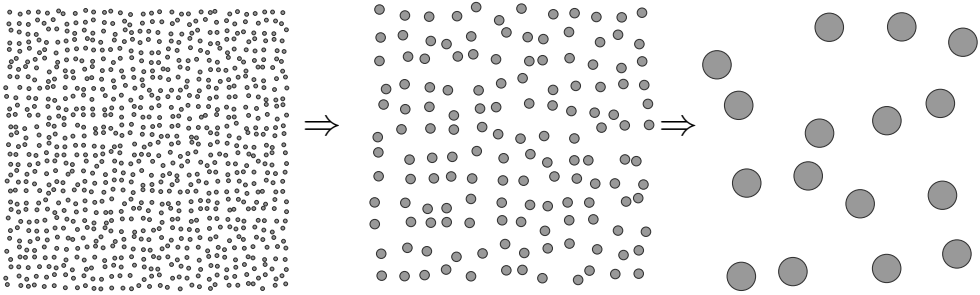
However, since $\cost$ is non-increasing in time, one can ask if it is possible to keep the control cost constant by simultaneously letting $T$ tend to $\infty$ in the de-homogenization regime.
The following example positively answers this question and provides a rate between the required time and the order of de-homogenization.
\begin{example}
 \label{ex:dehomogenization_V=0}
  We consider the controlled fractional heat equation~\eqref{eq:heat_equation_fractional} and assume that $\Omega = \RR^d$.
 We fix a density $\rho \in (0,1)$.
\par
 For every $T > 0$ and every $a \in \RR^d$ with positive entries, and every $(\rho,a)$-thick set $S_a$, Theorem~\ref{thm:thick_fractional} implies that the system~\eqref{eq:heat_equation_fractional} with $S = S_a$ is null-controllable in every time $T > 0$ with
 \[
  \cost^2
 \leq
 \frac{C_1}{T} \rho^{-C_2 d} \exp \left( \frac{C_3 \big( \lvert a \rvert_1 \ln( C_4^d / \rho) \big)^{\frac{2 \theta}{2 \theta - 1}}}{T^{\frac{1}{2 \theta - 1}}} \right).
\]
 There are three model parameters here: the parameters $\rho$ and $a$, describing the geometry of the control set, and the time $T$.
 Since we already chose $\rho$ and $a$, the only remaining way to accommodate for the increase in our upper bound when $a$ tends to infinity is to modify the remaining parameter $T$ by choosing
 \[
 T \sim \lvert a \rvert_1^{2 \theta}
 \]
 (due to the $1/T$ term in front of the exponential, we can even allow for a small logarithmic correction to this relation).
 We have recovered the relation between time and space derivatives $\dot w = -({- \Delta})^{\theta} w$ from the underlying fractional heat equation.
 This is an indication that our estimates on the control cost with respect to time and space parameters are close to being optimal.
\end{example}
\begin{example}
\label{ex:dehomogenization_V}
 We consider the controlled heat equation with bounded and real-valued potential $V$ as in~\eqref{eq:heat_equation}, and assume that $\Omega = \RR^d$ and $\infspec > 0$.
 For all $G,\delta > 0$ and all $(G,\delta)$-equidistributed sets $S_{G,\delta}$, Theorem~\ref{thm:equi} implies that the system~\eqref{eq:heat_equation} with $S_{G,\delta}$ is null-controllable in every time $T > 0$ with
\begin{align*}
 \cost^2
 &
 \leq
 \left( \frac{\delta}{G} \right)^{-C_2 (1 + G^{4/3} \lVert V - \infspec \rVert_\infty^{2/3})}
      \frac{2 C_1}{T}
      \exp \left( \frac{2 C_3 G^2 \ln^2 (\delta / G)}{T} - \infspec T \right) .
\end{align*}
As in Example~\ref{ex:dehomogenization_V=0}, the increase of the upper bound in the de-homogenization limit can be accommodated by choosing $T \sim G^{4/3}$.
Note that this exponent $G^{4/3}$ is related to the counterexample in~\cite{DuyckaertsZZ-08}.
However, there are special cases, such as a constant, positive potential $V$ in which choosing $T \sim G$ is sufficient to compensate the increase of the control cost in the de-homogenization regime.
\end{example}
In Example~\ref{ex:dehomogenization_V} we assumed $\infspec > 0$.
In the case where $\infspec \leq 0$ this argument does not work anymore.
Indeed, if $\infspec < 0$, we know from Theorem~\ref{thm:lower_bound_control_cost} and Corollary~\ref{cor:lower_bound_cost_control_cost} that $C_\infty = \inf_{T > 0} C_T > 0$ for every choice of $G$ and $\delta$.
It is an interesting question whether $C_\infty$ tends to infinity (and if yes at which rate) or remains bounded in the de-homogenization limit.
It seems that this is not accessible with the techniques presented in this paper.
\subsubsection*{Acknowledgments}
We are grateful to I.~Moyano for informing us about his joint work
with G.~Lebeau.
We also thank the anonymous referees for constructive comments to an earlier version of the manuscript.
I.~V.~would like to thank Christoph Schumacher and Albrecht Seelmann for helpful discussions.
The initial phase of this research was supported by travel grants within the binational Croatian-German PPP-Project \emph{The cost of controlling the heat flow in a multiscale setting}.
I.~N.~was supported in part by the Croatian Science Foundation under the projects 9345 and IP-2016-06-2468.
The second named author was supported in part by the European Research Council starting grant 639305 (SPECTRUM).
\newcommand{\etalchar}[1]{$^{#1}$}

\end{document}